\setlist{itemjoin ={,\enspace},itemjoin* = {\enspace}}
\newcommand*\bigcdot{\mathpalette\bigcdot@{.5}}
\newcommand*\bigcdot@[2]{\mathbin{\vcenter{\hbox{\scalebox{#2}{$\m@th#1\bullet$}}}}}
\newenvironment{myfont}{\fontfamily{phv}\selectfont}{\par}
\def\beg{\begin{myfont}}
\def\en{\end{myfont}}
\def\mc{\mathcal}
\def\mb{\mathbb}
\def\ni{\noindent}
\def\tb{\textbf}
\def\t{\text}
\def\la{\langle}
\def\ra{\rangle}
\def\mf{\mathfrak}
\def\vs{\vspace{.1in}}
\def\la{\langle}
\def\ra{\rangle}
\def\bp{\begin{proof}}
\def\ep{\end{proof}}
\def\be{\begin{enumerate}}
\def\ee{\end{enumerate}}
\def\adj{\sim}
\def\dist{\nsimeq}
\def\bi{\begin{itemize}}
\def\ei{\end{itemize}}
\def\i{\item[]}
\def\multiset#1#2{\ensuremath{\left(\kern-.3em\left(\genfrac{}{}{0pt}{}{#1}{#2}\right)\kern-.3em\right)}}
\newtheorem{Lem}{Lemma}[section]
\newtheorem{Thm}[Lem]{Theorem}
\newtheorem{Cor}[Lem]{Corollary}
\newtheorem{Prop}[Lem]{Proposition}
\theoremstyle{definition}
\newtheorem{Def}[Lem]{Definition}
\newtheorem{Rmk}[Lem]{Remark}
\title{Unified characterizations of minuscule Kac--Moody representations built from colored posets\thanks{This paper is part of a Ph.D. thesis \cite{Str} written under the supervision of R.A. Proctor at the University of North Carolina at Chapel Hill.}}
\author{Michael C. Strayer \\ University of North Carolina at Chapel Hill \\ Chapel Hill, NC 27599 U.S.A. \\ mcs80@live.unc.edu}
\date{June 17, 2019}                                           
\begin{document}

\maketitle 

\begin{spacing}{1.1}

    
    
    
    
    
    
    
    
    
    



\begin{abstract}
R.M. Green described structural properties that ``doubly infinite'' colored posets should possess so that they can be used to construct representations of most affine Kac--Moody algebras.  These representations are analogs of the minuscule representations of the semisimple Lie algebras, and his posets (``full heaps'') are analogs of the finite minuscule posets.  Here only simply laced Kac--Moody algebras are considered.  Working with their derived subalgebras, we provide a converse to Green's theorem.  Smaller collections of colored structural properties are also shown to be necessary and sufficient for such
poset-built 
representations 
to be produced for smaller subalgebras, especially the ``Borel derived'' subalgebra.  These developments lead to the formulation of unified definitions of finite and infinite colored minuscule and $d$-complete posets.  This paper launches a program that seeks to extend the notion of ``minuscule representation'' to Kac--Moody algebras, and to classify such representations.  
\end{abstract}



















\end{spacing}


\begin{spacing}{1.31}

\section{Introduction}\label{119}


For most affine Kac--Moody algebras, R.M. Green constructed \cite{Gre1} a small number of beautiful representations whose weight diagrams were unbounded above and below; we refer to such structures as being ``doubly infinite.''  In contrast, the familiar Category $\mc{O}$ representations have weight diagrams that are bounded above.  Green's representations and the doubly infinite posets from which they were built formed a central topic in his 2013 Cambridge tract \cite{Gre}.  He noted that these representations (with no highest weights) are analogous in many ways to the minuscule representations of semisimple Lie algebras (which are finite dimensional with highest weights).  For infinite dimensional Kac--Moody algebras, there are no highest weight representations that are analogous to the minuscule representations of semisimple Lie algebras.  This paper is the first in a series of papers wherein we hope to define an abstract notion of ``minuscule'' representation for arbitrary Kac--Moody algebras and then to classify them; see Section 9.3 of \cite{Str}.

The principal antecedents to this paper are \cite{Gre1} and \cite{Gre}, and important antecedents to those works were \cite{Ste} and \cite{Hag2}.  Those references were concerned with minuscule posets, $d$-complete posets, and full heaps.  In the next three paragraphs we indicate where these kinds of posets first appeared.  From our perspective these posets are used to build three out of the four kinds of representations appearing below in Table \ref{RepChar}.  The unified definitions that we develop for finite and infinite ``$\Gamma$-colored minuscule'' and ``$\Gamma$-colored $d$-complete'' posets may be of more interest than any one of our stated results by itself.  These new overarching definitions do not depend on knowing the cardinality of the poset \emph{a priori}; in addition they fill in the lower right slot of Table \ref{RepChar}. These definitions are presented in Section \ref{668}, wherein our main results are also summarized. That section has been written for immediate accessibility.

It has been known that the minuscule representations of the semisimple Lie algebras can be constructed combinatorially.  Consider one of the irreducible minuscule posets $P$ introduced by Proctor that were colored by him in Theorem 11 of \cite{Pro4} with the nodes of an associated Dynkin diagram $\Gamma$.  Form the set $\mc{FI}(P)$ of all of the ``splits'' $F/I$ of $P$, where $F$ is an upwardly closed subset (filter) of $P$ and $I$ is its complementary downwardly closed subset (ideal) of $P$.  These splits are the elements of a distributive lattice, called an irreducible minuscule lattice in \cite{Pro4}, whose covering edges are colored by the Dynkin nodes.  These edges can be used to define colored raising and lowering actions of the Chevalley generators of the Lie algebra $\mf{g}$ associated to $\Gamma$.  It can be seen that these actions specify a representation of $\mf{g}$ ``carried by'' $\mc{FI}(P)$ with highest weight corresponding to the split $\emptyset / P$.  Wildberger used \cite{Wil} this picture to specify the actions of a Chevalley basis for the Lie algebra in a minuscule representation; see Section 7.2 of \cite{Gre}.  R.G. Donnelly constructed many representations of semisimple Lie algebras using lattices of splits; see \cite{Don} and subsequent papers.
Proctor showed \cite{Pro4} that minuscule posets have the combinatorial Sperner property and (with R. Stanley's help) the combinatorial Gaussian property; see Section 11.3 of \cite{Gre}.  Minuscule posets are the structures on which the Littlewood--Richardson and cohomology calculations for minuscule varieties performed in \cite{BuSa} and its references are based.  Donnelly has recently given \cite{Don2} a new combinatorial characterization of the minuscule lattices of \cite{Pro4} while developing a new version of crystal graphs.




A generalization of minuscule posets appeared after D. Peterson introduced \cite{Car} a special kind of element in the Kac--Moody Weyl group specified by a Dynkin diagram $\Gamma$.  For an integral weight $\lambda$, he defined ``$\lambda$-minuscule'' elements $w$.  When $\Gamma$ is simply laced, Proctor showed \cite{Pro1} that the Bruhat intervals $[e,w]$ are distributive lattices.  When $\lambda$ is dominant, he then characterized the finite $\Gamma$-colored poset $P$ of join irreducibles of the lattice with some structural ``$d$-complete'' conditions.  The reduced decompositions of $w$ corresponded to the linear extensions of $P$.  Working in the context of Viennot's heap for $w$, Stembridge extended \cite{Ste} Proctor's work to include non-simply laced $\Gamma$.  He reformulated Proctor's notion of colored $d$-complete with some elegant coloring axioms, and referred to these posets as ``dominant minuscule heaps.''  More generally, Stembridge characterized the heaps for all $\lambda$-minuscule elements.  M. Hagiwara described \cite{Hag1,Hag2} the minuscule heaps for elements of the Kac--Moody Weyl groups specified by star shaped Dynkin diagrams and for the affine Weyl group of type $\tilde{A}_n$.  Stembridge's coloring axioms for the $d$-complete posets are not all self-dual.  Proctor showed that $d$-complete posets have unique jeu de taquin rectifications \cite{Pro5} and (with D. Peterson's help) the hook length property \cite{Pro3}.  These posets have been receiving increasing attention, as in \cite{KleRam}, and there is a bibliography for them in \cite{ProScop}.  When that study of the axioms for finite uncolored $d$-complete posets was written, it became apparent that the definition of ``$d$-complete'' could likely be extended to infinite locally finite posets.  However, it was unclear precisely what the most appropriate definition should be for such posets.  For further historical details, see Section 13 of \cite{Pro3}.

Adopting some of Stembridge's axioms, Green axiomatically defined \cite{Gre1} ``full heaps'' colored by Dynkin diagrams $\Gamma$.  These are doubly infinite locally finite colored posets $P$ in which the appearances of each color from $\Gamma$ are unbounded above and below.  He regarded these posets as being close companions to the finite minuscule posets.  All of his coloring axioms for full heaps were self-dual.  The ``extended slant lattices'' used by Hagiwara to describe the minuscule heaps for type $\tilde{A}_n$ were early appearances of full heaps.

Let $\mf{g}'$ be the derived Kac--Moody algebra for a Dynkin diagram $\Gamma$.  In our language Green essentially showed that if $P$ is a full heap colored by $\Gamma$, then the lattice of splits $\mathcal{FI}(P)$ carries a representation of $\mathfrak{g}'$.
\ni This result first appeared in Theorem 3.1 of \cite{Gre1} and was restated as Theorem 4.1.6(i) of \cite{Gre}.  
We compare Green's result to the ``sufficient'' direction of our main result Theorem \ref{672}(b) in Remark \ref{GreenComparison}.  For reference, there we state Green's result as Theorem \ref{4.1.6(i)}.
Throughout this paper we restrict our attention to simply laced $\Gamma$.  For such diagrams $\Gamma$, our foremost new main result (the ``necessary'' direction of Theorem \ref{672}(b)) provides a converse to Theorem 
\ref{4.1.6(i)}
that includes finite dimensional representations as well as infinite dimensional representations.  To state this converse, we formulate a notion of ``$P$-minuscule'' representation 
(Definition \ref{MinReps}):
This is a representation of $\mf{g}'$ carried by $\mc{FI}(P)$ that ``looks like'' a minuscule representation of a semisimple Lie algebra.  At the same time, for simply laced $\Gamma$, the ``sufficient'' direction of Theorem \ref{672}(b) is a version of Theorem 
\ref{4.1.6(i)}
that now includes posets of unknown (finite or ``mixed'') cardinality.  
Theorem \ref{672}(b) summarizes Theorem \ref{653}.  

Leading up to Theorem \ref{653} are several intermediate results; as more and more coloring properties are assumed for the poset $P$, the representations constructed have stronger and stronger algebraic properties.  Most often these algebraic properties are the satisfaction of some of the defining relations for $\mf{g}'$.  Each of these collections of coloring properties is necessary as well as sufficient for the collection of algebraic properties at hand.  This development clarifies which collections of the coloring properties assumed for full heaps in Theorem 
\ref{4.1.6(i)}
correspond to which algebraic aspects of the representations.  It also facilitates comparison with the collections of coloring properties considered by Stembridge in his parallel study of the reduced decompositions of a $\lambda$-minuscule Kac--Moody Weyl group element $w$.

By omitting a ``down-only'' coloring property required in Theorem \ref{653}, earlier in Theorem \ref{614} we obtain a similar characterization of colored poset constructions of representations of just the ``Borel derived'' (Section \ref{251}) subalgebra $\mf{b}'_+$.  When $P$ is finite, these representations arise as the restrictions to $\mf{b}'_+$ of the Demazure $\mf{b}_+$-modules for the dominant $\lambda$-minuscule $w$.  (Even when $P$ is finite, the Kac--Moody algebra $\mf{g}$ at hand can be infinite dimensional.)  For this ``up only'' analog to Theorem \ref{653}, we introduce two new definitions.  We formulate the notion of ``upper $P$-minuscule'' representation of $\mf{b}'_+$ (Definition \ref{MinReps}).  For the lower right corner of Table \ref{RepChar}, we formulate a notion of colored $d$-complete that works for infinite locally finite colored posets.  So this desire to obtain a theorem for $\mf{b}'_+$ analogous to Theorem \ref{653} led to a precise definition for infinite colored $d$-complete posets.  Theorem \ref{672}(a) summarizes Theorem \ref{614}.

After introducing full heaps, Green defined \cite{Gre} the notion of ``principal subheap'' for full heaps colored by affine $\Gamma$.  These are finite colored posets.  He showed that the principal subheaps of such a full heap are isomorphic to each other.  Then he proved that the possible principal subheaps are exactly the pre-existing finite colored minuscule posets.  In \cite[\S 8.4]{Str} we showed that Green's full heaps are exactly our infinite $\Gamma$-colored minuscule posets and that Green's principal subheaps (the pre-existing finite colored minuscule posets) are exactly our finite $\Gamma$-colored minuscule posets.  The relationship between the finite $\Gamma$-colored minuscule posets and the infinite $\Gamma$-colored minuscule posets is entirely different here than in \cite{Gre}.  

\vspace{.1in}

\begin{table}[h]
    \centering
    \begin{tabular}{|l||c|c|}
        \hline 
         \textbf{Representation} & \textbf{Finite dimensional} & \textbf{Infinite dimensional} \\
         \hline 
         \hline
        \textbf{$\boldsymbol{P}$-minuscule} & Finite $\Gamma$-colored minuscule posets & Infinite $\Gamma$-colored minuscule posets \\
        \hline 
        \textbf{Upper $\boldsymbol{P}$-minuscule} & Finite $\Gamma$-colored $d$-complete posets & Infinite $\Gamma$-colored $d$-complete posets \\
        \hline
    \end{tabular}
    \caption{The characterizations of $P$-minuscule and upper $P$-minuscule representations}
    \label{RepChar}
\end{table}

To give an overview of this paper, for each simply laced $\Gamma$ we regard Table \ref{RepChar} as a table of representation characterization problems.  (Extensions of our results that also handle the non-simply laced case are presented in \cite{Str}.)  The rows of the table respectively pose existence problems for ``$P$-minuscule'' representations of $\mf{g}'$ and ``upper $P$-minuscule'' representations of $\mf{b}'_+$.  The posets $P$ that are shown to solve these problems are respectively the $\Gamma$-colored minuscule posets and the $\Gamma$-colored $d$-complete posets.  The two columns of the table pertain to the cardinality of the poset $P$.  Earlier work \cite{Pro1,Ste} has either been restricted to finite posets $P$ or has handled \cite{Gre} finite and infinite posets separately.  Using \emph{a posteriori} knowledge of the cardinality of $P$, the columns of the table indicate finite $P$ or infinite $P$; in this paper we do not assume \emph{a priori} (not even in the proofs) that the poset $P$ at hand is known to be finite or infinite.  Our answers to these four existence problems are summarized in Theorem \ref{672}.  Stembridge's dominant minuscule heaps (i.e. Proctor's colored $d$-complete posets) inhabit the lower left corner of Table \ref{RepChar} and Green's full heaps inhabit the upper right corner.  The infinite $\Gamma$-colored $d$-complete posets that inhabit the lower right corner are new.  The original colored minuscule posets inhabit the upper left corner. 

It is the introduction of ``frontier census'' coloring properties (Section \ref{522}) for the poset $P$ that enables us to provide definitions of ``$\Gamma$-colored minuscule'' and ``$\Gamma$-colored $d$-complete'' that are uniform across Table \ref{RepChar}.  Given an element $y \in P$ that is an extreme element of $P$ of color $b \in \Gamma$, these properties limit the number of elements that lie beyond $y$ in $P$ that have colors that are adjacent to $b$ in $\Gamma$.

As is true for the finite minuscule case, when $P$ is a full heap colored by $\Gamma$ of untwisted affine type the structure $\mc{FI}(P)$ can be viewed as a crystal.  Then $\mc{FI}(P)$ can be used to give a representation of the corresponding quantum affine algebra, as is described in Section 8 of Green's paper \cite{Gre1}.  Green also used $\mc{FI}(P)$ to construct representations of most of the affine Weyl groups in \cite{Gre2}. 

Proctor classified \cite{Pro2,Pro1} the finite colored $d$-complete posets for simply laced $\Gamma$.  Stembridge extended \cite{Ste} this classification to non-simply laced $\Gamma$.  Green \cite{Gre} and McGregor-Dorsey \cite{McD} classified the full heaps.  
In \cite[Ch. 8]{Str} we classified the infinite colored $d$-complete posets
and listed all of the posets that are organized by Table \ref{RepChar} above.  
These results will also appear in \cite{ProStr}.
Having the new ``necessary'' direction of Theorem \ref{672}(b) available will enable us to also classify the $P$-minuscule representations.  This will be done by using that direction to combine two of the main results of \cite{Gre} and \cite{McD}, namely Theorem
\ref{4.1.6(i)}
and their classification of full heaps.  This classification of $P$-minuscule representations will be a step in our minuscule Kac--Moody program: In an anticipated third paper we plan to present a definition of ``abstract minuscule'' representation for a Kac--Moody algebra.  This definition will not refer to a poset that has been supplied \emph{a priori}.  Given such a representation, we believe we will be able to construct a poset $P$ so that the given representation can be viewed as a $P$-minuscule representation.

After definitions are given in Sections \ref{236}-\ref{284}, this paper has three parts.  The first part, Sections \ref{317} and \ref{361}, concerns representations of the Borel derived subalgebra $\mf{b}'_+$ that are carried by the lattice of splits $\mc{FI}(P)$.  Theorem \ref{351} states that the possession of three of our earliest coloring properties by the poset $P$ is equivalent to the existence of a representation of the smaller subalgebra $\mf{n}_+ \subset \mf{b}'_+$ that is carried by $\mc{FI}(P)$.  Section \ref{361} studies the extension of this representation from $\mf{n}_+$ to the Borel derived subalgebra $\mf{b}'_+$ by specifying the actions of the simple coroots (which form a basis of the Cartan derived subalgebra $\mf{h}'$).  There is some freedom available for such an extension; the weight functions we introduce are accounting tools to keep track of the coroot actions.  In the second part, Sections \ref{414}-\ref{522}, we introduce a particular nice weight function.  The prototypical minuscule representations of semisimple Lie algebras have weights along their ``$\mf{sl}_2$ strings'' that are composed of eigenvalues from $\{-1,0,+1\}$ for the simple coroot actions.  Our preferred weight function is defined in Definition \ref{Mu}.  In Proposition \ref{458} we begin to obtain simple coroot actions with $\{-1,0,+1\}$ values when three coloring properties for $P$ are present.  We introduce the frontier census coloring properties in Section \ref{522}.  Sections \ref{609}-\ref{668} form our third part.  After obtaining our main results in Sections \ref{609} and \ref{631}, we summarize them in Section \ref{668} by presenting our new definitions of $\Gamma$-colored $d$-complete and $\Gamma$-colored minuscule posets.  We briefly describe their classifications in Remark \ref{ClassifySummary}.


\section{Combinatorial definitions}\label{236}

\begin{figure}[p!]
\centering 
    \begin{tikzpicture}[scale=1]
        \node (f) at (-4,1){$f$};
        \node (e) at (-4,2){$e$};
        \node (g) at (-4,3){$g$};
        \node (a) at (-5,3){$a$};
        \node (b) at (-6,3){$b$};
        \node (c) at (-3,3){$c$};
        \node (d) at (-2,3){$d$};
        
        \draw (b) -- (a) -- (g) -- (c) -- (d);
        \draw (g) -- (e) -- (f);
    
        \node (y) at (-6,14){$y_d$};
        \node (z) at (-4,14){$z_g$};
        \node (w) at (-5,13){$w_c$};
        \node (x) at (-3,13){$x_e$};
        \node (u) at (-4,12){$u_g$};
        \node (v) at (-2,12){$v_f$};
        \node (s) at (-5,11){$s_a$};
        \node (t) at (-3,11){$t_e$};
        \node (q) at (-6,10){$q_b$};
        \node (r) at (-4,10){$r_g$};
        \node (o) at (-5,9){$o_a$};
        \node (p) at (-3,9){$p_c$};
        \node (m) at (-4,8){$m_g$};
        \node (n) at (-2,8){$n_d$};
        \node (k) at (-5,7){$k_e$};
        \node (l) at (-3,7){$l_c$};
        \node (i) at (-6,6){$i_f$};
        \node (j) at (-4,6){$j_g$};
        
        \draw (q) -- (o) -- (r) -- (p) -- (n) -- (l) -- (m) -- (o);
        \draw (i) -- (k) -- (j) -- (l);
        \draw (k) -- (m) -- (p);
        \draw[dashed] (i) -- (-5,5) -- (j) -- (-3,5);
        \draw (y) -- (w) -- (z) -- (x) -- (v) -- (t) -- (r) -- (s) -- (q);
        \draw (w) -- (u) -- (s);
        \draw (x) -- (u) -- (t);
        \draw[dashed] (y) -- (-5,15) -- (z) -- (-3,15);
        
        \node (N) [draw] at (6,2){$n$};
        \node (J)[draw] at (4,0){$j$};
        \node (I) at (2,0){};
        
        \draw[dashed] (J) -- (3,-1);
        
        \node at (.3,13.35){\footnotesize $d$};
        \node at (.3,14.65){\footnotesize $f$};
        \node at (1.3,15.65){\footnotesize $e$};
        \node at (2.3,16.65){\footnotesize $g$};
        \node at (1.3,12.35){\footnotesize $c$};
        \node at (2.3,11.35){\footnotesize $g$};
        \node at (2.3,10.65){\footnotesize $e$};
        \node at (3.7,15.35){\footnotesize $g$};
        \node at (3.7,14.65){\footnotesize $c$};
        \node at (3.7,13.35){\footnotesize $e$};
        \node at (3.7,12.65){\footnotesize $g$};
        \node at (4.7,11.65){\footnotesize $a$};
        \node at (5.7,10.65){\footnotesize $b$};
        \node at (5.7,9.35){\footnotesize $f$};
        \node at (4.7,8.35){\footnotesize $e$};
        \node at (3.7,7.35){\footnotesize $g$};
        \node at (3.7,6.65){\footnotesize $a$};
        \node at (3.7,5.35){\footnotesize $c$};
        \node at (3.7,4.65){\footnotesize $g$};
        \node at (4.7,3.65){\footnotesize $e$};
        \node at (5.7,2.65){\footnotesize $f$};
        \node at (5.7,1.35){\footnotesize $d$};
        \node at (4.7,0.35){\footnotesize $c$};
        \node at (2.3,1.35){\footnotesize $e$};
        \node at (2.3,2.65){\footnotesize $c$};
        \node at (2.3,3.35){\footnotesize $g$};
        \node at (1.3,4.35){\footnotesize $a$};
        \node at (.3,5.35){\footnotesize $b$};
        \node at (.3,6.65){\footnotesize $d$};
        \node at (1.3,7.65){\footnotesize $c$};
        \node at (2.3,8.65){\footnotesize $g$};
        \node at (2.3,9.35){\footnotesize $a$};
        
        \node (YZ)[draw] at (3,17){$y,z$};
        \node (YX)[draw] at (2,16){$y,x$};
        \node (YV)[draw] at (1,15){$y,v$};
        \node (Y)[draw] at (0,14){$y$};
        \node (W)[draw] at (1,13){$w$};
        \node (U)[draw] at (2,12){$u$};
        \node (ST)[draw] at (3,11){$s,t$};
        \node (S)[draw] at (2,10){$s$};
        \node (Z)[draw] at (4,16){$z$};
        \node (WX)[draw] at (3,15){$w,x$};
        \node (WV)[draw] at (2,14){$w,v$};
        \node (X)[draw] at (4,14){$x$};
        \node (UV)[draw] at (3,13){$u,v$};
        \node (SV)[draw] at (4,12){$s,v$};
        \node (QV)[draw] at (5,11){$q,v$};
        \node (QT)[draw] at (4,10){$q,t$};
        \node (V)[draw] at (6,10){$v$};
        \node (T)[draw] at (5,9){$t$};
        \node (QR)[draw] at (3,9){$q,r$};
        \node (R)[draw] at (4,8){$r$};
        \node (QP)[draw] at (2,8){$q,p$};
        \node (OP)[draw] at (3,7){$o,p$};
        \node (QN)[draw] at (1,7){$q,n$};
        \node (Q)[draw] at (0,6){$q$};
        \node (ON)[draw] at (2,6){$o,n$};
        \node (P)[draw] at (4,6){$p$};
        \node (O)[draw] at (1,5){$o$};
        \node (MN)[draw] at (3,5){$m,n$};
        \node (M)[draw] at (2,4){$m$};
        \node (KN)[draw] at (4,4){$k,n$};
        \node (KL)[draw] at (3,3){$k,l$};
        \node (IN)[draw] at (5,3){$i,n$};
        \node (K)[draw] at (2,2){$k$};
        \node (IL)[draw] at (4,2){$i,l$};
        \node (IJ)[draw] at (3,1){$i,j$};
        \node (L)[draw] at (5,1){$l$};
        
        \draw (S) -- (QR) -- (QT) -- (T) -- (V);
        \draw (T) -- (R) -- (QR) -- (QP) -- (QN) -- (Q) -- (O) -- (ON) -- (OP) -- (R);
        \draw (QP) -- (OP) -- (P) -- (MN) -- (ON) -- (QN);
        \draw (O) -- (M) -- (MN) -- (KN) -- (IN) -- (N) -- (L) -- (IL) -- (KL) -- (M);
        \draw (IJ) -- (J) -- (L);
        \draw[dashed] (2,0) -- (IJ);
        \draw (KN) -- (KL) -- (K) -- (IJ) -- (IL) -- (IN);
        \draw (YZ) -- (YX) -- (YV) -- (Y) -- (W) -- (U) -- (ST) -- (S);
        \draw (YZ) -- (Z) -- (WX) -- (WV) -- (UV) -- (X) -- (WX);
        \draw (YX) -- (WX);
        \draw (YV) -- (WV) -- (W);
        \draw (U) -- (UV) -- (SV) -- (ST) -- (QT) -- (QV) -- (V);
        \draw (SV) -- (QV);
        \draw[dashed] (2,18) -- (YZ) -- (4,18);
        \draw[dashed] (Z) -- (5,17);
    \end{tikzpicture}
    \vs 
    \caption{Clockwise from bottom left: Simple graph $\Gamma$ of affine type $\tilde{E}_6$, full heap poset $P$ colored by $\Gamma$, and edge-colored lattice of filter-ideal splits $\mc{FI}(P)$.  Subscripts in $P$ indicate element colors.  Splits $(F,I)$ are described by the maximal elements of the ideal $I$.  Opposite diamond edges in $\mc{FI}(P)$ have the same color.}
    \label{134}
\end{figure}


Fix a partially ordered set $P$ throughout.  Letters such as $z,y,x,\dots$ are used to denote elements of $P$.  We follow Stanley \cite{Sta} for the following terminology: comparable elements, covering relations and the Hasse diagram, closed and open intervals, connected posets, and direct sums of posets.  We assume each $P$ is \emph{locally finite}; this means that all of its closed intervals are finite.  We write $x \to y$ to indicate that $x$ is covered by $y$.  We say $x$ and $y$ are \emph{neighbors} in $P$ if $x \to y$ or $y \to x$.  A subset $F \subseteq P$ is a \emph{filter} of $P$ if whenever $x \in F$ and $y \ge x$, we also have $y \in F$.  Dually, a subset $I \subseteq P$ is an \emph{ideal} of $P$ if whenever $x \in I$ and $y \le x$, we also have $y \in I$.  For each filter $F$ of $P$ there is a corresponding ideal $I := P - F$.  Let $\mc{FI}(P)$ be the set of all ordered pairs $(F,I)$ such that $F$ is a filter of $P$ and $I$ is its corresponding ideal: These are the \emph{splits} of $P$.  The set $\mc{FI}(P)$ becomes a distributive lattice when it is ordered by inclusion of the ideals within the splits.  Figures \ref{134} and \ref{289} display posets $P$ and their lattices $\mc{FI}(P)$ of splits.  We write $(F+x+\dots,I-x-\dots)$ instead of $(F \cup \{x,\dots\},I-\{x,\dots\})$ and $(F-x-\dots,I+x+\dots)$ instead of $(F-\{x,\dots\},I \cup \{x,\dots\})$.  Each edge in the Hasse diagram of $\mc{FI}(P)$ can be viewed as transferring a minimal element of some split's filter to its ideal, where it becomes a maximal element: In $\mc{FI}(P)$ one has $(F,I) \to (F-x,I+x)$ when $x$ is a minimal element of $F$.  Dually, one has $(F+y, I-y) \to (F,I)$ when $y$ is a maximal element of $I$.

Fix a finite simple graph $\Gamma$ throughout, meaning that no loops or multiple edges are allowed.  We will use the symbol $\Gamma$ to also denote its set of vertices.  Letters such as $a,b,c,\dots$ are used to denote vertices of $\Gamma$, which we call \emph{colors}.  A \emph{$\Gamma$-set} is any set whose elements are indexed by the colors in $\Gamma$.
Let $a,b \in \Gamma$.  If $\{a,b\}$ is an edge of $\Gamma$, we write $a \adj b$ and say $a$ and $b$ are \emph{adjacent}.  If $a \ne b$ and $\{a,b\}$ is not an edge of $\Gamma$, we write $a \dist b$ and say $a$ and $b$ are \emph{distant}.  Let $\delta_{ab}$ be the Kronecker delta.  
Define $\theta_{ab} := 2\delta_{ab} - \sum_{c \adj b} \delta_{ac}$.  We have $\theta_{ab} = 2$ if $a = b$ and $\theta_{ab} = -1$ if $a \adj b$ and $\theta_{ab} = 0$ if $a \dist b$.

We equip $P$ with a surjective \emph{coloring function} $\kappa : P \to \Gamma$, and we say that $P$ is a \emph{$\Gamma$-colored} poset.  See Figures \ref{134} and \ref{289}.   For each $a \in \Gamma$, let $P_a := \kappa^{-1}(a)$ be the subset of all elements in $P$ of color $a$.  The coloring of $P$ induces an edge coloring of the Hasse diagram of $\mc{FI}(P)$: The color of an edge is given by the color of the element transferred along that edge.

Various poset coloring properties will be precisely defined as needed; Table \ref{245} indexes these forthcoming definitions.  The poset displayed in Figure \ref{134} satisfies all of these properties, and the poset displayed in Figure \ref{289} satisfies all of them except Mn1LA.


\begin{table}[h!]
    \centering
    \begin{tabular}{|l|l|l|}
        \hline 
        \tb{Property:} \  & \ \tb{Abbreviated definition:} & \ \tb{Location:} \\
        \hline 
        \hline 
        EC & \ elements with Equal colors are Comparable & \ Proposition \ref{323} \  \\
        \hline 
        ND & \ Neighbors have Different colors & \ Proposition \ref{323} \\
        \hline
        NA & \ Neighbors have Adjacent colors & \ Lemma \ref{341} \\
        \hline 
        I3ND & \ Interval of 3 Neighbors has 3 Different colors & \ Lemma \ref{346} \\
        \hline 
        AC & \ elements with Adjacent colors are Comparable & \ Proposition \ref{423} \\
        \hline 
        I2A & \ consecutive color Intervals contain 2 Adjacent colors \ & \ Proposition \ref{423} \\
        \hline 
        Mx$k$GA & \ color Max has $\le k$ elts Greater than it w/ Adjacent colors & \ Section \ref{522} \\
        \hline 
        Mn$k$LA & \ color Min has $\le k$ elts Less than it w/ Adjacent colors & \ Section \ref{522} \\
        \hline 
    \end{tabular}
    \caption{Coloring property abbreviations and locations of definitions}
    \label{245}
\end{table}

\section{Algebraic definitions}\label{251} 

We regard the graph $\Gamma$ as being a \emph{simply laced Dynkin diagram}.  Once $\Gamma$ has been given a total ordering, the associated generalized Cartan matrix is $[\theta_{ab}]_{a,b \in \Gamma}$.  
The entries $\theta_{ab}$ of this matrix are the integers defined in the previous section; they agree with the entries of the generalized Cartan matrix for $\Gamma$ in \cite{Kac}.  
The \emph{Kac--Moody algebra} $\mf{g}$ with \emph{Cartan subalgebra} $\mf{h}$ and subalgebras $\mf{n}_+$ and $\mf{n}_-$ are defined in \cite{Kac} for a given Dynkin diagram $\Gamma$.  The \emph{positive} and \emph{negative Borel subalgebras} are respectively $\mf{b}_\pm := \mf{h} + \mf{n}_\pm$.  We refer to the subalgebras of the \emph{derived subalgebra} $\mf{g}' := [\mf{g},\mf{g}]$ formed by intersections of $\mf{h}$ and $\mf{b}_\pm$ with $\mf{g}'$ as the \emph{Cartan derived subalgebra} $\mf{h}'$ and the \emph{Borel derived subalgebras} $\mf{b}'_\pm$.  The algebras $\mf{n}_{\pm}$ and the derived algebras are generated \cite[\S 9.11]{Kac} by subsets of the symbols $\{x_a,y_a,h_a\}_{a \in \Gamma}$ subject to some defining relations.  When $\Gamma$ is simply laced, the relations are certain unions (specified below) of the following sets:
\renewcommand{\arraystretch}{0.9}
\begin{longtable}{rrl}
(XX): & (i) & $[x_b,x_a] = 0$ if $a,b \in \Gamma$ with $a \dist b$, \\ 
 & (ii) & $[x_a,[x_a,x_b]] = 0$ for all $a,b \in \Gamma$, \\[.1in]
 (YY): & (i) & $[y_b,y_a] = 0$ if $a,b \in \Gamma$ with $a \dist b$, \\
 & (ii) & $[y_a,[y_a,y_b]] = 0$ for all $a,b \in \Gamma$, \\[.1in]
 (HH): & (i) & $[h_b,h_a] = 0$ for all $a,b \in \Gamma$, \\[.1in]
 (HX): & (i) & $[h_a,x_a] = 2x_a$ for all $a \in \Gamma$, \\
  & (ii) & $[h_b,x_a] = -x_a$ if $a,b \in \Gamma$ with $a \adj b$, \\
  & (iii) & $[h_b,x_a] = 0$ if $a,b \in \Gamma$ with $a \dist b$, \\[.1in]
(HY): & (i) & $[h_a,y_a] = -2y_a$ for all $a \in \Gamma$, \\
 & (ii) & $[h_b,y_a] = y_a$ if $a,b \in \Gamma$ with $a \adj b$, \\
 & (iii) & $[h_b,y_a] = 0$ if $a,b \in \Gamma$ with $a \dist b$, \\[.1in]
(XY): & (i) &  $[x_a,y_a] = h_a$ for all $a \in \Gamma$, \\
 & (ii) & $[x_b,y_a] = 0$ if $a,b \in \Gamma$ with $a \ne b$.
\end{longtable}
\ni The relations HX and HY can be condensed to $[h_b,x_a] = \theta_{ab} x_a$ and $[h_b,y_a] = -\theta_{ab} y_a$ for $a,b \in \Gamma$.  The algebra $\mf{h}'$ is the Lie algebra generated by $\{h_a\}_{a \in \Gamma}$ subject to the relation HH, and so it is abelian.  The algebra $\mf{n}_+$ (respectively $\mf{n}_-$) is the Lie algebra generated by $\{x_a\}_{a \in \Gamma}$ (respectively $\{y_a\}_{a \in \Gamma}$) subject to the relations XX (respectively YY).  The algebra $\mf{b}'_+$ (respectively $\mf{b}'_-$) is the Lie algebra generated by $\{x_a,h_a\}_{a \in \Gamma}$ (respectively $\{y_a,h_a\}_{a \in \Gamma}$) subject to the relations XX, HH, and HX (respectively YY, HH, and HY).  We note that $\mf{b}'_+ = \mf{h}' + \mf{n}_+$ and $\mf{b}'_- = \mf{h}' + \mf{n}_-$.  The algebra $\mf{g}'$ is the Lie algebra generated by $\{x_a,y_a,h_a\}_{a \in \Gamma}$ subject to all of the relations above.  When $\mf{g}$ is finite dimensional (and consequently semisimple), the algebras $\mf{h}$, $\mf{b}_\pm$, and $\mf{g}$ are respectively equal to $\mathfrak{h}', \mathfrak{b}'_\pm$, and $\mathfrak{g}'$.  

Let $\mc{V}$ be any vector space.  We say an operator $T : \mc{V} \to \mc{V}$ is \emph{square nilpotent} if $T^2 = 0$.  Consider actions on $\mc{V}$ of the generators $x_a$, $y_a$, and $h_a$ that are respectively given by operators $X_a$, $Y_a$, and $H_a$ in $\t{End}(\mc{V})$ for all $a \in \Gamma$.  If the $X_a$ (respectively $Y_a$) are square nilpotent for all $a \in \Gamma$, we say their actions are collectively \emph{$X$-square} (respectively \emph{$Y$-square}) \emph{nilpotent}.  An \emph{$\mf{h}'$-weight basis} of $\mc{V}$ is a basis $\mc{B}$ of $\mc{V}$ that simultaneously diagonalizes the operators $\{H_a\}_{a \in \Gamma}$.  A \emph{weight function} on $\mc{B}$ is a $\Gamma$-set of $\mb{C}$-valued functions on $\mc{B}$. Here the \emph{$\mf{h}'$-weight} of $\{H_a\}_{a \in \Gamma}$ is the weight function $\{\xi_a\}_{a \in \Gamma}$ satisfying $H_a.v = \xi_a(v).v$ for every $a \in \Gamma$ and every $v \in \mc{B}$.  The \emph{eigenvalue set} for $\mc{B}$ is $\mc{E}_{\mf{h}'} := \{\xi_a(v) \ | \ a \in \Gamma, v \in \mc{B}\}$.

\section{Representations of Lie algebras built from colored posets} \label{284}

Our standard context consists of the fixed locally finite poset $P$ that is colored by $\kappa$ with the fixed finite simple graph $\Gamma$, and that has a lattice of splits $\mc{FI}(P)$.  All of the results in this paper are stated in this context.  Let $\langle \mathcal{FI}(P) \rangle$ be the free complex vector space on $\mc{FI}(P)$.  In this section set $V := \langle \mathcal{FI}(P) \rangle$.  For each split $(F,I)$, denote the corresponding vector in $V$ by $\la F,I \ra$.  
We now define our raising and lowering operators on $V$.  Whenever we create these operators, we will be assuming the property EC which is defined below in Proposition \ref{323}.  This property implies that the defining sums for these operators will be either a single term or zero (and hence finite).  See Remark \ref{ECcomment}.

\begin{Def}\label{RaisingLoweringOperators}
Let $a \in \Gamma$.  For every $(F,I) \in \mathcal{FI}(P)$, define $X_a.\la F,I \ra := \sum \la F-x, I+x \ra$; here the sum is taken over all elements $x$ of color $a$ that are minimal in $F$.  Dually, define $Y_a.\la F,I \ra := \sum \la F+y,I-y \ra$; here the sum is taken over all elements $y$ of color $a$ maximal in $I$.  Linearly extending $X_a$ and $Y_a$ to all of $V$ respectively gives the \emph{color raising operator} and \emph{color lowering operator} for the color $a$.
\end{Def} 
\ni For $a \in \Gamma$, the action of $X_a$ (respectively $Y_a$) on a basis vector $\la F,I \ra$ can be viewed as summing over all ways to move up (respectively down) in $\mc{FI}(P)$ from $(F,I)$ by an edge colored $a$.  We extend our standard context to include $V$ (usually implicitly) and the operators $\{X_a,Y_a\}_{a \in \Gamma}$.  We use these operators to build representations from colored posets.

\begin{figure}[t!]
    \centering
    \begin{tikzpicture}[scale=1.7]
        \node(b) at (0,0){$b$};
        \node(c) at (0,1){$c$};
        \node(a) at (1,.5){$a$};
        \node(d) at (2,.5){$d$};
        \node(g) at (3,.5){$g$};
        
        \draw (b) -- (a) -- (c);
        \draw (a) -- (d) -- (g);
        
        \node(x) at (0,4){$x_b$};
        \node(u) at (1,3){$u_a$};
        \node(v) at (1,5){$v_a$};
        \node(y) at (2,4){$y_c$};
        \node(z) at (2,6){$z_d$};
        \node(q) at (3,5){$q_g$};
        
        \draw (q) -- (z) -- (v) -- (x) -- (u) -- (u) -- (y) -- (v);
        
        \node(P) at (5,0){$(P,\emptyset)$};
        \node(U)[draw] at (4.5,1){$u$};
        \node(X)[draw] at (3.5,2){$x$};
        \node(Y)[draw] at (5.5,2){$y$};
        \node(XY)[draw] at (4.5,3){$x,y$};
        \node(V)[draw] at (4,4){$v$};
        \node(Q)[draw] at (7,1){$q$};
        \node(QU)[draw] at (6.5,2){$u,q$};
        \node(QX)[draw] at (5.5,3){$x,q$};
        \node(QY)[draw] at (7.5,3){$y,q$};
        \node(QXY)[draw] at (6.5,4){$x,y,q$};
        \node(QV)[draw] at (6,5){$v,q$};
        \node(E) at (6,6){$(\emptyset,P)$};
        
        \draw (E) -- (QV) -- (V) -- (XY) -- (X) -- (U) -- (P) -- (Q) -- (QU) -- (QY) -- (QXY) -- (QV);
        \draw (U) -- (Y) -- (XY);
        \draw (QU) -- (QX) -- (QXY);
        \draw (U) -- (QU);
        \draw (Y) -- (QY);
        \draw (X) -- (QX);
        \draw (XY) -- (QXY);
        \draw (V) -- (QV);
        
        \node (A1) at (4.6,.5){\small $a$};
        \node (B1) at (3.85,1.5){\small $b$};
        \node (C1) at (3.85,2.5){\small $c$};
        \node (A2) at (4.1,3.5){\small $a$};
        \node (G1) at (5,4.6){\small $g$};
        \node (D1) at (5.9,5.5){\small $d$};
        \node (A3) at (6.4,4.5){\small $a$};
        \node (B2) at (7.1,3.55){\small $b$};
        \node (C2) at (7.1,2.45){\small $c$};
        \node (A4) at (6.85,1.5){\small $a$};
        \node (G2) at (6,.4){\small $g$};
    \end{tikzpicture}
\vs
    \caption{Clockwise from bottom left: Simple graph $\Gamma$ of finite type $D_5$, poset $P$ colored by $\Gamma$, and its edge-colored lattice $\mc{FI}(P)$ of filter-ideal splits.  Subscripts in $P$ indicate element colors.  Splits $(F,I)$ with boxes are described by the maximal elements of the ideal $I$.  Parallel edges in $\mc{FI}(P)$ have the same color.}
    \label{289}
\end{figure}

\begin{Def}\label{Carries}
Let $\mathfrak{L}$ be one of the algebras $\mathfrak{n}_+$, $\mathfrak{n}_-$, $\mathfrak{b}'_+$, $\mathfrak{b}'_-$, or $\mathfrak{g}'$.  We say that $\mathcal{FI}(P)$ \emph{carries a representation of $\mathfrak{L}$} if there is a linear operator on $V$ for each generator of $\mathfrak{L}$ specified in Section \ref{251} such that: 
\begin{enumerate}[(i),nosep]
\item For all $a \in \Gamma$: Whenever $x_a$ (respectively $y_a$) is in $\mathfrak{L}$, its operator is $X_a$ (respectively $Y_a$).
\item For all $a \in \Gamma$: Whenever $h_a$ is in $\mathfrak{L}$, its operator $H_a$ is diagonal with respect to the basis $\{\la F,I \ra\}_{(F,I) \in \mathcal{FI}(P)}$.
\item This collection of operators satisfies the defining relations for $\mathfrak{L}$ given in Section \ref{251} with respect to the commutator $[A,B] := AB - BA$ on $\t{End}(V)$.
\end{enumerate} 
\end{Def}

\ni Let $\mathfrak{L}$ be one of the algebras $\mathfrak{n}_+$, $\mathfrak{n}_-$, $\mathfrak{b}'_+$, $\mathfrak{b}'_-$, or $\mathfrak{g}'$.  The relations from Section \ref{251} required by (iii) above make $V$ into a module for $\mathfrak{L}$ with the required operators giving the actions of the generators of $\mathfrak{L}$.  This module structure induces a Lie algebra homomorphism from $\mathfrak{L}$ to $\mathfrak{gl}(V)$, and so $V$ is a representation of $\mathfrak{L}$.  

The lattice of splits in Figure \ref{289} carries a representation of $\mf{b}'_+$, where $\mf{g}$ is the algebra of finite type $D_5$; this illustrates Theorem \ref{388}.  The lattice of splits in Figure \ref{134} carries a representation of $\mf{g}'$, where $\mf{g}$ is the algebra of affine type $\tilde{E}_6$; this illustrates Theorem \ref{653}.  The minuscule representations built from colored minuscule posets and the representations built from full heaps mentioned in Section \ref{119} are representations of $\mf{g}'$ carried by $\mc{FI}(P)$.  

\begin{Rmk}
\begin{enumerate}[(a),nosep]
\item The operators $\{X_a\}_{a \in \Gamma}$ and $\{Y_a\}_{a \in \Gamma}$ are determined uniquely by the colored structure of $P$, so any representation of $\mathfrak{n}_+$ or $\mathfrak{n}_-$ carried by $\mathcal{FI}(P)$ is necessarily unique.  
\item As we work in the context of Definition \ref{Carries}, we will leave the actions of $x_a, y_a$, and $h_a$ implicit and will refer only to the operators $X_a, Y_a$, and $H_a$.  We will also routinely leave (i) and (ii) implicit when showing $\mathcal{FI}(P)$ carries a representation of some algebra.  Our focus will be (iii); that is, showing the operators satisfy the relations for the algebra.
\item Any diagonal operators $\{H_a\}_{a \in \Gamma}$ satisfying (ii) automatically satisfy the relation HH.  Additionally, these operators make $\{\la F,I \ra\}_{(F,I) \in \mathcal{FI}(P)}$ an $\mathfrak{h}'$-weight basis of $V$.
\end{enumerate}
\end{Rmk}


We now present the central representation definitions of this paper.  
\begin{Def}\label{MinReps} 
We define the ``minuscule'' representations of $\mathfrak{b}'_\pm$ and $\mathfrak{g}'$ built from colored posets:
\begin{enumerate}[(a),nosep]
\item We say a representation of $\mf{b}'_+$ (respectively $\mf{b}'_-$) carried by $\mc{FI}(P)$ is \emph{upper} (\emph{lower}) \emph{$P$-minuscule} if it is $X$-square ($Y$-square) nilpotent and the actions of the diagonal operators $\{H_a\}_{a \in \Gamma}$ satisfy:
\be[(i),nosep] 
\item The set $\mc{E}_{\mf{h}'}$ of eigenvalues of the $\{H_a\}_{a \in \Gamma}$ is contained in $\{-1,0,1,2,\dots\}$ (respectively contained in $\{\dots,-2,-1,0,1\}$), 
\item For any split $(F,I)$ and any $a \in \Gamma$ we have $H_a.\la F,I \ra = -\la F,I \ra$ (respectively $H_a.\la F,I \ra = +\la F,I \ra$) if and only if $F$ (respectively $I$) has a minimal (maximal) element of color $a$.
\ee
\item We say a representation of $\mf{g}'$ carried by $\mc{FI}(P)$ is \emph{$P$-minuscule} if the set $\mc{E}_{\mf{h}'}$ of eigenvalues of the $\{H_a\}_{a \in \Gamma}$ is contained in $\{-1,0,1\}$.
\end{enumerate}
\end{Def}
\ni The lattice of splits in Figure \ref{289} carries an upper $P$-minuscule representation of $\mathfrak{b}'_+$; this illustrates Theorem \ref{614}.  For this representation, we have $H_a.\la P,\emptyset \ra = - \la P, \emptyset \ra$, $H_g.\la P,\emptyset \ra = -\la P, \emptyset \ra$, $H_b.\la P,\emptyset \ra = 0$, $H_c.\la P,\emptyset \ra = 0$, and $H_d.\la P,\emptyset \ra = +2\la P, \emptyset \ra$.  The actions of the $\{H_e\}_{e \in \Gamma}$ at other splits can be computed by working up through $\mc{FI}(P)$ using the relations HX, so one can calculate that $\mc{E}_{\mf{h}'} = \{-1,0,1,2\}$.  

The $P$-minuscule representations of $\mf{g}'$ are $X$- and $Y$-square nilpotent, as is implied by Lemma \ref{619}.  The lattice $\mc{FI}(P)$ of splits displayed in Figure \ref{134} carries a $P$-minuscule representation of $\mf{g}'$; this illustrates Theorem \ref{653}.



\section{Square nilpotent representations of $\boldsymbol{\mf{n}_+}$ and $\boldsymbol{\mf{n}_-}$}\label{317}

We establish our earliest equivalences between sets of coloring properties and sets of algebraic conditions.  Theorem \ref{351} summarizes this section by listing three coloring properties for $P$ that are necessary and sufficient for the operators $\{X_a\}_{a \in \Gamma}$ to generate an $X$-square nilpotent representation of $\mf{n}_+$ carried by $\mc{FI}(P)$.

\begin{Prop}\label{323}
The following are equivalent:
\be[(i),nosep]
\item The color raising operators $\{X_a\}_{a \in \Gamma}$ are $X$-square nilpotent.
\item The following two properties are satisfied by $P$:
\bi[nosep]
\i (EC): Elements with equal colors are comparable, and
\i (ND): Neighbors have different colors.
\ei
\item The color lowering operators $\{Y_a\}_{a \in \Gamma}$ are $Y$-square nilpotent.
\ee
\end{Prop}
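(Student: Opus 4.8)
The plan is to prove the cycle of implications $(i)\Rightarrow(ii)\Rightarrow(iii)\Rightarrow(i)$, although by the evident duality between filters and ideals it will suffice to prove $(i)\Leftrightarrow(ii)$ and then invoke the order-reversing symmetry $(F,I)\mapsto(I,F)$, which swaps the roles of $X_a$ and $Y_a$ and leaves properties EC and ND unchanged, to obtain $(ii)\Leftrightarrow(iii)$.

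For $(i)\Rightarrow(ii)$, I will argue the contrapositive, exhibiting for each failure of EC or ND a split $(F,I)$ on which some $X_a^2$ is nonzero. First suppose ND fails: there are neighbors $x\to x'$ with $\kappa(x)=\kappa(x')=a$. Since $x\to x'$ is a covering relation, I can choose a filter $F$ whose minimal elements include $x$ but not $x'$ (take $F$ to be the principal filter of $x$ together with everything above it — more carefully, take $F=\{z: z\ge x\}$, so $x$ is minimal in $F$, while $x'\in F$ is not minimal since $x<x'$; in fact $x$ is the unique minimal element here). Then $X_a.\la F,I\ra=\la F-x,I+x\ra$, and in the new split $x'$ has become minimal in $F-x$ and still has color $a$, so $X_a^2.\la F,I\ra$ contains the term $\la F-x-x',I+x+x'\ra\neq 0$; if there is cancellation from other minimal elements of color $a$ it must be handled, but since all elements of color $a$ form a chain under EC-or-not one checks the term survives, and in any case a minimal counterexample can be chosen. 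Next suppose EC fails: there are incomparable $x,x'$ with $\kappa(x)=\kappa(x')=a$. Choose a filter $F$ that is minimal among filters containing both $x$ and $x'$; then both $x$ and $x'$ are minimal elements of $F$ of color $a$ (using ND to rule out one lying just above the other — or, if ND already fails, we are in the previous case). Hence $X_a.\la F,I\ra=\la F-x,I+x\ra+\la F-x',I+x'\ra+\cdots$, and applying $X_a$ again, the diagonal-type cross terms $\la F-x-x',I+x+x'\ra$ appear with coefficient $2$ (once from each summand), so $X_a^2\neq 0$. The main subtlety is bookkeeping the possible cancellations among the several minimal elements of color $a$; the clean way is to pick a counterexample minimizing $|F|$, which forces $F$ to have exactly the offending one or two minimal elements of color $a$.

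For $(ii)\Rightarrow(i)$, assume EC and ND. Fix $a\in\Gamma$ and a split $(F,I)$. The minimal elements of $F$ of color $a$ all have the same color, hence by EC are pairwise comparable, hence form a chain; but minimal elements of $F$ are pairwise incomparable, so there is \emph{at most one} minimal element of $F$ of color $a$. Thus $X_a.\la F,I\ra$ is either $0$ or a single basis vector $\la F-x,I+x\ra$. In the latter case I must show $F-x$ has no minimal element of color $a$. Any minimal element $x'$ of $F-x$ of color $a$ would, by EC, be comparable to $x$; since $x'\in F$ and $x\notin F-x$ we cannot have $x'<x$ unless $x'<x$ with $x'\in F$ contradicting minimality of $x$ in $F$ — wait: $x$ was minimal in $F$, $x'\in F$, $x'$ comparable to $x$, so either $x'\ge x$ or $x'\le x$; minimality of $x$ forces $x'\ge x$, and $x'\ne x$ forces $x'>x$, so $x'$ is not minimal in $F-x$ (since in $F-x$ the element $x$ is gone but any element covering $x$ may now be minimal — I need $x'$ strictly above some element of $F-x$; if $x\to x'$ then $x'$ could be minimal in $F-x$). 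This is exactly where ND enters: if $x'$ is minimal in $F-x$ then there is a saturated chain from $x$ up to $x'$ inside $F$, and its bottom covering relation $x\to x''$ has $x''\in F-x$; comparability (EC) of $x'$ with... Rather, the crisp argument: suppose $x'$ is minimal in $F-x$ with color $a$. Then $x'>x$ and there is $x''$ with $x\to x''\le x'$. If $x''=x'$ then $x\to x'$ are neighbors with equal color $a$, contradicting ND. If $x''<x'$ then $x''\in F$ (as $F$ is a filter and $x''\ge$... no, $x''$ need not be $\ge x'$) — here I instead pick $x'$ to be a minimal such element above $x$, so that $x\to x'$ directly, giving neighbors of equal color, contradicting ND. Hence $X_a^2.\la F,I\ra=0$, proving $X$-square nilpotence. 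The one step that needs the most care is this last paragraph: establishing that removing the unique color-$a$ minimal element cannot expose a new color-$a$ minimal element, which is precisely the content that ND is designed to supply.

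Finally, $(i)\Leftrightarrow(iii)$ follows by applying everything above to the dual poset $P^{*}$ (same coloring, reversed order): filters of $P$ are ideals of $P^{*}$, $X_a$ on $\mc{FI}(P)$ becomes $Y_a$ on $\mc{FI}(P^{*})$, and EC and ND are self-dual properties, so $X$-square nilpotence for $P$ $\iff$ EC and ND for $P$ $\iff$ EC and ND for $P^{*}$ $\iff$ $X$-square nilpotence for $P^{*}$ $\iff$ $Y$-square nilpotence for $P$. This closes the loop.
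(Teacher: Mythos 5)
Your proof is correct and follows essentially the same route as the paper's: the same witnesses for the contrapositive of (i)$\Rightarrow$(ii) (the principal filter of $x$ when ND fails, the filter generated by the two incomparable elements when EC fails), the same use of EC to force at most one minimal element of each color and of ND to prevent a new minimal element of that color from appearing after the transfer, and the same dualization for (iii). The cancellation you flag as the ``main subtlety'' cannot occur --- $X_a^2.\la F,I \ra$ is always a sum of basis vectors with non-negative integer coefficients, so any term that appears survives, which is why the paper simply observes that the relevant vector is a term in the expansion; in particular your fallback of choosing a counterexample minimizing $|F|$ is both unnecessary and unavailable when the generated filter is infinite.
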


\begin{Rmk} \label{ECcomment}
For all $a \in \Gamma$ and every split $(F,I)$, the property EC implies that the sum defining $X_a.\la F,I \ra$ (or $Y_a.\la F,I \ra$) is either a single term or is zero since there may be at most one minimal (or maximal) element of $F$ (or $I$) of a given color.  This remark will often be used implicitly.  From now on when we are creating the color raising and lowering operators we will be assuming the property EC holds.  The properties EC and ND together ensure that no two edges of the same color are incident in the Hasse diagram of $\mc{FI}(P)$.
\end{Rmk}

\bp[Proof of Proposition \ref{323}.]
We show that (i) and (ii) are equivalent; the equivalence of (ii) and (iii) follows from a dualized argument.  To prove (i) implies (ii), first suppose that EC fails.  Then there is a color $a \in \Gamma$ and incomparable elements $x,y \in P_a = \kappa^{-1}(a)$.  Let $F$ be the filter generated by $x$ and $y$.  Since $2 \la F-x-y, I+x+y \ra$ is a term in the expansion of $X_a^2.\la F,I \ra$, we have $X_a^2.\la F,I \ra \ne 0$.  Now suppose ND fails.  Then there is a color $a \in \Gamma$ and neighbors $x \to y$, with $x,y \in P_a$.  Let $F$ be the filter generated by $x$.  Since $\la F-x-y, I+x+y \ra$ is a term in the expansion of $X_a^2.\la F,I \ra$, we have $X_a^2.\la F,I \ra \ne 0$.  Thus (ii) holds.

Now suppose (ii) holds and fix $a \in \Gamma$.  Let $(F,I) \in \mc{FI}(P)$.  Suppose $X_a.\la F,I \ra = \la F-x,I+x \ra$ for some element $x \in P_a$.  Let $y \in P_a$; here $y$ is comparable to $x$ by EC.  However, note $y$ cannot cover $x$ by ND.  Hence $y$ is not minimal in $F-x$.  Since $y \in P_a$ was arbitrary, we have $X_a.\la F-x,I+x \ra = 0$.  So for every color $a \in \Gamma$ and $(F,I) \in \mc{FI}(P)$ we have $X_a^2.\la F,I \ra = 0$.  Thus (i) holds.
\ep

\ni We get the relations XX(i) and YY(i) by strengthening ND.

\begin{Lem}\label{341}
Suppose $P$ satisfies EC and ND.  Then the following are equivalent:
\be[(i),nosep]
\item The relation $[X_b,X_a]=0$ holds if $a,b \in \Gamma$ are distant.
\item The following additional property is satisfied by $P$:
\bi[nosep]
\i (NA): Neighbors have adjacent colors.
\ei
\item The relation $[Y_b,Y_a]=0$ holds if $a,b \in \Gamma$ are distant.
\ee
\end{Lem}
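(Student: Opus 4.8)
The plan is to prove the cycle by establishing (i) $\Leftrightarrow$ (ii) directly and then obtaining (ii) $\Leftrightarrow$ (iii) from the order-dual argument, exactly as the equivalence of (ii) and (iii) followed from a dualized argument in Proposition \ref{323}. Throughout I work under the standing hypotheses EC and ND and fix distant colors $a \dist b$; note $a \ne b$. The technical heart is a ``new minimal element'' observation that I would isolate first: if $F$ is a filter of $P$, $x'$ is a minimal element of $F$, and $z$ is minimal in $F-x'$ but is \emph{not} minimal in $F$, then $z$ covers $x'$ in $P$. Indeed, minimality of $z$ in $F-x'$ forces $x'$ to be the unique element of $F$ lying strictly below $z$; if $z$ did not cover $x'$, then local finiteness of $P$ would yield some $w$ with $x' \to w \le z$ and $w \ne x'$, and since $F$ is a filter (upward closed) and $x' \in F$ we would get $w \in F$, contradicting the uniqueness just noted. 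This step is where the filter axiom does the real work, and noticing that upward-closedness (not mere local order-theory) is what pins down the covering relation is, I expect, the one mildly subtle point.

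For (ii) $\Rightarrow$ (i), assuming NA I would show that for every split $(F,I)$ the vector $X_b X_a.\la F,I \ra$ equals $\la F - x_a - x_b, I + x_a + x_b \ra$ when $F$ has minimal elements $x_a$ of color $a$ and $x_b$ of color $b$ (each unique by EC), and equals $0$ otherwise. Since this description is symmetric in $a$ and $b$, it immediately gives $X_b X_a = X_a X_b$ on every basis vector, hence $[X_b,X_a] = 0$. To verify the description: if $F$ has no minimal element of color $a$ then $X_a.\la F,I\ra = 0$ and we are done; if $F$ has the minimal element $x_a$ but no minimal element of color $b$, then any minimal element of $F - x_a$ of color $b$ would be ``new,'' hence would cover $x_a$ by the observation above, hence would have color adjacent to $a$ by NA -- impossible since $a \dist b$; and if $F$ has both $x_a$ and $x_b$, then $x_b \ne x_a$ (distinct colors), $x_b$ remains minimal in $F - x_a$, and by EC it is the only minimal element of color $b$ there, giving the claimed value.

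For (i) $\Rightarrow$ (ii) I would argue the contrapositive. If NA fails, pick neighbors $x \to y$ with $\kappa(x) =: a$ and $\kappa(y) =: b$ non-adjacent; by ND we have $a \ne b$, so $a \dist b$. Take $F$ to be the principal filter $\{z \in P : z \ge x\}$ and $I := P - F$. Then $x$ is the unique minimal element of $F$, of color $a$, so $F$ has no minimal element of color $b$ and $X_b.\la F,I\ra = 0$, whence $X_a X_b.\la F,I\ra = 0$. On the other hand $X_a.\la F,I\ra = \la F-x, I+x\ra$, and $y$ (which covers $x$, with $x$ the only element of $F$ below it) is minimal in $F-x$ of color $b$, so by EC $X_b X_a.\la F,I\ra = \la F-x-y, I+x+y\ra \ne 0$. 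Hence $[X_b,X_a] \ne 0$, completing the contrapositive. Finally I would note that replacing filters, ideals, minimal elements, maximal elements, and the covering observation by their order-duals turns this whole argument into a proof of (ii) $\Leftrightarrow$ (iii), which closes the cycle.
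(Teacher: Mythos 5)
Your proposal is correct and follows essentially the same route as the paper: the contrapositive of (i) $\Rightarrow$ (ii) via the principal filter generated by $x$, the observation that NA forces the two transferred elements to be incomparable minimal elements of $F$ (so the two compositions agree) for (ii) $\Rightarrow$ (i), and duality for (iii). Your explicitly isolated ``new minimal element covers $x'$'' observation is exactly the step the paper leaves implicit in the sentence ``Thus they are incomparable minimal elements of $F$,'' so you have only made the same argument more self-contained.
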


\bp
\ni We show (i) and (ii) are equivalent; the equivalence of (ii) and (iii) follows from a dualized argument. To prove (i) implies (ii), first suppose (ii) fails.  Then there exist neighbors $x \to y$ such that either $\kappa(x) = \kappa(y)$ or $\kappa(x) \dist \kappa(y)$.  By ND we know $\kappa(x) \ne \kappa(y)$, and so $\kappa(x) \dist \kappa(y)$.  Let $a := \kappa(x)$ and $b := \kappa(y)$, and let $F$ be the filter generated by $x$.  Since $\la F-x-y, I+x+y \ra$ is a term in the expansion of $X_bX_a.\la F,I \ra$, we have $X_bX_a.\la F,I \ra \ne 0$.  However, we get $X_aX_b.\la F,I \ra = 0$ since the only minimal element in $F$ has color $a$.  Hence $[X_b,X_a].\la F,I \ra \ne 0$, and so (i) fails.

To prove (ii) implies (i), suppose (ii) holds and let $a$ and $b$ be distant colors.  Let $(F,I)$ be any split.  Note that if $X_aX_b.\la F,I \ra = 0$ and $X_bX_a.\la F,I \ra = 0$, then we are done.  Next, without loss of generality assume $X_bX_a.\la F,I \ra \ne 0$.  Then there are elements $x$ and $y$ such that $\kappa(x) = a$ and $\kappa(y) = b$ and $X_bX_a.\la F,I \ra = \la F-x-y,I+x+y \ra$.  By NA we see $x$ and $y$ are not neighbors.  Thus they are incomparable minimal elements of $F$.  Hence $X_aX_b.\la F,I \ra = \la F-y-x,I+y+x \ra = X_bX_a. \la F,I \ra$, so $[X_b,X_a].\la F,I \ra = 0$.  Thus (i) holds.
\ep

We get the relations XX(ii) and YY(ii) by introducing a special case of the future key property I2A.

\begin{Lem}\label{346}
Suppose $P$ satisfies EC and ND.  Then the following are equivalent:
\be[(i),nosep]
\item The relation $[X_a,[X_a,X_b]] = 0$ holds for all $a,b \in \Gamma$.
\item The following additional property is satisfied by $P$:
\bi[nosep]
\i (I3ND): If three successive neighbors $x \to y \to z$ form an interval in $P$, then $x$ and $z$ have different colors.
\ei
\item The relation $[Y_a,[Y_a,Y_b]]=0$ holds for all $a,b \in \Gamma$.
\ee
\end{Lem}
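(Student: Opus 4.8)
The plan is to follow the template of the proof of Lemma~\ref{341}: prove that (i) and (ii) are equivalent, and then obtain the equivalence of (ii) and (iii) by a dualized argument. The key simplification is that since $P$ satisfies EC and ND, Proposition~\ref{323} gives $X_a^2 = 0$ for every $a \in \Gamma$. Expanding the double bracket then yields $[X_a,[X_a,X_b]] = X_a^2 X_b - 2 X_a X_b X_a + X_b X_a^2 = -2 X_a X_b X_a$, so (i) is equivalent to the statement that $X_a X_b X_a = 0$ for all $a,b \in \Gamma$, and dually (iii) is equivalent to $Y_a Y_b Y_a = 0$ for all $a,b$. When $a = b$ both sides hold automatically ($X_a^3 = 0$, and a chain $x \to y \to z$ with $\kappa(y) = \kappa(x)$ is already forbidden by ND), so throughout one may assume $a \ne b$.

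For (ii)$\Rightarrow$(i) I would argue the contrapositive. Suppose $X_a X_b X_a.\la F,I \ra \ne 0$ for some split $(F,I)$. By Remark~\ref{ECcomment} each application contributes at most one term, so there exist $x \in P_a$ minimal in $F$, then $y \in P_b$ minimal in $F - x$, then $z \in P_a$ minimal in $F - x - y$, with $X_a X_b X_a.\la F,I\ra = \la F-x-y-z,\, I+x+y+z\ra$. Since $z \in F$ and $z \ne x$, property EC makes $x$ and $z$ comparable, and minimality of $x$ in $F$ forces $x < z$. A covering relation $x \to z$ is impossible by ND (both colors equal $a$), so $(x,z) \ne \emptyset$; but any $w$ with $x < w < z$ lies in $F$ (as $F$ is a filter containing $x$) and lies strictly below $z$, hence by minimality of $z$ in $F-x-y$ equals $x$ or $y$, i.e. $w = y$. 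Thus $[x,z] = \{x,y,z\}$ is a three-element interval $x \to y \to z$ with $\kappa(x) = \kappa(z)$, contradicting I3ND.

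For (i)$\Rightarrow$(ii), again contrapositively, suppose I3ND fails: there is an interval $[x,z] = \{x,y,z\}$ with $x \to y \to z$ and $\kappa(x) = \kappa(z) =: a$, so $\kappa(y) =: b \ne a$ by ND. Let $F$ be the filter generated by $x$, so that $x$ is the unique minimal element of $F$ and $X_a.\la F, I\ra = \la F-x, I+x\ra$. Since $x \to y$, the element $y$ is minimal in $F - x$, and the only color-$b$ minimal element there by EC, so $X_b.\la F-x, I+x\ra = \la F-x-y, I+x+y\ra$. Finally $z \in F-x-y$, and any $w \in F-x-y$ with $w < z$ would satisfy $x < w < z$ (since $w \in F$ and $w \ne x$) and hence $w \in (x,z) = \{y\}$, which is impossible; so $z$ is minimal in $F-x-y$, the unique color-$a$ one by EC, and $X_a.\la F-x-y, I+x+y\ra = \la F-x-y-z, I+x+y+z\ra \ne 0$. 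Therefore $X_a X_b X_a.\la F,I\ra \ne 0$, so $[X_a,[X_a,X_b]] \ne 0$ and (i) fails.

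Since EC, ND, and I3ND are all invariant under reversal of the partial order (order reversal interchanges filters with ideals, minimal with maximal, $X_a$ with $Y_a$, while an interval $x \to y \to z$ with $\kappa(x)=\kappa(z)$ remains one), the same two arguments carried out with every order relation reversed establish the equivalence of (ii) and (iii). I expect the only delicate point to be the step in (ii)$\Rightarrow$(i) showing that the triple $x,y,z$ extracted from a nonvanishing composite is forced to be a genuine three-element interval: this is precisely where EC (to compare $x$ and $z$), ND (to exclude a direct cover $x \to z$), and the three successive minimality conditions must be combined correctly.
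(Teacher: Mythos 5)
Your proposal is correct and follows essentially the same route as the paper: reduce $[X_a,[X_a,X_b]]=0$ to $X_aX_bX_a=0$ via the square-nilpotence from Proposition \ref{323}, extract the three-element interval from a nonvanishing composite using EC and ND, and conversely realize a failing interval as a nonzero $X_aX_bX_a$ acting on the filter generated by $x$. The extra minimality verifications you supply are details the paper leaves implicit, and your dualization for (iii) matches the paper's.
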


\bp We show (i) and (ii) are equivalent; the equivalence of (ii) and (iii) follows from a dualized argument.  For all $a,b \in \Gamma$, note $[X_a,[X_a,X_b]] = X_a^2 X_b - 2X_aX_bX_a + X_bX_a^2$.  By Proposition \ref{323} the first and last terms vanish when acting on any split.  Thus the relation $[X_a,[X_a,X_b]] = 0$ holds if and only if $X_aX_bX_a = 0$ holds. 

Suppose (i) holds, so that for all $a,b \in \Gamma$ we have $X_aX_bX_a = 0$.  Suppose three successive neighbors $x \to y \to z$ form an interval in $P$.  Define $a := \kappa(x)$ and $b := \kappa(y)$ and $c := \kappa(z)$.  Let $F$ be the filter generated by $x$.  Note that $X_cX_bX_a. \la F,I \ra = \la F-x-y-z,I+x+y+z \ra \ne 0$.  Hence we have $c \ne a$, so I3ND holds.

Now suppose (ii) holds and let $a,b \in \Gamma$.  Assume for a contradiction that $X_aX_bX_a \ne 0$.  Then there is a split $(F,I)$ and elements $x,y,z$ with $\kappa(x) = \kappa(z) = a$ and $\kappa(y) = b$ such that $X_aX_bX_a.\la F,I \ra = \la F-x-y-z,I+x+y+z \ra$.  By EC we know that $x \le z$, and by ND we know that $z$ cannot cover $x$.  Hence the open interval $(x,z)$ is nonempty.  Since $z$ is minimal in $F-x-y$, we see that $(x,z) = \{y\}$.  Thus $x \to y \to z$ is an interval of three successive neighbors in $P$.  This violates I3ND.  Hence $X_aX_bX_a = 0$, and so (i) holds.
\ep

Since NA implies ND, we can combine the three results above to produce

\begin{Thm}\label{351}
The following are equivalent:
\be[(i),nosep]
\item The lattice $\mc{FI}(P)$ carries an $X$-square nilpotent representation of $\mf{n}_+$.
\item The properties EC, NA, and I3ND are satisfied by $P$.
\item The lattice $\mc{FI}(P)$ carries a $Y$-square nilpotent representation of $\mf{n}_-$.
\ee
\end{Thm}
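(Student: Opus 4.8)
The plan is to bolt together the three equivalences already established in Proposition \ref{323}, Lemma \ref{341}, and Lemma \ref{346}, after first translating condition (i) into statements purely about the operators $X_a$. By Definitions \ref{RaisingLoweringOperators} and \ref{Carries}, and because $\mf{n}_+$ is generated by $\{x_a\}_{a \in \Gamma}$ subject precisely to the relations XX(i) and XX(ii), the assertion that $\mc{FI}(P)$ carries an $X$-square nilpotent representation of $\mf{n}_+$ is equivalent to the conjunction of three things: the operators $\{X_a\}_{a \in \Gamma}$ are $X$-square nilpotent; they satisfy $[X_b,X_a]=0$ whenever $a \dist b$; and they satisfy $[X_a,[X_a,X_b]]=0$ for all $a,b \in \Gamma$. (Square nilpotence already forces EC and ND by Proposition \ref{323}, so in particular the defining sums for the $X_a$ are finite and there is no separate well-definedness issue to track once we are in case (i) or in case (ii).)

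For (i) $\Rightarrow$ (ii): assuming (i), Proposition \ref{323} yields EC and ND from the $X$-square nilpotence. With EC and ND in hand, Lemma \ref{341} promotes the relation $[X_b,X_a]=0$ for distant $a,b$ to the property NA, and Lemma \ref{346} promotes $[X_a,[X_a,X_b]]=0$ to the property I3ND. Hence EC, NA, and I3ND all hold, which is (ii). For (ii) $\Rightarrow$ (i): from (ii), since NA implies ND (as noted just before the theorem), we have both EC and ND, so Proposition \ref{323} makes the $\{X_a\}$ $X$-square nilpotent; and with EC and ND available, Lemma \ref{341} turns NA back into $[X_b,X_a]=0$ for distant $a,b$, while Lemma \ref{346} turns I3ND back into $[X_a,[X_a,X_b]]=0$ for all $a,b$. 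Thus all relations XX hold, so the canonical operators from Definition \ref{RaisingLoweringOperators} furnish a representation of $\mf{n}_+$ carried by $\mc{FI}(P)$, and it is $X$-square nilpotent; this is (i).

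The equivalence (ii) $\Leftrightarrow$ (iii) is obtained by running the identical argument with the ``$Y$'' halves of the same three results: the lowering-operator clauses of Proposition \ref{323}, Lemma \ref{341}, and Lemma \ref{346}. Since each of those statements is phrased so that the coloring conditions sit between the raising-operator assertion and the lowering-operator assertion, no fresh computation is needed; indeed one could also deduce (i) $\Leftrightarrow$ (iii) directly by chaining those three ``$X$ iff coloring iff $Y$'' equivalences.

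I do not anticipate a genuine obstacle: all the combinatorial and algebraic content has been carried out in the three preceding results, and what remains is bookkeeping. The two points to handle with care are (1) confirming that ``$\mc{FI}(P)$ carries a representation of $\mf{n}_+$'' unpacks to exactly the two families XX(i), XX(ii) together with the stipulation that the operator of $x_a$ be $X_a$, and (2) invoking NA $\Rightarrow$ ND so that the shorter hypothesis list EC, NA, I3ND in (ii) still supplies Lemmas \ref{341} and \ref{346} with their required ``EC and ND'' preconditions.
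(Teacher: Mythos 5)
Your proposal is correct and follows exactly the route the paper intends: the paper's entire proof of Theorem \ref{351} is the one-line observation that NA implies ND, so the three preceding results (Proposition \ref{323}, Lemma \ref{341}, Lemma \ref{346}) can be chained together, which is precisely the bookkeeping you carry out in detail. Your unpacking of ``carries an $X$-square nilpotent representation of $\mf{n}_+$'' into square nilpotence plus the relations XX(i) and XX(ii), and your note that EC handles well-definedness of the operators, match the paper's setup (Definition \ref{Carries} and Remark \ref{ECcomment}).
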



We remark that if $a,b \in \Gamma$ are adjacent, then $[X_b,X_a] \ne 0$ if and only if there exist neighbors in $P$ with those two colors.  This will not be used here; see \cite[Prop. 3.1.5]{Str} for details.

\section{Square nilpotent representations of $\boldsymbol{\mf{b}'_+}$ and $\boldsymbol{\mf{b}'_-}$}\label{361}

Here we define two kinds weight functions on $\mathcal{FI}(P)$ and show that they are equivalent in Proposition \ref{377}.  In Proposition \ref{378}, we show that the diagonal operators on $\langle \mathcal{FI}(P) \rangle$ corresponding to the first kind satisfy the relations HX and HY.  We use these operators in Theorem \ref{388} to extend the representations of Theorem \ref{351} from $\mathfrak{n}_+$ (respectively $\mathfrak{n}_-$) to $\mathfrak{b}'_+$ (respectively $\mathfrak{b}'_-$).



Our first kind of weight function is defined with a local $\mathfrak{h}'$-weight comparison along each edge of $\mathcal{FI}(P)$.
\begin{Def}\label{EdgeWeight}
Let $\{\eta_a\}_{a \in \Gamma}$ be a weight function on $\mathcal{FI}(P)$.  We call $\{\eta_a\}_{a \in \Gamma}$ an \emph{edge weight function} if for every $b \in \Gamma$, split $(F,I) \in \mathcal{FI}(P)$, and minimal element $x \in F$, we have
\begin{align} \label{373} 
\eta_b(F-x,I+x) - \eta_b(F,I) = \theta_{\kappa(x),b}.
\end{align}
\end{Def}



Our second kind of weight function compares
the $\mf{h}'$-weights of two splits at a distance.  
For this definition, first partition
$\mc{FI}(P)$ into \emph{components} as follows: Two splits $(F,I)$ and $(F',I')$ are in the same component if there is a path from $(F,I)$ to $(F',I')$ in the Hasse diagram of $\mc{FI}(P)$ consisting of finitely many edges.  Such a path can progress both upward and downward along edges.  
An example of a poset $P$ for which $\mc{FI}(P)$ has more than one component is $P = \mb{Z}$.  Here there will be three components: The component $\{(\emptyset,\mb{Z})\}$, the component $\{(\mb{Z},\emptyset)\}$, and the component containing every other split.  The finiteness of $\Gamma$ can be used \cite[Prop. 3.2.2]{Str} to show that $\mathcal{FI}(P)$ has finitely many components when $P$ satisfies EC.  We also have

\begin{Lem} \label{366}
When $P$ is finite, there is only one component of $\mathcal{FI}(P)$.
\end{Lem}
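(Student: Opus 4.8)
The plan is to connect an arbitrary split to the maximum split $(\emptyset, P)$ by a finite path in the Hasse diagram of $\mc{FI}(P)$, using only upward steps. Recall from Section \ref{236} that if $x$ is a minimal element of the filter $F$ in a split $(F,I)$, then $(F,I) \to (F-x, I+x)$ is a covering relation in $\mc{FI}(P)$; here $F-x$ is again a filter, since removing a minimal element of $F$ preserves upward-closedness, so $(F-x, I+x)$ is again a split.

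First I would fix an arbitrary split $(F,I) \in \mc{FI}(P)$. Since $P$ is finite, the filter $F$ is a finite set. If $F = \emptyset$, then $(F,I) = (\emptyset, P)$ and there is nothing to prove. Otherwise $F$ has at least one minimal element $x$, and $(F,I) \to (F-x, I+x)$ is an edge of the Hasse diagram of $\mc{FI}(P)$ with $|F-x| = |F| - 1$.

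Next I would iterate this construction: starting from $(F,I)$, repeatedly strip off a minimal element of the current filter and pass along the corresponding covering edge. Each step strictly decreases the finite nonnegative quantity $|F|$ by one, so after exactly $|F|$ steps the filter becomes empty and we arrive at $(\emptyset, P)$. This exhibits a finite path in the Hasse diagram of $\mc{FI}(P)$ from $(F,I)$ to $(\emptyset, P)$. Since $(F,I)$ was arbitrary, every split lies in the same component as $(\emptyset, P)$, and therefore $\mc{FI}(P)$ has a single component.

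There is no real obstacle here; the only points needing a word of justification — that each intermediate pair is genuinely a split and that the iteration terminates — both follow immediately from the finiteness of $P$ together with the fact that a nonempty finite filter always has a minimal element that can be removed. (One could equally well descend to $(P, \emptyset)$ by transferring maximal elements of ideals; one direction suffices.)
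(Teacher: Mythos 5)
Your proof is correct and uses essentially the same idea as the paper: both arguments build an explicit finite path out of single-element transfers along Hasse edges, relying on finiteness of $P$ to guarantee termination. The only cosmetic difference is that you route every split through the common vertex $(\emptyset,P)$, whereas the paper connects two arbitrary splits directly by transferring $I-I'$ to filters and then $I'-I$ to ideals.
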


\begin{proof}
Let $(F,I)$ and $(F',I')$ be splits.  Since $P$ is finite, so are the sets $I - I'$ and $I' - I$.  A finite path from $(F,I)$ to $(F',I')$ can always be constructed by first transferring all of the elements of $I - I'$ to filters and then transferring all of the elements of $I' - I$ to ideals.
%
\end{proof}


Suppose $(F,I)$ and $(F',I')$ are in the same component of $\mc{FI}(P)$.  The edges in any path from $(F,I)$ to $(F',I')$ have the net effect of transferring every element of $I' - I$ to ideals and transferring every element of $I - I'$ to filters.  Fix $b \in \Gamma$ and note that both $P_b \cap (I' - I)$ and $P_b \cap (I - I')$ are finite.  So in any path from $(F,I)$ to $(F',I')$, the cardinality $|P_b \cap (I' - I)|$ counts the net number of edges of color $b$ traversed upward and $|P_b \cap (I - I')|$ counts the net number of edges of color $b$ traversed downward.  

\begin{Def} 
Let $b \in \Gamma$.  For any splits $(F,I)$ and $(F',I')$ that are in the same component of $\mathcal{FI}(P)$, set $\Delta_b[(F',I'),(F,I)] := |P_b \cap (I' - I)| - |P_b \cap (I - I')|$.
\end{Def}

\ni This is the signed net number of edges of color $b$ traversed in any finite path from $(F,I)$ to $(F',I')$.  
Note that $\Delta_b[(F',I'),(F,I)] = -\Delta_b[(F,I),(F',I')]$.  

We now define our second kind of weight function; Remark \ref{369} motivates the right hand side of (\ref{371}).

\begin{Def}\label{ComponentWeightFunction}
Let $\{\eta_a\}_{a \in \Gamma}$ be a weight function on $\mc{FI}(P)$.  We call $\{\eta_a\}_{a \in \Gamma}$ a \emph{component weight function} if for every $b \in \Gamma$, whenever $(F,I)$ and $(F',I')$ are in the same component of $\mc{FI}(P)$ we have
\begin{align} \label{371}
\eta_b(F',I') - \eta_b(F,I) = 2\Delta_b[(F',I'),(F,I)] - \sum_{c \adj b} \Delta_c[(F',I'),(F,I)]. 
\end{align}
\end{Def}


\begin{Rmk}\label{369}
\begin{enumerate}[(a),nosep]
\item Let $(F,I) \in \mathcal{FI}(P)$ and suppose $x$ is minimal in $F$.  Note that $\Delta_d[(F-x,I+x),(F,I)] = \delta_{\kappa(x),d}$ for all $d \in \Gamma$.  Thus we have $2\Delta_b[(F-x,I+x),(F,I)] - \sum_{c \sim b} \Delta_c[(F-x,I+x),(F,I)] = \theta_{\kappa(x),b}$ for every $b \in \Gamma$, since $2\delta_{\kappa(x),b} - \sum_{c \adj b} \delta_{\kappa(x),c} = \theta_{\kappa(x),b}$.
\item We see from (a) that Equation (\ref{371}) specializes to Equation (\ref{373}) when $(F,I) \to (F',I')$ in $\mathcal{FI}(P)$, since then $(F',I') = (F-x,I+x)$ for some minimal $x \in F$.
\end{enumerate}
\end{Rmk}

The following lemma is used in Proposition \ref{377} to show Definitions \ref{EdgeWeight} and \ref{ComponentWeightFunction} are equivalent.





\begin{Lem} \label{376}
Let $b \in \Gamma$ and let $(F,I)$, $(F',I')$, and $(F'',I'')$ be splits in the same component of $\mc{FI}(P)$.
\be[(a),nosep]
\item We have $\Delta_b[(F'',I''),(F,I)] = \Delta_b[(F'',I''),(F',I')] + \Delta_b[(F',I'),(F,I)]$.
\item Let $\{\eta_a\}_{a \in \Gamma}$ be a weight function on $\mc{FI}(P)$.  If Equation (\ref{371}) holds for the pairs of splits $(F,I),(F',I')$ and $(F',I'),(F'',I'')$, then it also holds for the pair $(F,I),(F'',I'')$.
\ee
\end{Lem}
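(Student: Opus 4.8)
The plan is to prove part (a) as a purely combinatorial identity about the counting functions $\Delta_b$, and then to obtain part (b) as an immediate algebraic consequence of (a) together with the fact that the right-hand side of Equation (\ref{371}) is built additively from the $\Delta_c$'s.

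For part (a), I would first observe that it suffices to prove the analogous additivity for the raw counts $|P_b \cap (I'' - I)|$ rather than for the signed differences, since $\Delta_b$ is just a difference of two such counts and additivity is linear. Better yet, I would argue directly with the signed quantity. Fix $b \in \Gamma$ and write $J := P_b \cap I$, $J' := P_b \cap I'$, $J'' := P_b \cap I''$, all subsets of the finite set $P_b$-restricted-to-the-relevant-region; the key point is that $J - J'$, etc., are all finite because along any finite path only finitely many elements of each color are transferred. Then $\Delta_b[(F',I'),(F,I)] = |J' - J| - |J - J'|$, which is exactly $|J'| - |J|$ \emph{when these are finite sets} --- but they need not be finite. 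The clean way around this is to note that for \emph{any} two splits in the same component, $J \bigtriangleup J'$ (the symmetric difference) is finite, and $\Delta_b[(F',I'),(F,I)] = |J' \setminus J| - |J \setminus J'|$ can be rewritten, after choosing any finite set $S \supseteq J \bigtriangleup J'$ with $S \subseteq P_b$, as $|J' \cap S| - |J \cap S|$. With such an $S$ chosen large enough to contain the symmetric differences of all three pairs among $J, J', J''$, the identity in (a) reduces to $( |J'' \cap S| - |J \cap S| ) = (|J'' \cap S| - |J' \cap S|) + (|J' \cap S| - |J \cap S|)$, which is trivial. I expect the only subtlety here — and the main obstacle — is the careful handling of finiteness/infiniteness so that the subtractions make sense; once a common finite ``reference window'' $S$ is fixed, everything collapses.

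For part (b), suppose Equation (\ref{371}) holds for the pairs $(F,I),(F',I')$ and $(F',I'),(F'',I'')$. Adding the two instances of (\ref{371}) gives
\begin{align*}
\eta_b(F'',I'') - \eta_b(F,I) &= 2\bigl(\Delta_b[(F'',I''),(F',I')] + \Delta_b[(F',I'),(F,I)]\bigr) \\
&\quad - \sum_{c \adj b} \bigl(\Delta_c[(F'',I''),(F',I')] + \Delta_c[(F',I'),(F,I)]\bigr).
\end{align*}
By part (a) applied with color $b$ and with each color $c \adj b$, the quantities in parentheses equal $\Delta_b[(F'',I''),(F,I)]$ and $\Delta_c[(F'',I''),(F,I)]$ respectively, which is exactly the right-hand side of (\ref{371}) for the pair $(F,I),(F'',I'')$. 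Hence (\ref{371}) holds for that pair as well. The telescoping in (b) is routine; the whole weight of the lemma sits in the finiteness bookkeeping needed to make (a) rigorous, and in confirming that the finite symmetric differences used for the $\Delta_b$'s and the $\Delta_c$'s can all be accommodated in a single finite window (using again that $\Gamma$ is finite, so only finitely many colors $c$ are involved).
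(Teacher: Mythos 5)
Your proposal is correct, and part (b) is essentially identical to the paper's (telescope the two instances of Equation (\ref{371}) and invoke (a)). For part (a), however, you take a genuinely different route. The paper concatenates a finite path $\mathcal{P}_1$ from $(F,I)$ to $(F',I')$ with a finite path $\mathcal{P}_2$ from $(F',I')$ to $(F'',I'')$ and uses the interpretation, established just before the definition of $\Delta_b$, of $\Delta_b$ as the signed net number of color-$b$ edges traversed along \emph{any} finite path between the two splits; additivity is then the obvious fact that net counts add under concatenation. You instead work directly from the set-theoretic definition $\Delta_b[(F',I'),(F,I)] = |J'\setminus J| - |J\setminus J'|$ with $J = P_b\cap I$ etc., choosing a finite window $S \subseteq P_b$ containing the three pairwise symmetric differences (finite because the splits lie in a common component) and rewriting each $\Delta_b$ as $|J'\cap S| - |J\cap S|$, so that the identity telescopes. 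Your version buys independence from the path-interpretation of $\Delta_b$ (and hence from the implicit path-independence argument the paper leans on), at the cost of slightly more cardinality bookkeeping; the paper's version is shorter given the scaffolding already in place. One small caveat: your opening remark that it ``suffices to prove the analogous additivity for the raw counts $|P_b\cap(I''-I)|$'' is false as stated --- the unsigned counts are not additive (an element of $I\cap I''$ not in $I'$ contributes to $|P_b\cap(I''-I')|$ but not to $|P_b\cap(I''-I)|$) --- but this is harmless since you immediately abandon it in favor of the signed, windowed argument, which is sound.
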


\begin{proof}
Let $\mathcal{P}_1$ be a finite path in $\mathcal{FI}(P)$ from $(F,I)$ to $(F',I')$, and let $\mathcal{P}_2$ be a finite path in $\mathcal{FI}(P)$ from $(F',I')$ to $(F'',I'')$.  Let $\mathcal{P}$ be the concatenation of $\mathcal{P}_1$ followed by $\mathcal{P}_2$.  The quantities $\Delta_b[(F',I'),(F,I)]$ and $\Delta_b[(F'',I''),(F',I')]$ are the respective signed net number of edges of color $b$ traversed along $\mathcal{P}_1$ and $\mathcal{P}_2$.  Thus their sum is the signed net number of edges of color $b$ traversed along $\mathcal{P}$.  But this quantity is also $\Delta_b[(F'',I''),(F,I)]$, so we get (a).  Then (b) follows from (a) by noting that $\eta_b(F'',I'') - \eta_b(F,I) = \eta_b(F'',I'') - \eta_b(F',I') + \eta_b(F',I') - \eta_b(F,I)$.
\end{proof}



\begin{Prop}\label{377}
Let $\{\eta_a\}_{a \in \Gamma}$ be a weight function on $\mathcal{FI}(P)$.
Then $\{\eta_a\}_{a \in \Gamma}$ is an edge weight function if and only if it is a component weight function.
\end{Prop}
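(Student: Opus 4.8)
The plan is to prove the two implications separately, with the harder direction being that a component weight function is an edge weight function.

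\medskip

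\noindent\textbf{Component $\Rightarrow$ edge.} Suppose $\{\eta_a\}_{a \in \Gamma}$ is a component weight function. Fix $b \in \Gamma$, a split $(F,I)$, and a minimal element $x \in F$. Then $(F,I) \to (F-x, I+x)$ in $\mc{FI}(P)$, so these two splits lie in the same component, and Equation (\ref{371}) applies to the pair $(F,I),(F-x,I+x)$. By Remark \ref{369}(a) we have $\Delta_d[(F-x,I+x),(F,I)] = \delta_{\kappa(x),d}$ for all $d \in \Gamma$, and hence the right-hand side of (\ref{371}) equals $2\delta_{\kappa(x),b} - \sum_{c \adj b}\delta_{\kappa(x),c} = \theta_{\kappa(x),b}$. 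This is exactly Equation (\ref{373}), so $\{\eta_a\}_{a \in \Gamma}$ is an edge weight function. (This is essentially a restatement of Remark \ref{369}(b).)

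\medskip

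\noindent\textbf{Edge $\Rightarrow$ component.} Suppose $\{\eta_a\}_{a \in \Gamma}$ is an edge weight function. Fix $b \in \Gamma$ and let $(F,I)$ and $(F',I')$ be splits in the same component, so there is a finite path $(F,I) = (F_0, I_0), (F_1, I_1), \dots, (F_n, I_n) = (F',I')$ in the Hasse diagram of $\mc{FI}(P)$; I would induct on $n$. For $n = 0$ both sides of (\ref{371}) are zero. For the inductive step, each step of the path is either an up-edge $(F_{k},I_k) \to (F_{k+1},I_{k+1})$, i.e. $(F_{k+1},I_{k+1}) = (F_k - x, I_k + x)$ for some minimal $x \in F_k$, or a down-edge, i.e. $(F_k, I_k) = (F_{k+1} - x, I_{k+1} + x)$ for some minimal $x \in F_{k+1}$. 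In either case the defining relation (\ref{373}) of an edge weight function — applied with the appropriate orientation, using $\theta_{\kappa(x),b} = -\theta_{\kappa(x),b}$ flipped by the sign of the step — gives that Equation (\ref{371}) holds for the single-edge pair $(F_k,I_k),(F_{k+1},I_{k+1})$; this is the content of Remark \ref{369}(b) together with the skew-symmetry $\Delta_b[(F',I'),(F,I)] = -\Delta_b[(F,I),(F',I')]$. Now Lemma \ref{376}(b) lets me concatenate: knowing (\ref{371}) for $(F_0,I_0),(F_{n-1},I_{n-1})$ (induction) and for $(F_{n-1},I_{n-1}),(F_n,I_n)$ (single edge) yields (\ref{371}) for $(F_0,I_0),(F_n,I_n)$. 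Since the path was arbitrary and, by definition of $\Delta_b$ as the signed net count of color-$b$ edges, the right-hand side of (\ref{371}) is path-independent, the conclusion holds for the pair $(F,I),(F',I')$. Hence $\{\eta_a\}_{a \in \Gamma}$ is a component weight function.

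\medskip

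\noindent The only subtlety — and the step I'd be most careful about — is the bookkeeping of signs when a path edge is traversed downward: one must check that the edge-weight relation (\ref{373}), which is stated for transferring a minimal element of a filter to the ideal, correctly produces the $\Delta_b$-difference in both directions. This is handled cleanly by the skew-symmetry of $\Delta_b$ noted after Definition \ref{ComponentWeightFunction} and by Remark \ref{369}(b), so no genuinely new computation is needed; the rest is the routine induction-plus-concatenation argument enabled by Lemma \ref{376}.
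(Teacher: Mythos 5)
Your proposal is correct and follows essentially the same route as the paper: the component-to-edge direction is the specialization via Remark \ref{369}, and the edge-to-component direction handles single edges (up or down, using the skew-symmetry of $\Delta_b$) and then concatenates along a path with Lemma \ref{376}(b) --- your explicit induction on path length is just the paper's ``apply Lemma \ref{376}(b) $\ell-1$ times'' made formal. The only blemish is the garbled phrase ``$\theta_{\kappa(x),b} = -\theta_{\kappa(x),b}$ flipped by the sign of the step,'' which as written asserts $\theta = 0$; you clearly mean that traversing an edge downward negates both sides of Equation (\ref{373}), which is exactly how the paper handles it.
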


\begin{proof}
First suppose $\{\eta_a\}_{a \in \Gamma}$ is a component weight function.  Let $b \in \Gamma$ and $(F,I) \in \mathcal{FI}(P)$ and suppose $x$ is minimal in $F$.  Using Equation (\ref{371}) with $(F',I') = (F-x,I+x)$ and then applying Remark \ref{369}(a) produces Equation (\ref{373}).  Hence $\{\eta_a\}_{a \in \Gamma}$ is an edge weight function.  

Now suppose $\{\eta_a\}_{a \in \Gamma}$ is an edge weight function.  Let $b \in \Gamma$ and suppose $(F,I)$ and $(F',I')$ are in the same component of $\mathcal{FI}(P)$.  
If $(F,I) = (F',I')$, then both sides of Equation (\ref{371}) vanish.
Next
suppose $(F,I)$ and $(F',I')$ are neighbors in $\mathcal{FI}(P)$.  If $(F,I) \to (F',I')$, then $(F',I') = (F-x,I+x)$ for some $x$ minimal in $F$.  Applying Equation (\ref{373}) followed by Remark \ref{369}(a) produces Equation (\ref{371}).  If $(F',I') \to (F,I)$, then a similar argument gives a variant of (\ref{371}) with the roles of $(F,I)$ and $(F',I')$ reversed.  Multiplying by $-1$ then produces (\ref{371}) since $-\Delta_d[(F,I),(F',I')] = \Delta_d[(F',I'),(F,I)]$ for all $d \in \Gamma$.  
Thus (\ref{371}) holds for all pairs of neighbors in $\mathcal{FI}(P)$.
In general, choose a path of length $\ell$ from $(F,I)$ to $(F',I')$ in $\mathcal{FI}(P)$.
Since Equation (\ref{371}) holds for all pairs of neighbors in $\mathcal{FI}(P)$, we may apply Lemma \ref{376}(b) a total of $\ell - 1$ times along 
this path
to produce Equation (\ref{371}) for $(F,I)$ and $(F',I')$.  Thus $\{\eta_a\}_{a \in \Gamma}$ is a component weight function.
\end{proof}

Now we show a component weight function $\{\eta_a\}_{a \in \Gamma}$ always exists.  Fix $b \in \Gamma$, a component $\mc{C}$, and any split $(F_0,I_0) \in \mc{C}$.  Let $\eta_b(F_0,I_0)$ be any complex number.  Then for $(F,I) \in \mc{C}$, define
\begin{align}\label{375} 
\eta_b(F,I) := \eta_b(F_0,I_0) + 2\Delta_b[(F,I),(F_0,I_0)] - \sum_{c \adj b} \Delta_c[(F,I),(F_0,I_0)].
\end{align}
\ni For each $b \in \Gamma$, make such choices for all components and then perform this construction.

\begin{Lem}\label{382} (a) The $\{\eta_a\}_{a \in \Gamma}$ defined by (\ref{375}) is a component weight function. 
\vs
\be[(a),nosep]
\setcounter{enumi}{1}
\vspace{-.1in}
\item Each $\eta_a$ in some component weight function $\{\eta_a\}_{a \in \Gamma}$ is uniquely determined by its value on one split for each component. \ee
\end{Lem}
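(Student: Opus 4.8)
The plan is to prove (a) directly from the definition (\ref{375}) by substituting into the right-hand side of (\ref{371}) and simplifying with the additivity of the $\Delta_b$ quantities established in Lemma \ref{376}(a). Specifically, for a fixed $b \in \Gamma$ and two splits $(F,I), (F',I')$ in a common component $\mc{C}$ with base split $(F_0, I_0)$, I would write out $\eta_b(F',I') - \eta_b(F,I)$ using (\ref{375}): the two $\eta_b(F_0,I_0)$ terms cancel, leaving $2\bigl(\Delta_b[(F',I'),(F_0,I_0)] - \Delta_b[(F,I),(F_0,I_0)]\bigr) - \sum_{c \adj b}\bigl(\Delta_c[(F',I'),(F_0,I_0)] - \Delta_c[(F,I),(F_0,I_0)]\bigr)$. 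By Lemma \ref{376}(a), applied with the middle split $(F,I)$, we have $\Delta_b[(F',I'),(F_0,I_0)] = \Delta_b[(F',I'),(F,I)] + \Delta_b[(F,I),(F_0,I_0)]$, so $\Delta_b[(F',I'),(F_0,I_0)] - \Delta_b[(F,I),(F_0,I_0)] = \Delta_b[(F',I'),(F,I)]$, and likewise for each $c \adj b$. Substituting yields exactly the right-hand side of (\ref{371}), so $\{\eta_a\}_{a \in \Gamma}$ is a component weight function. (One should note in passing that (\ref{375}) is well-defined: the quantity $\Delta_b[(F,I),(F_0,I_0)]$ does not depend on the choice of path, as recorded before the definition of $\Delta_b$.)

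For (b), suppose $\{\eta_a\}_{a \in \Gamma}$ is any component weight function and that for each component $\mc{C}$ we are given its value on a single split $(F_0^{\mc{C}}, I_0^{\mc{C}}) \in \mc{C}$. I would fix $a \in \Gamma$ and an arbitrary split $(F,I)$, lying in some component $\mc{C}$; then applying the defining equation (\ref{371}) with the pair $(F_0^{\mc{C}}, I_0^{\mc{C}})$ and $(F,I)$ expresses $\eta_a(F,I)$ in terms of $\eta_a(F_0^{\mc{C}},I_0^{\mc{C}})$ and the numbers $\Delta_d[(F,I),(F_0^{\mc{C}},I_0^{\mc{C}})]$, which depend only on the colored poset $P$ and not on $\eta$. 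Hence $\eta_a(F,I)$ is forced, proving uniqueness.

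I do not expect any serious obstacle here; the only things to be careful about are (i) invoking the path-independence of $\Delta_b$ to see that (\ref{375}) is well-defined before calling it a function at all, and (ii) keeping the bookkeeping of which split plays the role of the "middle" argument straight when applying Lemma \ref{376}(a), since $\Delta$ is antisymmetric in its two arguments. Everything else is a short substitution.
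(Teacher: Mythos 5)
Your proposal is correct and follows essentially the same route as the paper: the paper chains Equation (\ref{371}) through the base split $(F_0,I_0)$ via Lemma \ref{376}(b), while you unwind that same telescoping explicitly using Lemma \ref{376}(a), which is what the proof of \ref{376}(b) amounts to anyway. Part (b) is handled identically in both, by noting that (\ref{375}) forces the value of $\eta_a$ at every split once the value at the chosen base split of each component is fixed.
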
 

\bp
Fix $b \in \Gamma$, a component $\mc{C}$, and the split $(F_0,I_0)$ chosen for $\mc{C}$.  Let $(F,I),(F',I') \in \mc{C}$.  Using (\ref{375}) we know Equation (\ref{371}) holds for the pairs $(F,I),(F_0,I_0)$ and $(F',I'),(F_0,I_0)$.  Since (\ref{371}) holds also for the pair $(F_0,I_0),(F',I')$, Lemma \ref{376}(b) shows it holds for $(F,I),(F',I')$.  Thus we get (a).  We get (b) since each $\eta_a$ in a given $\{\eta_a\}_{a \in \Gamma}$ must satisfy (\ref{375}) once its values are specified on one split from each component.  
\ep



We name the diagonal operators whose $\mathfrak{h}'$-weights are component (or edge) weight functions.

\begin{Def}
We say that a $\Gamma$-set of diagonal operators $\{H_a\}_{a \in \Gamma}$ on $\langle \mathcal{FI}(P) \rangle$ are \emph{component} (respectively \emph{edge}) \emph{diagonal operators} if their $\mathfrak{h}'$-weight $\{\eta_a\}_{a \in \Gamma}$ is a component (respectively edge) weight function.
\end{Def}

\ni These are the diagonal operators needed to satisfy HX (or HY) when $P$ satisfies the property EC.  
We will typically use edge weight functions and edge diagonal operators and apply Proposition \ref{377} as needed.




\begin{Prop} \label{378}
Suppose $P$ satisfies EC.  Let $\{H_a\}_{a \in \Gamma}$ be diagonal operators with $\mf{h}'$-weight $\{\eta_a\}_{a \in \Gamma}$.  Then the following are equivalent:
\be[(i),nosep]
\item The operators $\{X_a,H_a\}_{a \in \Gamma}$ satisfy HX.
\item The operators $\{H_a\}_{a \in \Gamma}$ are edge diagonal operators.
\item The operators $\{Y_a,H_a\}_{a \in \Gamma}$ satisfy HY. 
\ee
\end{Prop}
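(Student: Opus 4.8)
The plan is to prove the equivalence of (i), (ii), (iii) by showing (i) $\Leftrightarrow$ (ii) and then noting that (ii) $\Leftrightarrow$ (iii) follows by a completely dual argument (interchanging the roles of filters/ideals, minimal/maximal elements, and $X_a$/$Y_a$), exactly as in the earlier lemmas of this section. So the whole content is in the equivalence of (i) and (ii).

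First I would unwind what the relations HX say for our operators. Recall the condensed form $[H_b, X_a] = \theta_{ab} X_a$ for all $a, b \in \Gamma$. So condition (i) is: for every $a, b \in \Gamma$ and every split $(F,I)$, we have $(H_b X_a - X_a H_b).\langle F, I \rangle = \theta_{ab} X_a . \langle F, I \rangle$. Fix $a, b$ and a split $(F,I)$. By the property EC and Remark \ref{ECcomment}, $X_a.\langle F, I\rangle$ is either zero or a single term $\langle F - x, I + x\rangle$ where $x$ is the unique minimal element of $F$ of color $a$ (if one exists). If $X_a.\langle F,I\rangle = 0$ the desired identity reads $H_b.0 = 0$, which is automatic; so the only splits that matter are those possessing a minimal element $x \in F$ with $\kappa(x) = a$. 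For such a split, using that $H_b$ acts diagonally with eigenvalue $\eta_b$, the left-hand side evaluates to $\bigl(\eta_b(F - x, I + x) - \eta_b(F, I)\bigr)\langle F - x, I + x\rangle$, while the right-hand side is $\theta_{\kappa(x), b}\langle F - x, I + x\rangle$. Hence (i) holds for $(a,b,(F,I))$ if and only if $\eta_b(F-x, I+x) - \eta_b(F,I) = \theta_{\kappa(x),b}$.

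Now I would assemble these pointwise statements into the global equivalence. If (ii) holds, then $\{\eta_a\}_{a\in\Gamma}$ is an edge weight function, so Equation (\ref{373}) holds for every $b \in \Gamma$, every split $(F,I)$, and every minimal element $x \in F$; in particular it holds whenever $\kappa(x) = a$, which by the previous paragraph gives (i). Conversely, if (i) holds, then for every $b \in \Gamma$, every split $(F,I)$, and every minimal element $x \in F$, applying (i) with $a := \kappa(x)$ yields exactly Equation (\ref{373}); since $x$ ranges over all minimal elements of all filters as $(F,I)$ and $a$ vary, $\{\eta_a\}_{a\in\Gamma}$ satisfies Definition \ref{EdgeWeight} and is therefore an edge weight function, i.e. (ii) holds. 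This establishes (i) $\Leftrightarrow$ (ii). The dual argument — replacing $X_a$ by $Y_a$, minimal elements of $F$ by maximal elements of $I$, the transfer $(F,I) \to (F-x, I+x)$ by $(F+y, I-y) \to (F,I)$, and using the condensed form $[H_b, Y_a] = -\theta_{ab} Y_a$ together with Remark \ref{369}(a)-style bookkeeping applied downward — gives (ii) $\Leftrightarrow$ (iii). Actually, the cleanest packaging is to observe that (iii) for $\{Y_a, H_a\}$ relative to the poset $P$ is literally (i) for $\{X_a, H_a\}$ relative to the dual poset $P^*$ (with the same $\{\eta_a\}$, noting an edge weight function for $P$ restricts to an edge weight function for $P^*$ by the sign reversal $-\Delta_d = \Delta_d$ of Lemma \ref{376}), so no new work is needed.

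I do not anticipate a genuine obstacle here: the proof is a direct unwinding of the definitions, and the key observation is simply that HX, evaluated against a single basis vector, localizes to the single edge $(F,I) \to (F-x, I+x)$ because of EC, matching Definition \ref{EdgeWeight} term by term. The only point requiring a little care is making sure the quantifiers line up — that "for all $a,b$ and all splits" in HX is the same data as "for all $b$, all splits, and all minimal $x \in F$" in the edge weight function condition, via the substitution $a = \kappa(x)$ — and that the trivial cases where $X_a.\langle F,I\rangle = 0$ impose no constraint and are consistent with (ii). Handling the dual direction without re-deriving everything, by invoking the $P \mapsto P^*$ symmetry, keeps the write-up short.
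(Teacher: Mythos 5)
Your proposal is correct and follows essentially the same route as the paper: evaluate HX on a single basis vector, use EC to reduce $X_a.\la F,I\ra$ to at most one term, and observe that the resulting eigenvalue difference $\eta_b(F-x,I+x)-\eta_b(F,I)=\theta_{\kappa(x),b}$ is exactly Equation (\ref{373}), with the (ii) $\Leftrightarrow$ (iii) equivalence handled by duality. The paper writes the two implications out separately rather than as a single pointwise equivalence, but the content is identical.
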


\bp
Suppose (i) holds.  Let $b \in \Gamma$ and $(F,I) \in \mathcal{FI}(P)$, and suppose $x$ is a minimal element of $F$.  Set $a := \kappa(x)$.  By EC we have $X_a.\la F,I \ra = \la F-x,I+x \ra$.  Then HX gives $\theta_{ab}X_a.\la F,I \ra = (H_bX_a - X_aH_b).\la F,I \ra = (\eta_b(F-x,I+x) - \eta_b(F,I))X_a.\la F,I \ra$, so $\eta_b(F-x,I+x) - \eta_b(F,I) = \theta_{ab}$.  Thus $\{\eta_a\}_{a \in \Gamma}$ is an edge weight function, and so (ii) holds.

Now suppose (ii) holds and let $a,b \in \Gamma$ and $(F,I) \in \mathcal{FI}(P)$.  
Note that $(H_bX_a - X_aH_b).\la F,I \ra = \theta_{ab}X_a. \la F,I \ra$ is trivial if $F$ does not have a minimal element of color $a$, so suppose $x$ is minimal in $F$ of color $a$.  By EC we have $X_a.\la F,I \ra = \la F-x,I+x \ra$.  Then $(H_bX_a - X_aH_b).\la F,I \ra = (\eta_b(F-x,I+x) - \eta_b(F,I))X_a.\la F,I \ra = \theta_{ab} X_a. \la F,I \ra$, where the last equality follows from 
Equation (\ref{373}).  Thus (i) holds.

The equivalence of (ii) and (iii) follows from a dualized argument.
\ep


We conclude this section by applying our results to representations.

\begin{Prop} \label{380}
Suppose $P$ satisfies EC.
\be[(a),nosep]
\item If $\mc{FI}(P)$ carries a representation of $\mf{b}'_+$ or of $\mf{b}'_-$, then the diagonal operators $\{H_a\}_{a \in \Gamma}$ giving the actions of $\{h_a\}_{a \in \Gamma}$ are edge diagonal operators.
\item Suppose $P$ additionally satisfies NA and I3ND.  If $\{\eta_a\}_{a \in \Gamma}$ is an edge weight function, then the corresponding edge diagonal operators $\{H_a\}_{a \in \Gamma}$ can be used to extend the representations of Theorem \ref{351} from $\mf{n}_+$ and $\mf{n}_-$ to $X$- and $Y$-square nilpotent representations of $\mf{b}'_+$ and $\mf{b}'_-$.
\ee 
\end{Prop}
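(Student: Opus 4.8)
The plan is to prove the two parts of Proposition \ref{380} essentially as consequences of what has already been established, with part (b) being the substantive one. For part (a), suppose $\mc{FI}(P)$ carries a representation of $\mf{b}'_+$. By Definition \ref{Carries}(i) the operators giving the actions of the $\{x_a\}_{a \in \Gamma}$ are the color raising operators $\{X_a\}_{a \in \Gamma}$, by (ii) the operators $\{H_a\}_{a \in \Gamma}$ giving the actions of the $\{h_a\}_{a \in \Gamma}$ are diagonal with respect to the split basis, and by (iii) these operators satisfy all the defining relations of $\mf{b}'_+$, in particular the relations HX. Since $P$ satisfies EC, we may invoke Proposition \ref{378}: the implication (i)$\Rightarrow$(ii) there shows that the $\{H_a\}_{a \in \Gamma}$ are edge diagonal operators. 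The case of $\mf{b}'_-$ is identical, using relations HY and the implication (iii)$\Rightarrow$(ii) of Proposition \ref{378}.

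For part (b), assume $P$ satisfies EC, NA, and I3ND, and let $\{\eta_a\}_{a \in \Gamma}$ be an edge weight function with corresponding edge diagonal operators $\{H_a\}_{a \in \Gamma}$ on $\langle \mc{FI}(P) \rangle$. First I would note that such operators exist at all: Lemma \ref{382}(a) together with Proposition \ref{377} guarantees that component (hence edge) weight functions exist, so the hypothesis is not vacuous. Now I need to check that the collection $\{X_a, H_a\}_{a \in \Gamma}$ satisfies all the defining relations of $\mf{b}'_+$, namely XX, HH, and HX. Since NA implies ND (a neighbor pair with adjacent colors automatically has distinct colors, as $\Gamma$ is a simple graph with no loops), Theorem \ref{351} applies: EC, NA, and I3ND give that $\mc{FI}(P)$ carries an $X$-square nilpotent representation of $\mf{n}_+$, so the operators $\{X_a\}_{a \in \Gamma}$ satisfy the relations XX and are square nilpotent. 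The relation HH holds automatically for diagonal operators, as noted in the Remark following Definition \ref{MinReps} (or just because diagonal operators commute). Finally, because $\{\eta_a\}_{a \in \Gamma}$ is an edge weight function and $P$ satisfies EC, Proposition \ref{378} (implication (ii)$\Rightarrow$(i)) shows that $\{X_a, H_a\}_{a \in \Gamma}$ satisfies HX. Thus all of XX, HH, HX hold, so by Definition \ref{Carries} the lattice $\mc{FI}(P)$ carries a representation of $\mf{b}'_+$; it agrees with the representation of $\mf{n}_+$ from Theorem \ref{351} on the generators $\{x_a\}_{a \in \Gamma}$, and it is $X$-square nilpotent by Theorem \ref{351}. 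The extension to $\mf{b}'_-$ is the dual argument: use the $Y$-square nilpotent representation of $\mf{n}_-$ from Theorem \ref{351}, the relations YY, HH, HY, and the implication (ii)$\Rightarrow$(iii) of Proposition \ref{378}.

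I do not anticipate a serious obstacle here; the proposition is a packaging result that assembles Theorem \ref{351}, Proposition \ref{378}, and the existence of weight functions (Lemma \ref{382}, Proposition \ref{377}). The only point requiring a moment of care is the bookkeeping observation that NA implies ND, so that the hypotheses of Theorem \ref{351} are met by EC, NA, I3ND; this is flagged in the excerpt right before the statement of Theorem \ref{351} (``Since NA implies ND''). A second minor point is to be explicit that extending a representation of $\mf{n}_+$ to one of $\mf{b}'_+$ means only adjoining the already-specified diagonal operators $\{H_a\}_{a \in \Gamma}$ and checking the additional relations HH and HX, since $\mf{b}'_+ = \mf{h}' + \mf{n}_+$ is generated by $\{x_a, h_a\}_{a \in \Gamma}$. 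With those remarks in place the proof is a short citation-driven argument.
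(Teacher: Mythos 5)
Your proposal is correct and follows essentially the same route as the paper: both parts are obtained by citing Proposition \ref{378} (together with Theorem \ref{351} and the automatic relation HH for part (b)), which is exactly how the paper argues, just stated there more tersely. The extra bookkeeping you include (NA implies ND, existence of edge weight functions via Lemma \ref{382} and Proposition \ref{377}) is consistent with the paper's surrounding discussion and introduces no divergence.
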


\begin{proof}
For (a), the relations HX (or HY) are satisfied by assumption.  Thus (a) holds by Proposition \ref{378}.  Part (b) is also a consequence of Proposition \ref{378} since that result guarantees HX and HY hold.  
\end{proof}

Using the existence obtained in Lemma \ref{382}(a), we can extend Theorem \ref{351}.

\begin{Thm} \label{388}
The following are equivalent:
\be[(i),nosep]
\item The lattice $\mc{FI}(P)$ carries an $X$-square nilpotent representation of $\mf{n}_+$.
\item The lattice $\mc{FI}(P)$ carries an $X$-square nilpotent representation of $\mf{b}'_+$.
\item The properties EC, NA, and I3ND are satisfied by $P$.
\item The lattice $\mc{FI}(P)$ carries a $Y$-square nilpotent representation of $\mf{b}'_-$.
\item The lattice $\mc{FI}(P)$ carries a $Y$-square nilpotent representation of $\mf{n}_-$.
\ee
\ni Any operators $\{H_a\}_{a \in \Gamma}$ used to satisfy Part (ii) or Part (iv) are edge diagonal operators.
\end{Thm}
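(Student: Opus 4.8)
The plan is to establish the cycle of implications $(i)\Leftrightarrow(iii)\Leftrightarrow(iv)\Leftrightarrow(v)$ using Theorem \ref{351} and Proposition \ref{378}, noting that Theorem \ref{351} already gives $(i)\Leftrightarrow(iii)\Leftrightarrow(v)$ directly. So the genuinely new content is the equivalence of $(ii)$ with the others, together with the concluding sentence about edge diagonal operators. I would prove $(iii)\Rightarrow(ii)$ and $(ii)\Rightarrow(i)$, which closes the loop.

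For $(iii)\Rightarrow(ii)$: assume EC, NA, and I3ND. By Theorem \ref{351} (or Lemmas \ref{323}, \ref{341}, \ref{346}) the operators $\{X_a\}_{a\in\Gamma}$ give an $X$-square nilpotent representation of $\mf{n}_+$ carried by $\mc{FI}(P)$. Since EC holds, Lemma \ref{382}(a) produces a component weight function $\{\eta_a\}_{a\in\Gamma}$, which by Proposition \ref{377} is an edge weight function; let $\{H_a\}_{a\in\Gamma}$ be the corresponding edge diagonal operators. Then Proposition \ref{380}(b) — whose hypotheses EC, NA, I3ND are exactly $(iii)$ — says these $\{H_a\}_{a\in\Gamma}$ extend the $\mf{n}_+$-representation to an $X$-square nilpotent representation of $\mf{b}'_+$. (Concretely: XX holds since we started from a representation of $\mf{n}_+$; HH holds automatically for diagonal operators; HX holds by Proposition \ref{378} applied to edge diagonal operators under the hypothesis EC.) This gives $(ii)$. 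The implication $(ii)\Rightarrow(i)$ is immediate: restricting a representation of $\mf{b}'_+$ to the subalgebra $\mf{n}_+$ generated by $\{x_a\}_{a\in\Gamma}$ leaves the operators $\{X_a\}_{a\in\Gamma}$ unchanged, so they still satisfy XX and are still square nilpotent. Symmetrically $(iv)\Rightarrow(v)$, and the dual of the above argument gives $(iii)\Rightarrow(iv)$; since $(i)\Leftrightarrow(iii)\Leftrightarrow(v)$ by Theorem \ref{351}, all five statements are equivalent.

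For the final sentence: suppose $\{H_a\}_{a\in\Gamma}$ are diagonal operators used to witness $(ii)$ (or $(iv)$). Then in particular they satisfy HX (respectively HY) together with the $\{X_a\}_{a\in\Gamma}$ (respectively $\{Y_a\}_{a\in\Gamma}$). Since any of the equivalent conditions forces EC to hold (it appears in $(iii)$), Proposition \ref{378} applies and tells us that satisfying HX (or HY) is equivalent to $\{H_a\}_{a\in\Gamma}$ being edge diagonal operators. Hence any such $\{H_a\}_{a\in\Gamma}$ is a $\Gamma$-set of edge diagonal operators, as claimed. The main obstacle, such as it is, is purely bookkeeping: making sure the hypothesis EC needed by Propositions \ref{378} and \ref{380} is available, which it is because EC is part of condition $(iii)$ and $(iii)$ is equivalent to every other listed condition via Theorem \ref{351}. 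No new combinatorial input beyond that already packaged in Sections \ref{317} and \ref{361} is required.
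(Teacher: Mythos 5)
Your proposal is correct and follows essentially the same route as the paper: Lemma \ref{382}(a) plus Proposition \ref{377} to obtain an edge weight function, Proposition \ref{380}(b) for (iii)$\Rightarrow$(ii), restriction for (ii)$\Rightarrow$(i), Theorem \ref{351} for (i)$\Leftrightarrow$(iii)$\Leftrightarrow$(v), and dualization for (iv). The only cosmetic difference is that for the final sentence you cite Proposition \ref{378} directly where the paper cites Proposition \ref{380}(a), which is itself an immediate consequence of Proposition \ref{378}.
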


\bp 
Using Lemma \ref{382}(a), let $\{\eta_a\}_{a \in \Gamma}$ be any component weight function.  Then $\{\eta_a\}_{a \in \Gamma}$ is an edge weight function by Proposition \ref{377}.  Hence (iii) implies (ii) by Proposition \ref{380}(b) using $\{\eta_a\}_{a \in \Gamma}$.  Also (ii) implies (i) by restricting to the operators $\{X_a\}_{a \in \Gamma}$, and (i) implies (iii) by Theorem \ref{351}.  The equivalence of (iii), (iv), and (v) follows from a dualized argument.  The last statement follows from Proposition \ref{380}(a).
\ep 

\begin{Rmk}
If one does not care about the relationship between combinatorial properties and weights, Theorem \ref{388} says that a representation of $\mf{n}_+$ carried by $\mc{FI}(P)$ can be extended to $\mf{b}'_+$ without requiring coloring properties for $P$ beyond EC, NA, and I3ND.  One first creates a component weight function $\{\eta_a\}_{a \in \Gamma}$ by choosing for each component of $\mc{FI}(P)$ any $\Gamma$-set of complex numbers and any split.  Then $\{\eta_a\}_{a \in \Gamma}$ is also an edge weight function.  The corresponding edge diagonal operators $\{H_a\}_{a \in \Gamma}$ are then used to extend the action of $\mf{n}_+$.
\end{Rmk}

\section{A combinatorially constructed edge weight function}\label{414}

\ni We continue to assume $P$ satisfies EC.  Here we construct a particular weight function $\{\mu_a\}_{a \in \Gamma}$ on $\mc{FI}(P)$ whose values are determined by the local structure of $P$.  When $P$ has two new additional properties beyond EC, in Proposition \ref{423}(b) we show that $\{\mu_a\}_{a \in \Gamma}$ is an edge weight function.  As we work toward obtaining the (upper) $P$-minuscule representations of $\mf{g}'$ (and $\mf{b}'_+$), in the next two sections we will obtain relationships between further coloring properties and this $\mf{h}'$-weight.

We prepare to define our new $\mb{Z}$-valued weight function $\{\mu_a\}_{a \in \Gamma}$.  
To construct this weight function, we first introduce $\mathbb{N}$-valued auxiliary functions $\{\upsilon_a\}_{a \in \Gamma}$ and $\{\psi_a\}_{a \in \Gamma}$. 

\begin{Def} \label{Upsilon}
Fix $b \in \Gamma$.  We define $\upsilon_b : \mc{FI}(P) \to \mb{N}$ in stages.  Let $(F,I)$ be a split.  If $P_b \cap I$ does not have a maximal element, then set $\upsilon_b(F,I) := 1$.  Now suppose that $P_b \cap I$ has a maximal element $y$.  By EC the element $y$ is unique.  We build up a set $\Upsilon_b(F,I) \subseteq I$ from the empty set $\emptyset$.  Let $z \in I$.  We place $z$ into $\Upsilon_b(F,I)$ if it meets the following three requirements:
\be[(i),nosep]
\item The element $z$ is greater than $y$, 
\item Its color $c := \kappa(z)$ is adjacent to $b$, and
\item The number of elements greater than $y$ that are in $P_c \cap I$ is finite.
\ee
\ni If there is some color $a \adj b$ such that there are infinitely many elements greater than $y$ in $P_a \cap I$, then set $\upsilon_b(F,I) := |\Upsilon_b(F,I)| + 1$.  Otherwise set $\upsilon_b(F,I) := |\Upsilon_b(F,I)|$.
\end{Def}

Figure \ref{420} illustrates the three possible scenarios for computing $\{\upsilon_i\}_{i \in \Gamma}$ on a $\Gamma$-colored poset $P$ at a split $(F,I)$.  Since there is no maximal element of color $d$ in $I$, we have $\upsilon_d(F,I) = 1$.  Because $g \adj d$, we get $\upsilon_g(F,I) = |\Upsilon_g(F,I)| + 1 = 2$.  Lastly, we have $\upsilon_a(F,I) = |\Upsilon_a(F,I)| = 0$.


\begin{Def} \label{Psi}
Fix $b \in \Gamma$.  We define $\psi_b : \mc{FI}(P) \to \mb{N}$ in dually analogous stages.  Let $(F,I)$ be a split.  If $P_b \cap F$ does not have a minimal element, then set $\psi_b(F,I) := 1$.  Now suppose that $P_b \cap F$ has a minimal element $y$.  By EC the element $y$ is unique.  We build up another set $\Psi_b(F,I) \subseteq F$ from the empty set $\emptyset$.  Let $z \in F$.  We place $z$ into $\Psi_b(F,I)$ if it meets the following three requirements:
\be[(i),nosep]
\item The element $z$ is less than $y$, 
\item Its color $c := \kappa(z)$ is adjacent to $b$, and
\item The number of elements less than $y$ that are in $P_c \cap F$ is finite.
\ee
\ni If there is some color $a \adj b$ such that there are infinitely many elements less than $y$ in $P_a \cap F$, then set $\psi_b(F,I) := |\Psi_b(F,I)| + 1$.  Otherwise set $\psi_b(F,I) := |\Psi_b(F,I)|$.
\end{Def}

\ni Since $\Gamma$ is finite, Condition (iii) implies the sets $\Upsilon_b(F,I)$ and $\Psi_b(F,I)$ in the above definitions are finite.  

We now define the $\mathbb{Z}$-valued weight function $\{\mu_a\}_{a \in \Gamma}$ and its corresponding set of diagonal operators.

\begin{figure}[t!]
    \centering
    \begin{tikzpicture}[scale=1.1]
        \node at (0,0){$\bullet$};
        \node at (1,.75){$\bullet$};
        \node at (2,1.5){$\bullet$};
        \node at (3,2.25){$\bullet$};
        \node at (4,3){$\bullet$};
        \node at (5,3.75){$\bullet$};
        \node at (-1,.75){$\bullet$};
        \node at (-2,1.5){$\bullet$};
        \node at (-3,2.25){$\bullet$};
        \node at (-4,3){$\bullet$};
        \node at (-5,3.75){$\bullet$};
        
        \node at (5.75,4.3125){$\cdot$};
        \node at (6,4.5){$\cdot$};
        \node at (6.25,4.6875){$\cdot$};
        
         \node at (-5.75,4.3125){$\cdot$};
        \node at (-6,4.5){$\cdot$};
        \node at (-6.25,4.6875){$\cdot$};
        
        \draw (5.5,4.125) -- (0,0) -- (-5.5,4.125);
        
        \node at (0,5){$\bullet$};
        \node at (1,5){$\bullet$};
        \node at (-1,5){$\bullet$};
        \node at (2,4.5){$\bullet$};
        \node at (2,5.5){$\bullet$};
        \node at (-2,4.5){$\bullet$};
        \node at (-2,5.5){$\bullet$};
        
        \node at (0,4.8){$g$};
        \node at (-1,4.8){$a$};
        \node at (-2.2,4.5){$b$};
        \node at (-2.2,5.5){$c$};
        \node at (1,4.8){$d$};
        \node at (2.2,4.5){$e$};
        \node at (2.2,5.5){$f$};
        
        \node at (0,.3){$g$};
        \node at (-1.3,.75){$a$};
        \node at (-2.3,1.5){$b$};
        \node at (-3.3,2.25){$c$};
        \node at (-4.3,3){$a$};
        \node at (-5.3,3.75){$b$};
        
        \node at (1.3,.75){$d$};
        \node at (2.3,1.5){$e$};
        \node at (3.3,2.25){$f$};
        \node at (4.3,3){$d$};
        \node at (5.3,3.75){$e$};
        
        \draw (-1,5) -- (-2,5.5) -- (-2,4.5) -- (-1,5) -- (1,5) -- (2,4.5) -- (2,5.5) -- (1,5);
        
        \draw[dashed] (-3,0) -- (3,4.5);
        
        \node at (-.6,2.25){$F \uparrow$};
        \node at (.5,2.25){$I \downarrow$};
    \end{tikzpicture}
    \vs 
    \caption{Simple graph $\Gamma$ and poset $P$ colored by $\Gamma$.  Split $(F,I)$ is used to illustrate computation of $\{\upsilon_i\}_{i \in \Gamma}$.}
    \label{420}
\end{figure}

\begin{Def} \label{Mu}
Let $(F,I)$ be a split and let $b \in \Gamma$.  If $P_b \cap I \ne \emptyset$, then define $\mu_b(F,I) := 1 - \upsilon_b(F,I)$.  If $P_b \cap I = \emptyset$, then define $\mu_b(F,I) := -1 + \psi_b(F,I)$.  Finally, define the $\Gamma$-set of operators $\{M_a\}_{a \in \Gamma}$ to be the diagonal operators with $\mf{h}'$-weight $\{\mu_a\}_{a \in \Gamma}$.  These are the \emph{$\mu$-diagonal operators}. 
\end{Def}

Foreshadowing Proposition \ref{458}(c), we see that $M_b.\la F,I \ra = +\la F,I \ra$ if $I$ has a maximal element of color $b$ since then $\upsilon_b(F,I) = 0$.  
\ni The definition of $\{\mu_a\}_{a \in \Gamma}$ is not symmetric with respect to $F$ and $I$; an alternate construction can be made for the $\mu$-diagonal operators that emphasizes filters instead of ideals.  Let $b \in \Gamma$ and $(F,I) \in \mc{FI}(P)$.  If $P_b \cap F = \emptyset$, then set $\mu'_b(F,I) := 1 - \upsilon_b(F,I)$.  If $P_b \cap F \ne \emptyset$, then set $\mu'_b(F,I) := -1 + \psi_b(F,I)$.  When $P$ satisfies EC, AC, and I2A (defined next), it can be shown that $\mu_b' = \mu_b$; see \cite[Prop. 4.1.1]{Str}.

\begin{Prop} \label{423}
Suppose $P$ satisfies EC and the following additional properties:
\bi[nosep]
\i (AC): Elements with adjacent colors are comparable, and
\i (I2A): For every $a \in \Gamma$: The open interval between any two consecutive elements of color $a$ contains exactly two elements whose colors are adjacent to $a$.
\ei
Then we have the following:
\begin{enumerate}[(a),nosep]
\item The property I2A implies both ND and I3ND.
\item The weight function $\{\mu_a\}_{a \in \Gamma}$ is an edge weight function.  
\item If $P$ also satisfies NA, then the $\mu$-diagonal operators $\{M_a\}_{a \in \Gamma}$ can be used to extend the representations of Theorem \ref{351} from $\mf{n}_+$ and $\mf{n}_-$ to $X$- and $Y$-square nilpotent representations of $\mf{b}'_+$ and $\mf{b}'_-$.
\end{enumerate}
\end{Prop}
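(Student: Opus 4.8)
The plan is to treat the three parts in sequence, with part~(b) doing the heavy lifting; parts~(a) and~(c) then follow quickly from it together with earlier results.

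For part~(a) I would argue by contradiction from the ``exactly two'' clause in I2A. If ND failed there would be neighbors $x \to y$ with $\kappa(x)=\kappa(y)=:a$; then $x$ and $y$ are consecutive elements of color $a$, yet the open interval $(x,y)$ between them is empty and so cannot contain exactly two elements whose colors are adjacent to $a$. Granting ND, suppose $x \to y \to z$ is an interval with $\kappa(x)=\kappa(z)$; then ND forces $\kappa(y) \ne \kappa(x)$, so $x$ and $z$ are consecutive elements of color $\kappa(x)$ with open interval $(x,z)=\{y\}$, which again contains only one element rather than two. Hence I3ND holds.

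For part~(b) I would verify Definition~\ref{EdgeWeight} directly, i.e.\ Equation~(\ref{373}). Fix $b \in \Gamma$, a split $(F,I)$, and a minimal element $x \in F$, and put $a:=\kappa(x)$; the goal is $\mu_b(F-x,I+x)-\mu_b(F,I)=\theta_{ab}$. Since $P$ satisfies EC, $P_a$ is a chain, $x$ is the minimum of $P_a \cap F$, and in the new split $x$ is the maximum of $P_a \cap (I+x)$ with nothing of $I+x$ above it. I would then split into the cases $b=a$, $b \adj a$, $b \dist a$, with subcases recording whether $P_b \cap I$ and $P_b \cap (I+x)$ are empty, and compute $\mu_b$ on each side from Definitions~\ref{Upsilon}, \ref{Psi}, and~\ref{Mu}. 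When $b=a$: on the new side $\upsilon_a(F-x,I+x)=0$, so $\mu_a(F-x,I+x)=1$; on the old side, if $P_a \cap I=\emptyset$ then $\psi_a(F,I)=0$ because $x$ is minimal in $F$, and if $P_a \cap I \ne \emptyset$ then with $y:=\max(P_a \cap I)$ the elements $x,y$ are consecutive elements of color $a$, AC forces every element of $I$ above $y$ whose color is adjacent to $a$ into the finite open interval $(y,x)$, and I2A then yields $\upsilon_a(F,I)=2$; either way $\mu_a(F,I)=-1$, so the difference is $2=\theta_{aa}$. When $b \adj a$, the two possibilities are $P_b \cap I \ne \emptyset$ and $P_b \cap (I+x)=\emptyset$: in the first, $y:=\max(P_b \cap I)=\max(P_b \cap (I+x))$, AC puts $x$ above $y$, local finiteness of $[y,x]$ bounds the number of elements of color $a$ above $y$, so $x$ joins $\Upsilon_b$ and nothing else moves, giving $\upsilon_b(F-x,I+x)=\upsilon_b(F,I)+1$; the second is the dual statement with $\psi_b$ and the minimum $y'$ of $P_b \cap F$, where $x$ leaves $\Psi_b$; in both subcases the difference is $-1=\theta_{ab}$. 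When $b \dist a$, the color $a$ of $x$ is neither $b$ nor adjacent to $b$, so adjoining or deleting $x$ alters neither $\Upsilon_b$ (nor $\Psi_b$) nor the ``infinitely many'' clause, whence $\mu_b$ is unchanged and the difference is $0=\theta_{ab}$. I expect this bookkeeping to be the main obstacle: the delicate points are using AC to locate $x$ relative to the extreme element of color $b$ in the ideal or filter, using local finiteness to freeze condition~(iii) of Definitions~\ref{Upsilon} and~\ref{Psi} and the ``$+1$'' clause, and invoking I2A in the diagonal subcase to pin $\upsilon_a(F,I)$ at $2$.

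For part~(c) I would assemble the pieces. By part~(a), I2A gives ND and I3ND, so together with the hypotheses EC and NA the conditions of Theorem~\ref{351} hold and $\mc{FI}(P)$ carries $X$- and $Y$-square nilpotent representations of $\mf{n}_+$ and $\mf{n}_-$. By part~(b), $\{\mu_a\}_{a \in \Gamma}$ is an edge weight function, so the $\mu$-diagonal operators $\{M_a\}_{a \in \Gamma}$ are edge diagonal operators. Since $P$ satisfies NA and I3ND, Proposition~\ref{380}(b) applied with $\{\eta_a\}_{a \in \Gamma}=\{\mu_a\}_{a \in \Gamma}$ then shows these operators extend the $\mf{n}_\pm$-representations to $X$- and $Y$-square nilpotent representations of $\mf{b}'_\pm$, as desired.
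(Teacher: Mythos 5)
Your proposal is correct and follows essentially the same route as the paper: part (a) by exhibiting a violating consecutive pair, part (b) by the same three-way case split on $b=a$, $b\adj a$, $b\dist b$ (with subcases on emptiness of $P_b\cap I$) using AC to trap the relevant elements in a finite interval and I2A to pin $\upsilon_a(F,I)=2$, and part (c) by combining (a), (b), Theorem \ref{351}, and Proposition \ref{380}(b). No gaps.
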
 


\bp 
Suppose $P$ fails either ND or I3ND.  Then $P$ contains an interval that is a chain consisting of two or three elements in which the minimal and maximal elements are consecutive elements of the same color.  The open interval between these two elements violates I2A. Thus (a) holds. 

To prove (b), 
fix $b \in \Gamma$.  Let $(F,I) \in \mc{FI}(P)$ and suppose $x$ is minimal in $F$.  Set $a := \kappa(x)$.  We must show that $\mu_b(F-x,I+x) - \mu_b(F,I) = \theta_{ab}$.

First suppose $a = b$.  We start with the case $P_a \cap I = \emptyset$.  Since $x$ is minimal in $F$, we have $\psi_a(F,I) = |\Psi_a(F,I)| = 0$ and $\mu_a(F,I) = -1$.  Here $P_a \cap (I+x) \ne \emptyset$.  Since $x$ is maximal in $I+x$, we have $\upsilon_a(F-x,I+x) = |\Upsilon_a(F-x,I+x)| = 0$ and $\mu_a(F-x,I+x) = 1$.  We get $\mu_a(F-x,I+x) - \mu_a(F,I) = 2 = \theta_{aa}$.  Otherwise we have the case $P_a \cap I \ne \emptyset$.  
Let $z \in P_a \cap I$.  Note that $z < x$ by EC and that $P_a \cap [z,x]$ is finite by local finiteness for $[z,x]$.  So $P_a \cap I$ has a maximal element $y$.
Here $y < x$ are consecutive occurrences of the color $a$.  By I2A there are exactly two elements $u,v \in (y,x)$ with colors adjacent to $a$.  By AC all elements greater than $y$ in $I$ with colors adjacent to $a$ are in $(y,x)$.  Hence $u$ and $v$ are the only such elements.  This shows both $u$ and $v$ are in $\Upsilon_a(F,I)$, and no other elements can be in $\Upsilon_a(F,I)$.  Thus $\upsilon_a(F,I) = |\Upsilon_b(F,I)| = 2$ and $\mu_a(F,I) = -1$.  We still have $\mu_a(F-x,I+x) = 1$, and so again $\mu_a(F-x,I+x) - \mu_a(F,I) = 2 = \theta_{aa}$.

Now suppose $a \adj b$.  We again start with the case $P_b \cap I = \emptyset$.  Here $P_b \cap (I+x) = \emptyset$ as well.  We know $P_b \cap F \ne \emptyset$ since $\kappa$ is surjective.  Let $z \in P_b \cap F$.  Note that $x < z$ by AC and that $[x,z]$ and $P_b \cap [x,z]$ are finite.  So $P_b \cap F$ has a minimal element $y$.  Then $x$ satisfies all three criteria to be in $\Psi_b(F,I)$, but $x \notin \Psi_b(F-x,I+x)$ since $x \notin F-x$.  Observe that $y$ is also minimal in $P_b \cap (F-x)$.  So we have $\Psi_b(F-x,I+x) = \Psi_b(F,I) - \{x\}$.  Thus $|\Psi_b(F-x,I+x)| = |\Psi_b(F,I)| - 1$.  Note that there is some color $c \adj b$ such that there are infinitely many elements less than $y$ in $P_c \cap F$ if and only if the same statement is true for $F-x$.  Thus whether or not such a color $c$ exists we have $\psi_b(F-x,I+x) = \psi_b(F,I) - 1$.  Here $\mu_b(F-x,I+x) = -1 + \psi_b(F-x,I+x)$ and $\mu_b(F,I) = -1 + \psi_b(F,I)$.  Thus $\mu_b(F-x,I+x) - \mu_b(F,I) = -1 = \theta_{ab}$.

Otherwise for $a \adj b$ we have the case $P_b \cap I \ne \emptyset$.  Here $P_b \cap (I + x) \ne \emptyset$ as well.  Let $z \in P_b \cap I$.  Note $z < x$ by AC and that $[z,x]$ and $P_b \cap [z,x]$ are finite.  So $P_b \cap I$ has a maximal element $y$.  Note that $x$ satisfies all three criteria to be in $\Upsilon_b(F-x,I+x)$, but $x \notin \Upsilon_b(F,I)$ since $x \notin I$.  Observe that $y$ is also maximal in $P_b \cap (I + x)$.  So we have $\Upsilon_b(F-x,I+x) = \Upsilon_b(F,I) \cup \{x\}$.  Thus $|\Upsilon_b(F-x,I+x)| = |\Upsilon_b(F,I)| + 1$.  Note that there is some color $c \adj b$ such that there are infinitely many elements greater than $y$ in $P_c \cap I$ if and only if the same statement is true for $I + x$.  Thus whether or not such a color $c$ exists we have $\upsilon_b(F-x,I+x) = \upsilon_b(F,I) + 1$.  Here $\mu_b(F-x,I+x) = 1 - \upsilon_b(F-x,I+x)$ and $\mu_b(F,I) = 1 - \upsilon_b(F,I)$.  Thus $\mu_b(F-x,I+x) - \mu_b(F,I) = -1 = \theta_{ab}$.

Finally suppose $a \dist b$.  We again start with the case $P_b \cap I = \emptyset$.  Here $P_b \cap (I + x) = \emptyset$ as well.  An element $y$ is minimal in $P_b \cap F$ if and only if it is minimal in $P_b \cap (F - x)$.  If no such minimal element exists, then $\psi_b(F,I) = 1 = \psi_b(F-x,I+x)$.  Otherwise there is an element $y$ minimal in both $P_b \cap F$ and $P_b \cap (F - x)$.  Note that there is a color $c \adj b$ such that there are infinitely many elements less than $y$ in $P_c \cap F$ if and only if the same statement is true for $P_c \cap (F-x)$.  Thus whether or not such a color $c$ exists, since $a \not \simeq b$ we have $\psi_b(F-x,I+x) = \psi_b(F,I)$.  Hence $\mu_b(F-x,I+x) - \mu_b(F,I) = 0 = \theta_{ab}$.  Otherwise for $a \dist b$ we have the case $P_b \cap I \ne \emptyset$.  In this case, we replace $\psi_b$ with $\upsilon_b$ and dualize to again get $\mu_b(F-x,I+x) - \mu_b(F,I) = 0 = \theta_{ab}$.

Thus $\{\mu_a\}_{a \in \Gamma}$ is an edge weight function, and so (b) holds.  Since I2A implies I3ND, we can apply Proposition \ref{380}(b) to get (c).
\ep

\section{Existence and uniqueness for $\boldsymbol{\mf{sl}_2}$ weights along color strings}\label{453} 

The actions of the $\{h_a\}_{a \in \Gamma}$ in a minuscule representation of a semisimple Lie algebra have certain values along their ``$\mf{sl}_2$ strings.''  To obtain upper $P$-minuscule representations of $\mf{b}'_+$ and $P$-minuscule representations of $\mf{g}'$, we need edge weight functions that have these values along the ``color strings'' of $\mc{FI}(P)$.  The next result is the first step toward obtaining these values.  This existence result motivates the properties AC and I2A from a Lie representation viewpoint.

\begin{Prop} \label{458} (a) The following are equivalent:
\begin{enumerate}[(a),nosep]
\item[] 
\be[(i),nosep]
\item There exists an edge weight function $\{\eta_a\}_{a \in \Gamma}$ such that for every $b \in \Gamma$ and every split $(F,I)$, we have $\eta_b(F,I) = -1$ if $b$ is the color of a minimal element of $F$.
\item The properties EC, AC, and I2A are satisfied by $P$.
\item There exists an edge weight function $\{\eta_a\}_{a \in \Gamma}$ such that for every $b \in \Gamma$ and every split $(F,I)$, we have $\eta_b(F,I) = +1$ if $b$ is the color of a maximal element of $I$.
\ee
\setcounter{enumi}{1}
\item Any choice of edge weight function satisfying Condition (i) will work for (iii) (and vice versa).
\item If Condition (ii) is satisfied, then the edge weight function $\{\mu_a\}_{a \in \Gamma}$ of Definition \ref{Mu} satisfies Conditions (i) and (iii). 
\end{enumerate}
\end{Prop}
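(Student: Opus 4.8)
The plan is to prove part~(b) first (it is self-contained and at once forces the equivalence of Conditions~(i) and~(iii) of part~(a)), then part~(c), and finally to assemble part~(a). The one mechanism behind everything is the edge weight relation~(\ref{373}) together with the value $\theta_{bb}=2$: along an edge of $\mc{FI}(P)$ transferring an element of color $c$, the function $\eta_b$ changes by $\theta_{cb}$.

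\emph{Part~(b).} Let $\{\eta_a\}_{a\in\Gamma}$ be an edge weight function satisfying Condition~(i), and let $(F,I)$ be a split with a maximal element $y\in I$ of color $b$. Then $(F+y,\,I-y)$ is a split in which $y$ is minimal in the filter, so Condition~(i) gives $\eta_b(F+y,I-y)=-1$; since the edge $(F+y,I-y)\to(F,I)$ transfers $y$, relation~(\ref{373}) gives $\eta_b(F,I)-\eta_b(F+y,I-y)=\theta_{bb}=2$, so $\eta_b(F,I)=+1$, i.e.\ Condition~(iii) holds. Conversely, if $\{\eta_a\}$ satisfies Condition~(iii) and $x\in F$ is minimal of color $b$, then $x$ is maximal in the ideal of $(F-x,\,I+x)$, so $\eta_b(F-x,I+x)=+1$, and $(F,I)\to(F-x,I+x)$ forces $\eta_b(F,I)=+1-\theta_{bb}=-1$; hence Condition~(i) holds. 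Read as existence statements, this makes Conditions~(i) and~(iii) of part~(a) equivalent.

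\emph{Part~(c).} Assume Condition~(ii). By Proposition~\ref{423}(b) the function $\{\mu_a\}_{a\in\Gamma}$ is an edge weight function, so by part~(b) it suffices to verify Condition~(i) for it. Fix $b\in\Gamma$ and a split $(F,I)$ with a minimal element $x\in F$ of color $b$. If $P_b\cap I=\emptyset$, then $P_b\subseteq F$ and $x$ is the minimal element of $P_b\cap F$ demanded by Definition~\ref{Psi}; since $x$ is minimal in $F$ there is nothing below it in $F$, so $\Psi_b(F,I)=\emptyset$ and no color contributes a ``$+1$'', whence $\psi_b(F,I)=0$ and $\mu_b(F,I)=-1$. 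If $P_b\cap I\neq\emptyset$, then (by EC, local finiteness, and the argument already used in the proof of Proposition~\ref{423}(b)) $P_b\cap I$ has a maximal element $y$, and $y<x$ are consecutive occurrences of the color $b$; by AC the elements of $I$ lying above $y$ with colors adjacent to $b$ all lie in $(y,x)$, and by I2A there are exactly two of them, so $\Upsilon_b(F,I)$ (Definition~\ref{Upsilon}) has two elements and again no adjacent color appears infinitely often above $y$; thus $\upsilon_b(F,I)=2$ and $\mu_b(F,I)=1-2=-1$. So $\{\mu_a\}$ satisfies Condition~(i), hence also Condition~(iii) by part~(b).

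\emph{Part~(a).} By part~(c), Condition~(ii) implies both Condition~(i) and Condition~(iii); and by part~(b) Condition~(iii) implies Condition~(i); so it remains only to prove Condition~(i)$\Rightarrow$Condition~(ii). Fix an edge weight function $\{\eta_a\}$ satisfying Condition~(i). Property EC holds by the standing assumption of this part of the paper (it is also recovered directly: if incomparable $x,y$ shared a color $a$, then in the filter generated by $\{x,y\}$ one has $\eta_a=-1$, but transferring $x$ downward gives $\eta_a=-1+\theta_{aa}=1$ while $y$ remains minimal, contradicting Condition~(i)). The same two-step maneuver on the filter generated by an incomparable pair $x,y$ with $\kappa(x)\adj\kappa(y)$ yields AC, since transferring $x$ changes $\eta_{\kappa(y)}$ by $\theta_{\kappa(x)\kappa(y)}=-1$. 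For I2A, let $y<x$ be consecutive elements of color $a$ and set $N=|\{w\in(y,x):\kappa(w)\adj a\}|$. Starting from the principal filter $F^{(0)}=\{z:z\ge y\}$, where $\eta_a=-1$, transfer $y$ upward and then transfer the elements of the \emph{finite} set $(y,x)$ upward one at a time, processing $(y,x)$ from the bottom up; each such element is minimal in the current filter because all of its lower bounds lying outside $(y,x)$ fail to sit above $y$. This finite path terminates at the split $(F^\ast,I^\ast)$ with $F^\ast=\{z:z\ge y,\ z\not<x\}$, which contains no element below $x$, so $x$ is minimal in $F^\ast$ and $\eta_a(F^\ast,I^\ast)=-1$ again by Condition~(i). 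But summing $\theta_{\kappa(\bullet),a}$ along the path gives $\eta_a(F^\ast,I^\ast)-\eta_a(F^{(0)},I^{(0)})=\theta_{aa}+\sum_{w\in(y,x)}\theta_{\kappa(w),a}=2-N$, since no $w\in(y,x)$ has color $a$. Therefore $N=2$, which is I2A. I expect this last argument to be the main obstacle: one must start from the \emph{principal} filter of $y$ (so that every split reached stays inside $\{z:z\ge y\}$ and the terminal filter provably has no element below $x$, without even invoking AC), check that $(y,x)$ can be peeled off one minimal element at a time, and observe that the whole path moves only the finite set $\{y\}\cup(y,x)$, so that the edge weight relation may legitimately be summed along it.
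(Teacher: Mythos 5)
Your proposal is correct and follows essentially the same route as the paper: the one-edge computation with $\theta_{bb}=2$ for (i)$\Leftrightarrow$(iii), the incomparable-pair-plus-one-transfer argument for EC and AC, and a net color count between consecutive occurrences of a color for I2A (you telescope Equation (\ref{373}) along an explicitly constructed path through the principal filter of $y$, where the paper equivalently invokes the component weight relation (\ref{371}) via Proposition \ref{377} on a principal ideal). The only other difference is cosmetic: in part (c) you verify Condition (i) for $\{\mu_a\}_{a\in\Gamma}$ directly, which forces you to redo the two-case analysis already present in the proof of Proposition \ref{423}(b), whereas the paper verifies Condition (iii) (where $\upsilon_b(F,I)=0$ immediately) and then appeals to part (b).
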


\begin{Rmk} 
For each $a \in \Gamma$, Condition (i) (or (iii)) above requires certain values for the function $\eta_a$ along the edges of color $a$ in $\mc{FI}(P)$.  But when $\mc{FI}(P)$ has a component that does not contain an edge of some color $b \in \Gamma$, those parts do not pertain to $\eta_b$ on that component.  For example, the component containing $(F,I)$ in Figure \ref{420} does not contain an edge colored $g$.  So there $\eta_g(F,I)$ can be any complex number.
\end{Rmk}

\bp[Proof of Proposition \ref{458}.]
For Part (a), we first show (iii) implies (i).  Create an edge weight function $\{\eta_a\}_{a \in \Gamma}$ that satisfies (iii).  Let $b \in \Gamma$ and $(F,I) \in \mc{FI}(P)$.  Suppose $b$ is the color of a minimal element $y$ of $F$.  Then $\eta_b(F-y,I+y) = 1$ by (iii) since $y$ is maximal in $I+y$.  
Thus Equation (\ref{373}) gives $\eta_b(F-y,I+y) - \eta_b(F,I) = 2$.  Hence we get $\eta_b(F,I) = \eta_b(F-y,I+y) - 2 = -1$, yielding (i).
A dual argument shows that (i) implies (iii).  
This also shows that one choice will work for both Parts (i) and (iii), so (b) holds.

We next show (iii) implies (ii).  Continue to consider the edge weight function $\{\eta_a\}_{a \in \Gamma}$ above.  Let $x$ and $y$ be incomparable elements in $P$.  Define $a := \kappa(x)$ and $b := \kappa(y)$.  Let $F$ be the filter generated by $x$ and $y$ and set $I := P - F$.  Note that $b$ is the color of a maximal element of both $I + y$ and $I + y + x$.  Thus by (iii) we have $\eta_b(F-y,I+y) = 1 = \eta_b(F - y - x, I + y + x)$, so $\eta_b(F-y-x,I+y+x) - \eta_b(F-y,I+y) = 0$.  Since $x$ is minimal in $F-y$, we can apply Equation (\ref{373}) to also obtain $\eta_b(F-y-x,I+y+x) - \eta_b(F-y,I+y) = \theta_{ab}$.  Thus $\theta_{ab} = 0$, and so $a \dist b$.  Thus we get both EC and AC.  Now let $b \in \Gamma$ and suppose that $x < y$ are consecutive occurrences of the color $b$.  Define $I'$ to be the principal ideal generated by $y$.  Define the ideal $I$ to be $I' - (x,y]$, where $(x,y] := \{z \in P \ | \ x < z \le y\}$; also note that $x$ is maximal in $I$ and that $I' - I = (x,y]$ and $I - I' = \emptyset$.  Define $F' := P - I'$ and $F := P - I$.  Since $(x,y]$ is finite, the splits $(F',I')$ and $(F,I)$ are in the same component of $\mc{FI}(P)$.  Since $y$ is maximal in $I'$ and $x$ is maximal in $I$, we have $\eta_b(F',I') = 1 = \eta_b(F,I)$.  
From Equation (\ref{371}), which we may use by Proposition \ref{377}, we get
\begin{align*}
0 = \eta_b(F',I') - \eta_b(F,I) &= 2\Delta_b[(F',I'),(F,I)] - \sum_{c \adj b} \Delta_c[(F',I'),(F,I)] \\
&= 2|P_b \cap (x,y]| - \sum_{c \adj b} |P_c \cap (x,y]|.
\end{align*}
\ni Since $\kappa(y) = b$, this equation can be rewritten $2 = \sum_{c \adj b} |P_c \cap (x,y)|$.  Thus I2A holds.

Now assume (ii) holds and consider the weight function $\{\mu_a\}_{a \in \Gamma}$ of Definition \ref{Mu}.  By Proposition \ref{423}(b) we know that $\{\mu_a\}_{a \in \Gamma}$ is an edge weight function.  Fix a color $b \in \Gamma$ and a split $(F,I) \in \mc{FI}(P)$, and suppose $b$ is the color of a maximal element of $I$.  Then we have $\upsilon_b(F,I) = 0$ and $\mu_b(F,I) = +1$, so (iii) holds.  Then by (b) we see that (c) holds.
\ep

We do get uniqueness for $\{\mu_a\}_{a \in \Gamma}$ on a component when this component has edges of all colors:

\begin{Cor} \label{463}
Suppose $P$ satisfies EC, AC, and I2A.  Fix a component $\mc{C}$ and suppose there is an edge in $\mc{C}$ of every color.  Then $\{\mu_a\}_{a \in \Gamma}$ is the unique restriction to $\mc{C}$ of an edge weight function that satisfies Conditions (i) or (iii) of Proposition \ref{458}(a).  So if $P$ is finite, then $\{\mu_a\}_{a \in \Gamma}$ is the unique edge weight function on $\mc{FI}(P)$ that satisfies these conditions.
\end{Cor}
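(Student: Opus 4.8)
The plan is to derive this from Proposition~\ref{458}(c) together with the rigidity of component weight functions recorded in Lemma~\ref{382}(b). For the existence half, observe that under EC, AC, and I2A Proposition~\ref{423}(b) makes $\{\mu_a\}_{a\in\Gamma}$ an edge weight function, and Proposition~\ref{458}(c) tells us it satisfies Conditions (i) and (iii) of Proposition~\ref{458}(a). So $\{\mu_a\}_{a\in\Gamma}$ is itself an edge weight function on all of $\mc{FI}(P)$ whose restriction to $\mc{C}$ has the required form, and it only remains to prove uniqueness on $\mc{C}$.

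For uniqueness, I would let $\{\eta_a\}_{a\in\Gamma}$ be any edge weight function satisfying Condition (i)---equivalently Condition (iii), by Proposition~\ref{458}(b)---and fix a color $b\in\Gamma$. By hypothesis $\mc{C}$ contains an edge of color $b$; such an edge is a covering $(F,I)\to(F-x,I+x)$ in which the transferred element $x$ has color $b$, so $x$ is a minimal element of $F$ of color $b$. Condition (i) then forces $\eta_b(F,I)=-1$, while Proposition~\ref{458}(c) gives $\mu_b(F,I)=-1$ as well, so $\eta_b$ and $\mu_b$ agree at the split $(F,I)\in\mc{C}$. Since edge weight functions are component weight functions by Proposition~\ref{377}, Lemma~\ref{382}(b) says each of $\eta_b$ and $\mu_b$ is determined on $\mc{C}$ by its value at this single split; hence $\eta_b=\mu_b$ throughout $\mc{C}$. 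As $b\in\Gamma$ was arbitrary, $\{\eta_a\}_{a\in\Gamma}=\{\mu_a\}_{a\in\Gamma}$ on $\mc{C}$.

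For the finite case, Lemma~\ref{366} gives a single component $\mc{C}=\mc{FI}(P)$, and I would check that it contains an edge of every color: given $b\in\Gamma$, surjectivity of $\kappa$ yields $x\in P$ with $\kappa(x)=b$, and the principal filter $F:=\{z\in P:z\ge x\}$ has $x$ as its unique minimal element, so $(F,P-F)\to(F-x,(P-F)+x)$ is an edge of color $b$, necessarily in $\mc{C}$. The general statement then applies. The only delicate point is the bookkeeping in the uniqueness step---making sure an edge of color $b$ really produces a split at which Condition (i) (or dually Condition (iii) at the upper endpoint) pins down the value of $\eta_b$---but this is immediate from the definition of the edge coloring of $\mc{FI}(P)$, so I do not anticipate a genuine obstacle.
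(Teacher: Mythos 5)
Your proof is correct and follows essentially the same route as the paper: pin down $\eta_b$ and $\mu_b$ at one split of $\mc{C}$ lying below an edge of color $b$ using Condition (i) and Proposition \ref{458}(c), then invoke Proposition \ref{377} and Lemma \ref{382}(b) to propagate agreement over the whole component. Your explicit check that the single component of $\mc{FI}(P)$ contains an edge of every color when $P$ is finite (via surjectivity of $\kappa$ and principal filters) is a detail the paper leaves implicit, and is a welcome addition.
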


\bp
Let $\{\eta_a\}_{a \in \Gamma}$ be an edge weight function that satisfies Condition (i) of Proposition \ref{458}(a), and let $b \in \Gamma$.  Since there is an edge in $\mc{C}$ of color $b$, let $(F_0,I_0) \in \mc{C}$ be such that $F_0$ has a minimal element $y$ of color $b$.  We know $\eta_b(F_0,I_0) = -1$.  Then Proposition \ref{458}(c) says $\mu_b(F_0,I_0) = -1$ as well.  Now $\{\eta_a\}_{a \in \Gamma}$ and $\{\mu_a\}_{a \in \Gamma}$ are component weight functions by Proposition \ref{377}, so Lemma \ref{382}(b) says that $\eta_b$ and $\mu_b$ agree on all of $\mc{C}$.  A similar argument holds when $\{\eta_a\}_{a \in \Gamma}$ satisfies Condition (iii) of Proposition \ref{458}(a).  When $P$ is finite, the lattice $\mc{FI}(P)$ has one component by Lemma \ref{366}.
\ep 

\section{Frontier census properties}\label{522}

Here we introduce our last coloring properties.  For each $k \ge 1$ we define two \emph{frontier census properties}:
\bi[nosep]
\i (Mx$k$GA): For every color $a \in \Gamma$: If $x$ is maximal in $P_a$, then there are at most $k$ elements greater than $x$ that have their colors adjacent to $a$,
\i (Mn$k$LA): For every color $a \in \Gamma$: If $x$ is minimal in $P_a$, then there are at most $k$ elements less than $x$ that have their colors adjacent to $a$.
\ei
\ni The properties Mx1GA and Mn1LA are the most important of these properties; in \cite[Ch. 8]{Str} the property Mn2LA was also used.  In \cite[\S 7.4]{Str} we indicated how Mx1GA and Mn1LA revamp, generalize, and unify axioms considered by Stembridge and Green.  The property Mx1GA was retrospectively found to be implicitly present in Proposition 2.5 of \cite{Ste}.  That early statement in \cite{Ste} was formulated in terms of decompositions of Weyl group elements $w$, before the heap finite colored posets were introduced.  

The frontier census properties limit the eigenvalues of the coroot actions; see \cite[\S 4.3]{Str} for interactions between these properties and the edge weight function $\{\mu_a\}_{a \in \Gamma}$ of Definition \ref{Mu}.

\section{Upper $\boldsymbol{P}$-minuscule representations of $\boldsymbol{\mf{b}'_+}$}\label{609} 

The first main result of this paper gives necessary and sufficient conditions on coloring properties for $P$ so that $\mc{FI}(P)$ carries an upper $P$-minuscule representation of $\mf{b}'_+$; see Definition \ref{MinReps}(a). 

\begin{Thm}\label{614}
Let $P$ be a poset whose elements are colored by the nodes of a finite simple graph $\Gamma$.  Let $\mc{FI}(P)$ be the lattice of filter-ideal splits of $P$.  
\begin{enumerate}[(a),nosep]
\item The following are equivalent:
\be[(i),nosep]
\item The lattice $\mc{FI}(P)$ carries an upper (respectively lower) $P$-minuscule representation of $\mf{b}'_+$ (respectively $\mf{b}'_-$).
\item The poset $P$ satisfies EC, NA, AC, I2A, and Mx1GA (respectively Mn1LA).
\ee
\item If Condition (ii) is satisfied, then the $\mu$-diagonal operators $\{M_a\}_{a \in \Gamma}$ of Definition \ref{Mu} can be used to give the actions of the $\{h_a\}_{a \in \Gamma}$ for the upper (respectively lower) $P$-minuscule representation of $\mathfrak{b}'_+$ (respectively $\mathfrak{b}'_-$) of (i).
\end{enumerate}
\end{Thm}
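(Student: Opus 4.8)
The plan is to prove the two implications of part (a) in turn and to read part (b) off the forward construction; the ``lower $P$-minuscule / $\mf{b}'_-$ / Mn1LA'' statement follows from the ``upper'' one by the dual argument, so I would only treat the ``upper'' case. For ``(ii) implies (i)'': since I2A implies ND and I3ND (Proposition \ref{423}(a)), the poset $P$ satisfies EC, NA, and I3ND, so Theorem \ref{351} produces an $X$-square nilpotent representation of $\mf{n}_+$ carried by $\mc{FI}(P)$, and Proposition \ref{423}(c) extends it to an $X$-square nilpotent representation of $\mf{b}'_+$ via the $\mu$-diagonal operators $\{M_a\}_{a \in \Gamma}$. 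It then remains to verify the two requirements of Definition \ref{MinReps}(a) with $\{H_a\}_{a \in \Gamma} = \{M_a\}_{a \in \Gamma}$; the ``if'' half of requirement (ii) is exactly Proposition \ref{458}(c).

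For requirement (i) and the ``only if'' half of requirement (ii), I would compute directly from Definitions \ref{Upsilon}--\ref{Mu}. When $P_a \cap I = \e$ we have $\mu_a(F,I) = -1 + \psi_a(F,I) \ge -1$, and equality forces $\psi_a(F,I) = 0$, hence $P_a \cap F$ has a minimal element $y$ with no element of $F$ below $y$ whose color is adjacent to $a$; NA then makes $y$ minimal in $F$. When $P_a \cap I \ne \e$, either it has no maximal element and $\upsilon_a(F,I) = 1$, or it has a maximal element $y_I$ (unique by EC); in the latter case I would bound the elements of $I$ lying above $y_I$ with colors adjacent to $a$ by $2$ when $y_I$ is not maximal in $P_a$ (its $P_a$-successor lies in $F$, so AC confines those elements to an open interval in which I2A allows at most two) and by $1$ when $y_I$ is maximal in $P_a$ (Mx1GA, which also kills the ``$+1$'' alternative of Definition \ref{Upsilon}). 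Either way $\upsilon_a(F,I) \le 2$, so $\mu_a(F,I) \ge -1$; and $\mu_a(F,I) = -1$ forces $\upsilon_a(F,I) = 2$, which by the same count places both adjacent-color elements of the interval below the $P_a$-successor $y_F$ of $y_I$ into $I$, so by I2A no element of $F$ strictly between $y_I$ and $y_F$ has a color adjacent to $a$, and NA then makes $y_F$ minimal in $F$.

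For ``(i) implies (ii)'': an $X$-square nilpotent representation of $\mf{b}'_+$ carried by $\mc{FI}(P)$ forces EC, NA, and I3ND by Theorem \ref{388}, and by Proposition \ref{380}(a) the operators $\{H_a\}_{a \in \Gamma}$ are edge diagonal operators, so their $\mf{h}'$-weight $\{\eta_a\}_{a \in \Gamma}$ is an edge weight function; the ``if'' half of requirement (ii) makes $\{\eta_a\}_{a \in \Gamma}$ satisfy Condition (i) of Proposition \ref{458}(a), so Proposition \ref{458}(a) yields AC and I2A. To obtain Mx1GA I would argue by contradiction: suppose $x$ is maximal in $P_a$ but there are distinct $p, q > x$ with colors adjacent to $a$; let $I$ be the ideal generated by $p$ and $q$ and put $F := P - I$. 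Local finiteness of $[x,p]$ and $[x,q]$ makes $\{z \in I : z > x\}$ finite, so $p, q \in \Upsilon_a(F,I)$ and hence $\mu_a(F,I) \le -1$; the same finiteness lets me transfer everything above $x$ out of $I$ along a finite path to reach a split whose ideal is the principal ideal of $x$, which is incident to an edge of color $a$. Thus the component of $(F,I)$ contains an $a$-edge, and since both $\{\eta_a\}_{a \in \Gamma}$ and $\{\mu_a\}_{a \in \Gamma}$ are edge weight functions equal to $-1$ at the base of that edge (the former by requirement (ii), the latter by Proposition \ref{458}(c)), Lemma \ref{382}(b) forces $\eta_a = \mu_a$ on this component, so $\eta_a(F,I) \le -1$. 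But requirement (i) gives $\eta_a(F,I) \ge -1$, so $\eta_a(F,I) = -1$, and then requirement (ii) forces $F$ to have a minimal element of color $a$ --- impossible, since $x$ maximal in $P_a$ gives $P_a \subseteq I$ and hence $P_a \cap F = \e$.

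Part (b) is then immediate, since the representation built in the forward direction uses exactly $\{M_a\}_{a \in \Gamma}$ for the actions of $\{h_a\}_{a \in \Gamma}$. I expect the main obstacle to be the two delicate combinatorial case analyses above: establishing the ``only if'' half of requirement (ii) in the forward direction (that $\mu_a(F,I) = -1$ genuinely detects a minimal element of $F$ of color $a$, which requires the combined force of EC, NA, AC, I2A, and Mx1GA in the case $P_a \cap I \ne \e$), and the bookkeeping in the Mx1GA argument confirming that the witnessing component carries an $a$-edge so that $\eta_a$ and $\mu_a$ coincide on it.
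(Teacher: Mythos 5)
Your proposal is correct and follows essentially the same route as the paper: the same appeals to Theorem \ref{388}, Proposition \ref{380}(a), Proposition \ref{423}, and Proposition \ref{458} for everything except Mx1GA, the same two-case analysis (according to whether $P_b\cap I$ is empty) showing $\mu_b(F,I)\ge -1$ with equality detecting a minimal element of $F$ of color $b$, and a contradiction from a color-maximal element with two larger adjacent-colored elements for the necessity of Mx1GA. The only local difference is in that last step: the paper evaluates $\eta_b$ on the offending split directly via the component weight identity (\ref{371}) together with Proposition \ref{458}(b), whereas you first exhibit an $a$-colored edge in the relevant component, invoke Lemma \ref{382}(b) and Proposition \ref{458}(c) to conclude $\eta_a=\mu_a$ there, and then compute $\mu_a$ from Definition \ref{Upsilon} --- both versions are valid and rest on the same machinery.
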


\begin{Rmk} 
It can be confirmed that the poset displayed in Figure \ref{289} satisfies the Mx1GA version of Condition (ii); the poset displayed in Figure \ref{134} satisfies both versions.  
If $P$ is connected and satisfies Condition (ii), then $\mathcal{FI}(P)$ has only one nontrivial component.  Then the $\mu$-diagonal operators are the unique operators satisfying Condition (i) on this component.
This is Corollary 8.3.9 of \cite{Str}, which followed from the classification of posets satisfying Condition (ii) presented there in Section 8.3.
\end{Rmk}

\bp[Proof of Theorem \ref{614}.]
To show (a), assume that (i) holds for the terms upper and $\mathfrak{b}'_+$ and Mx1GA.  Since this representation is $X$-square nilpotent, by Theorem \ref{388} we know $P$ satisfies EC and NA.  Let $\{H_a\}_{a \in \Gamma}$ be the diagonal operators for this representation with $\mf{h}'$-weight $\{\eta_a\}_{a \in \Gamma}$ and eigenvalue set $\mc{E}_{\eta} := \{\eta_a(F,I) \ | \ a \in \Gamma, (F,I) \in \mc{FI}(P)\}$.  By Proposition \ref{380}(a) we know that $\{\eta_a\}_{a \in \Gamma}$ is an edge weight function.  Since the representation is upper $P$-minuscule, for all $a \in \Gamma$ and $(F,I) \in \mc{FI}(P)$ we have
\begin{align}
\label{617} &\eta_a(F,I) \in \mc{E}_\eta \subseteq \{-1,0,1,2,\dots\}, \\
\label{618} &\eta_a(F,I) = -1 \ \t{if and only if $a$ is the color of a minimal element of $F$}.
\end{align}
\ni By (\ref{618}) and Proposition \ref{458}(a) we see $P$ satisfies AC and I2A.  For the sake of contradiction, suppose Mx1GA fails.  Then there is a color $b \in \Gamma$ and an element $y$ maximal in $P_b$ such that there are two or more elements greater than $y$ with colors adjacent to $b$.  Let $u$ and $v$ be two such elements.  Define $I'$ to be the ideal generated by $u$ and $v$.  Define $I := I' - ((y,u] \cup (y,v])$.  Note $I$ is an ideal of $P$ and that $y$ is maximal in $I$.  Define $F' := P - I'$ and $F := P - I$, and further note that $I' - I = (y,u] \cup (y,v]$ and $I - I' = \emptyset$.  By local finiteness, the splits $(F',I')$ and $(F,I)$ are in the same component of $\mc{FI}(P)$.  By Proposition \ref{377} we know that $\{\eta_a\}_{a \in \Gamma}$ is a component weight function, so using Equation (\ref{371}) we get
\begin{align*}
\eta_b(F',I') - \eta_b(F,I) &= 2\Delta_b[(F',I'),(F,I)] - \sum_{c \adj b} \Delta_c[(F',I'),(F,I)] \\
&= 2|P_b \cap ((y,u] \cup (y,v])| - \sum_{c \adj b} |P_c \cap ((y,u] \cup (y,v])|. 
\end{align*}
\ni Since $y$ is maximal in $P_b$, we have $|P_b \cap ((y,u] \cup (y,v])| = 0$.  Since $u$ and $v$ have colors adjacent to $b$, we have $\sum_{c \adj b} |P_c \cap ((y,u] \cup (y,v])| \ge 2$.  Thus we get the inequality $\eta_b(F',I') - \eta_b(F,I) \le -2$.  Since $y$ is maximal in $I$ and $\kappa(y) = b$, by Proposition \ref{458}(b) we get $\eta_b(F,I) = 1$.  Thus the inequality becomes $\eta_b(F',I') \le -1$.  By (\ref{617}) we have $\eta_b(F',I') \ge -1$, and so $\eta_b(F',I') = -1$.  Then (\ref{618}) shows $F'$ has a minimal element $z$ of color $b$.  Since $y \in I'$, by EC we have $y < z$.  But $y$ is maximal in $P_b$.  This contradiction shows Mx1GA holds, and so (ii) holds.

Now assume that (ii) holds.  Since $P$ satisfies EC, AC, and I2A, we know by Proposition \ref{423}(b) that $\{\mu_a\}_{a \in \Gamma}$ is an edge weight function.  We use the operators $\{M_a\}_{a \in \Gamma}$ specified by $\{\mu_a\}_{a \in \Gamma}$ to get the desired actions of the $\{h_a\}_{a \in \Gamma}$.  Since $P$ also satisfies NA, Proposition \ref{423}(c) says $\mc{FI}(P)$ carries an $X$-square nilpotent representation of $\mf{b}'_+$ when using the operators $\{M_a\}_{a \in \Gamma}$.  The eigenvalue set $\mc{E}_\mu$ of these operators is contained in $\mb{Z}$ by construction.  Let $b \in \Gamma$ and $(F,I) \in \mc{FI}(P)$, and for the sake of contradiction, suppose $\mu_b(F,I) < -1$.  Then necessarily we have $P_b \cap I \ne \emptyset$, and so $\mu_b(F,I) = 1 - \upsilon_b(F,I)$ and $\upsilon_b(F,I) \ge 3$.  Thus $P_b \cap I$ must have a maximal element $y$ of color $b$, and there must be three or more elements greater than $y$ in $I$ with colors adjacent to $b$.  Then by AC and I2A we see $P_b \cap F = \emptyset$, so $y$ is maximal in $P_b$.  But this would violate Mx1GA.  Thus we see that $\mu_b(F,I) \ge -1$, and so $\mc{E}_\mu \subseteq \{-1,0,1,2,\dots\}$.  Then Proposition \ref{458}(c) says $\mu_b(F,I) = -1$ if $b$ is the color of a minimal element of $F$.  

We now need to show that if $\mu_b(F,I) = -1$, then $b$ is the color of a minimal element of $F$.  Hence suppose that $\mu_b(F,I) = -1$.  We start with the case $P_b \cap I \ne \emptyset$.  Then $-1 = \mu_b(F,I) = 1 - \upsilon_b(F,I)$, so $\upsilon_b(F,I) = 2$.  This shows $P_b \cap I$ has a maximal element $y$ since $\upsilon_b(F,I) = 1$ otherwise.  Let $c \in \Gamma$ be such that $c \adj b$.  If there are infinitely many elements greater than $y$ in $P_c \cap I$, then $P_b \cap F$ is empty by AC and local finiteness.  But then $y$ would be maximal in $P_b$, which would violate Mx1GA.  Thus no such $c$ exists, and so $|\Upsilon_b(F,I)| = \upsilon_b(F,I) = 2$.  So there are two distinct elements $u$ and $v$ in $I$ greater than $y$ with colors adjacent to $b$.  By Mx1GA we know that $y$ cannot be maximal in $P_b$.  Let $z \in P_b$ be such that $y < z$ are consecutive occurrences of the color $b$.  Since $y$ is maximal in $P_b \cap I$, we have $z \in F$.  By AC we know $u,v \in (y,z)$.  Suppose $w \in P$ satisfies $w \to z$.  Then NA implies that $\kappa(w) \adj b$.  Thus by AC we have $w \in (y,z)$.  By I2A we know that $w = u$ or $w = v$, so $w \in I$.  Thus $z$ is minimal in $F$ and has color $b$.  

Otherwise we have the case $P_b \cap I = \emptyset$.  Then $-1 = \mu_b(F,I) = -1 + \psi_b(F,I)$, so $\psi_b(F,I) = 0$.  This shows $P_b \cap F$ has a minimal element $y$ since $\psi_b(F,I) = 1$ otherwise.  
Suppose $w \in P$ satisfies $w \to y$.  By NA we know that $\kappa(w) \sim b$.  We also know there are finitely many elements less than $y$ in $P_{\kappa(w)} \cap F$ since otherwise $\psi_b(F,I) \ge 1$.  If $w \in F$, this shows that $w \in \Psi_b(F,I)$.  But since $0 = \psi_b(F,I) \ge |\Psi_b(F,I)|$, we have $w \notin F$.
Hence $y$ is minimal in $F$ and has color $b$.  Thus $\mu_b(F,I) = -1$ if and only if $b$ is the color of a minimal element of $F$.  Hence the representation of $\mf{b}'_+$ carried by $\mc{FI}(P)$ with the actions of $\{h_a\}_{a \in \Gamma}$ given by $\{M_a\}_{a \in \Gamma}$ is upper $P$-minuscule, so (i) holds.

The equivalence of (i) and (ii) for lower and $\mathfrak{b}'_-$ and Mn1LA follows from a dualized argument, again using the $\mu$-diagonal operators $\{M_a\}_{a \in \Gamma}$ for the proof that (ii) implies (i).  Thus (a) holds.  
In both cases of (ii) implies (i), the $\mu$-diagonal operators were used to build the required representation.  Thus (b) holds.
\ep

The following lemma is used for Corollary \ref{627} and Theorem \ref{653}.

\begin{Lem}\label{619}
Let $\{H_a\}_{a \in \Gamma}$ be diagonal operators on $\langle \mathcal{FI}(P) \rangle$.  Suppose the operators $\{X_a,H_a\}_{a \in \Gamma}$ (respectively $\{Y_a,H_a\}_{a \in \Gamma}$) satisfy HX (HY).  If the eigenvalue set of $\{H_a\}_{a \in \Gamma}$ is contained in $\{-1,0,1\}$, then the actions of the color raising (lowering) operators $\{X_a\}_{a \in \Gamma}$ (respectively $\{Y_a\}_{a \in \Gamma}$) are $X$-square ($Y$-square) nilpotent. 
\end{Lem}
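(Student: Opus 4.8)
The plan is to argue one color at a time and one basis vector at a time, using only the relation $[H_a,X_a]=2X_a$ from HX (and, for the dual statement, $[H_a,Y_a]=-2Y_a$ from HY). The structural fact to invoke is that the $\{H_a\}_{a\in\Gamma}$ being diagonal operators on $\langle\mc{FI}(P)\rangle$ means each standard basis vector $\la F,I\ra$ is simultaneously an eigenvector of every $H_a$; write $\xi_a(F,I)$ for its $H_a$-eigenvalue, so by hypothesis $\xi_a(F,I)\in\{-1,0,1\}$ for all $a$ and all $(F,I)$.

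Fix $a\in\Gamma$ and a split $(F,I)$. Rewrite HX as $H_aX_a=X_aH_a+2X_a$ and apply both sides to $\la F,I\ra$: this yields $H_a\bigl(X_a.\la F,I\ra\bigr)=(\xi_a(F,I)+2)\bigl(X_a.\la F,I\ra\bigr)$, so $X_a.\la F,I\ra$ lies in the $(\xi_a(F,I)+2)$-eigenspace of $H_a$. Applying $H_aX_a=X_aH_a+2X_a$ once more shows $X_a^2.\la F,I\ra$ lies in the $(\xi_a(F,I)+4)$-eigenspace of $H_a$. Since $\xi_a(F,I)\ge -1$, we have $\xi_a(F,I)+4\ge 3$, which exceeds every eigenvalue of $H_a$; hence that eigenspace is $\{0\}$ and $X_a^2.\la F,I\ra=0$. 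As $(F,I)$ ranges over $\mc{FI}(P)$ this forces $X_a^2=0$, and as $a$ was arbitrary the $\{X_a\}_{a\in\Gamma}$ are $X$-square nilpotent. The $Y$-statement follows by the symmetric computation with $H_aY_a=Y_aH_a-2Y_a$: then $Y_a^2.\la F,I\ra$ lies in the $(\xi_a(F,I)-4)$-eigenspace, and $\xi_a(F,I)-4\le -3$ lies strictly below every eigenvalue of $H_a$, so $Y_a^2.\la F,I\ra=0$.

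There is no real obstacle here; the only point worth flagging is the bookkeeping observation that the argument uses neither the off-diagonal parts HX(ii),(iii) nor the property EC — just the single relation $[H_a,X_a]=2X_a$ together with the fact that the standard basis diagonalizes each $H_a$. I would also note in passing that the proof shows slightly more: $X_a$ annihilates every $\la F,I\ra$ with $\xi_a(F,I)\in\{0,1\}$ (since then $\xi_a(F,I)+2\in\{2,3\}$ is already out of range), so the only basis vectors $X_a$ can move are those of $H_a$-eigenvalue $-1$, which it sends into the $+1$-eigenspace, after which a second application of $X_a$ must vanish.
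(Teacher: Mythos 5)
Your proof is correct and is essentially the paper's own argument: both use the relation $H_aX_a = X_aH_a + 2X_a$ twice to place $X_a^2.\la F,I\ra$ in an $H_a$-eigenspace for an eigenvalue in $\{3,4,5\}$, which must vanish since the standard basis diagonalizes $H_a$ with eigenvalues in $\{-1,0,1\}$. The closing observations (only HX(i) is used, and $X_a$ can only move $(-1)$-eigenvectors to $(+1)$-eigenvectors) are accurate but not needed.
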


\begin{proof}
Suppose for a contradiction that there is some $a \in \Gamma$ and $(F,I) \in \mathcal{FI}(P)$ such that $X_a^2.\langle F,I \rangle \ne 0$.  Using the relation $H_aX_a = X_aH_a + [H_a,X_a] = X_aH_a + 2X_a$ obtained from HX twice, we get
\begin{align*}  
H_a X_a^2.\langle F,I \rangle = (X_aH_aX_a + 2X_a^2).\langle F,I \rangle = (X_a^2H_a + 2X_a^2 + 2X_a^2).\langle F,I \rangle. 
\end{align*}
\noindent Since the eigenvalue of $H_a.\langle F,I \rangle$ is either $-1$, $0$, or $1$, the right hand side is $\xi X_a^2. \langle F,I \rangle$ for some $\xi \in \{3,4,5\}$.  But since all basis vectors in the expansion of $X_a^2.\langle F,I \rangle$ are $\mathfrak{h}'$-weight vectors with eigenvalues in $\{-1,0,1\}$, this is impossible.  Thus this representation is $X$-square nilpotent.  The proof for $\{Y_a,H_a\}_{a \in \Gamma}$ follows from a dualized argument.
\end{proof}

\ni The following corollary will be expanded in Theorem \ref{653}, which is our foremost main result.

\begin{Cor}\label{627}
The following are equivalent:
\begin{enumerate}[(i),nosep]
\item \label{(ii)} The lattice $\mathcal{FI}(P)$ carries upper and lower $P$-minuscule representations of $\mathfrak{b}'_+$ and $\mathfrak{b}'_-$, respectively, using the same diagonal operators $\{H_a\}_{a \in \Gamma}$.
\item \label{(iv)} The lattice $\mc{FI}(P)$ carries representations of $\mf{b}'_+$ and $\mf{b}'_-$ with diagonal operators $\{H_a\}_{a \in \Gamma}$ that satisfy for every $b \in \Gamma$ and every split $(F,I)$:
\bi[nosep]
\i $H_b.\la F,I \ra = -\la F,I \ra$ if $b$ is the color of a minimal element of $F$,
\i $H_b.\la F,I \ra = +\la F,I \ra$ if $b$ is the color of a maximal element of $I$, and
\i $H_b.\la F,I \ra = 0$ otherwise.
\ei
\item \label{(v)} The poset $P$ satisfies the properties EC, NA, AC, I2A, Mx1GA, and Mn1LA.
\end{enumerate}
When any of these conditions are satisfied, the $\mu$-diagonal operators $\{M_a\}_{a \in \Gamma}$ are the unique operators satisfying \ref*{(ii)} or \ref*{(iv)}.
\end{Cor}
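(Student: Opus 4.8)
The plan is to establish \ref*{(v)} $\Rightarrow$ \ref*{(ii)} $\Rightarrow$ \ref*{(v)} together with \ref*{(ii)} $\Leftrightarrow$ \ref*{(iv)}, and then the concluding uniqueness assertion; essentially all of the substance is imported from Theorem \ref{614}, invoked in both its Mx1GA and Mn1LA forms, so the only real work beyond that theorem is unfolding Definition \ref{MinReps}(a) and comparing two weight functions component by component. I would dispatch \ref*{(ii)} $\Leftrightarrow$ \ref*{(iv)} first, as pure reformulation. Given \ref*{(ii)}: Definition \ref{MinReps}(a)(i) applied to the upper side gives $\mc{E}_{\mf{h}'} \subseteq \{-1,0,1,2,\dots\}$ and to the lower side gives $\mc{E}_{\mf{h}'} \subseteq \{\dots,-1,0,1\}$, whence $\mc{E}_{\mf{h}'} \subseteq \{-1,0,1\}$; Definition \ref{MinReps}(a)(ii) for the two sides yields the first two bullets of \ref*{(iv)}, and the third bullet follows because at a split where $b$ is neither the color of a minimal element of $F$ nor of a maximal element of $I$, the eigenvalue of $H_b$ cannot be $-1$ or $+1$ (the ``iff'' statements exclude those values) and so is $0$. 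Conversely, given \ref*{(iv)} the three bullets are mutually exclusive and assign the three distinct eigenvalues $-1, +1, 0$, so the ``iff'' forms of Definition \ref{MinReps}(a)(ii) hold for both $\mf{b}'_+$ and $\mf{b}'_-$, and $\mc{E}_{\mf{h}'} \subseteq \{-1,0,1\}$ lies in both required ranges; hence the common operators $\{H_a\}_{a \in \Gamma}$ make the $\mf{b}'_+$-representation upper $P$-minuscule and the $\mf{b}'_-$-representation lower $P$-minuscule, which is \ref*{(ii)}.

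For \ref*{(v)} $\Rightarrow$ \ref*{(ii)} I would apply Theorem \ref{614}(a) in its Mx1GA form to produce an upper $P$-minuscule representation of $\mf{b}'_+$, with the $\mu$-diagonal operators $\{M_a\}_{a \in \Gamma}$ permissible by Theorem \ref{614}(b); dually Theorem \ref{614}(a),(b) in the Mn1LA form give a lower $P$-minuscule representation of $\mf{b}'_-$ also realized by $\{M_a\}_{a \in \Gamma}$. The same operators serve both, so \ref*{(ii)} holds with $\{H_a\}_{a \in \Gamma} = \{M_a\}_{a \in \Gamma}$; this in particular shows $\{M_a\}_{a \in \Gamma}$ satisfies \ref*{(ii)}, hence \ref*{(iv)}, which is the existence half of the uniqueness claim. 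For \ref*{(ii)} $\Rightarrow$ \ref*{(v)}, the $\mf{b}'_+$-representation in \ref*{(ii)} is upper $P$-minuscule, so the forward implication of Theorem \ref{614}(a) (Mx1GA form) forces EC, NA, AC, I2A, and Mx1GA on $P$, and dually the lower $P$-minuscule $\mf{b}'_-$-representation forces EC, NA, AC, I2A, and Mn1LA; their union is \ref*{(v)}. This closes the cycle, so \ref*{(ii)}, \ref*{(iv)}, and \ref*{(v)} are equivalent.

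For uniqueness, assume the equivalent conditions hold (so $P$ satisfies \ref*{(v)}), and let $\{H_a\}_{a \in \Gamma}$, with $\mf{h}'$-weight $\{\eta_a\}_{a \in \Gamma}$, be any operators satisfying \ref*{(iv)} (equivalently \ref*{(ii)}). Because $P$ satisfies EC and $\{H_a\}_{a \in \Gamma}$ gives the actions of $\{h_a\}_{a \in \Gamma}$ in a representation of $\mf{b}'_+$, Proposition \ref{380}(a) makes $\{\eta_a\}_{a \in \Gamma}$ an edge weight function, hence a component weight function by Proposition \ref{377}; likewise $\{\mu_a\}_{a \in \Gamma}$ is a component weight function by Proposition \ref{423}(b) and Proposition \ref{377}. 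By Lemma \ref{382}(b) it then suffices to check $\eta_a = \mu_a$ at a single split of each component $\mc{C}$ of $\mc{FI}(P)$, for every $a \in \Gamma$. If $\mc{C}$ contains an edge of color $a$, say $(F,I) \to (F-x,I+x)$ with $\kappa(x) = a$, then the first bullet of \ref*{(iv)} gives $\eta_a(F,I) = -1$, and Proposition \ref{458}(c) (applicable since $P$ satisfies EC, AC, I2A) gives $\mu_a(F,I) = -1$, so they agree there. If $\mc{C}$ contains no edge of color $a$, then no split $(F,I)$ in $\mc{C}$ can have $a$ as the color of a minimal element of $F$ or of a maximal element of $I$, since such an element would generate a color-$a$ edge within $\mc{C}$; so the third bullet of \ref*{(iv)} forces $\eta_a \equiv 0$ on $\mc{C}$, and the same bullet applied to $\{M_a\}_{a \in \Gamma}$ forces $\mu_a \equiv 0$ on $\mc{C}$. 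Either way $\eta_a$ and $\mu_a$ agree on $\mc{C}$, so $\{H_a\}_{a \in \Gamma} = \{M_a\}_{a \in \Gamma}$. When $\mc{C}$ happens to carry an edge of every color one may instead quote Corollary \ref{463}; in particular, for finite $P$ the whole argument reduces to that corollary via Lemma \ref{366}.

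The step I expect to be the main obstacle is this last one, namely handling components of $\mc{FI}(P)$ that do not contain an edge of every color: there Corollary \ref{463} is not directly available, and the anchor split needed for Lemma \ref{382}(b) must instead be supplied by the ``$H_b.\la F,I \ra = 0$ otherwise'' clause of \ref*{(iv)}, whose use across the whole component rests on the elementary observation that a minimal element of $F$ (or a maximal element of $I$) of color $a$ always produces an edge of color $a$ incident to that split. Everything else is a routine unfolding of Definition \ref{MinReps}(a) together with repeated appeals to Theorem \ref{614} and Proposition \ref{458}.
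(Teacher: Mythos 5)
Your overall architecture matches the paper's: the equivalence of conditions \ref*{(ii)} and \ref*{(v)} is delegated to Theorem \ref{614} in both its Mx1GA and Mn1LA forms, and condition \ref*{(iv)} is treated as a reformulation of \ref*{(ii)}. There is, however, one genuine gap. In your direction \ref*{(iv)} $\Rightarrow$ \ref*{(ii)} you verify only the eigenvalue clauses of Definition \ref{MinReps}(a), but that definition also requires the representation of $\mf{b}'_+$ (respectively $\mf{b}'_-$) to be $X$-square (respectively $Y$-square) nilpotent, and condition \ref*{(iv)} does not assert this; nothing in your argument establishes $X_a^2 = 0$. The paper closes this step with Lemma \ref{619}: since \ref*{(iv)} forces the eigenvalue set into $\{-1,0,1\}$ and the relations HX hold for any representation of $\mf{b}'_+$, the operators $\{X_a\}_{a \in \Gamma}$ are automatically square nilpotent (and dually for the $\{Y_a\}_{a \in \Gamma}$). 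You need to insert that citation, or reprove its content, before concluding that the representations are upper and lower $P$-minuscule.

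Separately, your uniqueness argument is correct but far heavier than necessary. The three bullets of \ref*{(iv)} are exhaustive over all splits and all colors (the third is an ``otherwise''), so they completely specify $H_b.\la F,I \ra$ for every $b$ and every $(F,I)$; uniqueness is immediate, and the $\mu$-diagonal operators are then the unique such operators because they satisfy \ref*{(ii)} by Theorem \ref{614}(b) and hence \ref*{(iv)}. This is the paper's one-line argument. Your detour through edge and component weight functions, Lemma \ref{382}(b), Proposition \ref{458}(c), and the case analysis on components lacking an edge of some color does reach the same conclusion --- and your observation that a minimal element of $F$ of color $a$ always produces a color-$a$ edge at that split is sound --- but all of that machinery is doing work that the ``otherwise'' clause already does for free.
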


\bp 
Suppose \ref*{(iv)} holds with diagonal operators $\{H_a\}_{a \in \Gamma}$.  The eigenvalue set of each representation is contained in $\{-1,0,1\}$.  By Lemma \ref{619} these representations of $\mf{b}'_+$ and $\mf{b}'_-$ are respectively $X$- and $Y$-square nilpotent.  The eigenvalue condition in \ref*{(iv)} shows for all $b \in \Gamma$ and $(F,I) \in \mc{FI}(P)$ that $H_b.\la F,I \ra = -\la F,I \ra$ (respectively $H_b.\la F,I \ra = +\la F,I \ra$) if and only if $b$ is the color of a minimal (maximal) element of $F$ (respectively $I$).  Thus the representation of $\mf{b}'_+$ (respectively $\mf{b}'_-$) is upper (lower) $P$-minuscule, so \ref*{(ii)} holds. 

Now suppose \ref*{(ii)} holds with diagonal operators $\{H_a\}_{a \in \Gamma}$.  Theorem \ref{614}(a) shows \ref*{(v)} holds.  Since the representations of $\mf{b}'_+$ and $\mf{b}'_-$ are respectively upper and lower $P$-minuscule, the eigenvalue set of $\{H_a\}_{a \in \Gamma}$ is contained in $\{-1,0,1,2,\dots\} \cap \{\dots,-2,-1,0,1\} = \{-1,0,1\}$.  Also, for $b \in \Gamma$ and $(F,I) \in \mc{FI}(P)$ we have $H_b.\la F,I \ra = -\la F,I \ra$ (respectively $H_b.\la F,I \ra = +\la F,I \ra$) if and only if $F$ (respectively $I$) has a minimal (maximal) element of color $b$.  Hence $H_b.\la F,I \ra = 0$ otherwise, so \ref*{(iv)} holds. 

Now suppose \ref*{(v)} holds.  Then Theorem \ref{614}(b) shows \ref*{(ii)} holds using the $\mu$-diagonal operators $\{M_a\}_{a \in \Gamma}$. 

The conditions in \ref*{(iv)} completely specify the operators, so they are unique.  The $\mu$-diagonal operators satisfy \ref*{(ii)} when these conditions hold, and this proof showed that any diagonal operators satisfying \ref*{(ii)} also satisfy \ref*{(iv)}.
\ep 

\section{$\boldsymbol{P}$-minuscule representations of $\boldsymbol{\mf{g}'}$}\label{631}

We obtain our foremost main result, Theorem \ref{653}.  It provides necessary as well as sufficient conditions on coloring properties for $P$ needed for $\mc{FI}(P)$ to carry a $P$-minuscule representation of $\mf{g}'$.  Both its statement and its proof simultaneously handle finite and infinite posets.

\begin{Lem}\label{645}
Suppose $P$ satisfies EC.  The relation $[X_b,Y_a] = 0$ holds when $a,b \in \Gamma$ are distinct. 
\end{Lem}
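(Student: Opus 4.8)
The plan is to establish the operator identity on each basis vector and then extend linearly: it suffices to show $X_b Y_a . \la F,I \ra = Y_a X_b . \la F,I \ra$ for every split $(F,I)$. Since we are assuming EC, each of $X_b$ and $Y_a$ sends a basis vector to a single basis vector or to $0$ (Remark~\ref{ECcomment}), so on each side we are comparing a single term. I would actually prove the sharper claim: \emph{if} $X_b Y_a . \la F,I \ra \ne 0$, then $Y_a X_b . \la F,I \ra$ equals it. The reverse implication follows by the dual argument (interchanging filters with ideals, minima with maxima, and the $X$'s with the $Y$'s), and together these two implications leave only the case in which both sides equal $0$; so the claim suffices. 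Note that EC is the only coloring property used.

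To prove the claim, suppose $X_b Y_a . \la F,I \ra \ne 0$. Then $Y_a . \la F,I \ra = \la F+y, I-y \ra$ for the unique (by EC) element $y$ of color $a$ maximal in the ideal $I$, and then $X_b . \la F+y,I-y \ra = \la F+y-x, I-y+x \ra$ for the unique element $x$ of color $b$ minimal in the filter $F+y$. The heart of the argument is to show that $x$ and $y$ are \emph{incomparable}: since $a \ne b$ we have $x \ne y$, so $x \in F$; since $F \subseteq F+y$ and $x$ is minimal in $F+y$, the element $x$ is already minimal in $F$, and in particular $y \not< x$ (as $y \in F+y$); and $x \le y$ is impossible because $x$ lies in the filter $F$ whereas $y$ lies in the complementary ideal. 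With incomparability in hand the rest is bookkeeping: $x$ is minimal of color $b$ in $F$, so $X_b . \la F,I \ra = \la F-x, I+x \ra$; and $y$ remains maximal of color $a$ in the ideal $I+x$ (no element of $I$ exceeds $y$, and $x$ is incomparable to $y$), so $Y_a X_b . \la F,I \ra = \la F-x+y, I+x-y \ra = \la F+y-x, I-y+x \ra = X_b Y_a . \la F,I \ra$.

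The step I expect to be the crux is precisely this incomparability of the moved elements $x$ and $y$, which is where $a \ne b$ and EC are essential. Conceptually, the two orders of operation could only fail to commute if $x$ (color $b$, extreme in $F$) and $y$ (color $a$, extreme in $I$) were comparable, which by extremality would force $x$ to \emph{cover} $y$; in that configuration, moving $y$ into the filter first places it directly below $x$, so that $P_b \cap (F+y)$ — a chain by EC with least element $x$ — has no minimal element left in $F+y$, annihilating $X_b$, and dually moving $x$ into the ideal first annihilates $Y_a$, so both orders give $0$ and still agree. The argument above shows this degenerate configuration simply cannot arise when $X_b Y_a . \la F,I \ra \ne 0$, so no separate treatment of comparable $x,y$ is needed.
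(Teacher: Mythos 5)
Your proposal is correct and follows essentially the same route as the paper's proof: assume $X_bY_a.\la F,I \ra \ne 0$, show the two transferred elements $x$ and $y$ are incomparable (the paper deduces this from both being minimal in $F+y$, while you also invoke the filter/ideal split for one direction — a trivial variation), conclude that the two operator orders produce the same split, and dispose of the remaining case by duality. No gaps.
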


\begin{proof}
Fix distinct $a,b \in \Gamma$ and let $(F,I) \in \mathcal{FI}(P)$.  Suppose that $X_bY_a. \langle F,I \rangle \ne 0$.  Then with EC in mind, there exist unique elements $x,y \in P$ such that $\kappa(y) = a$ and $\kappa(x) = b$ satisfying $X_bY_a.\langle F,I \rangle = \langle (F+y) - x, (I - y) + x\rangle$.  Note that both $x$ and $y$ are minimal in the filter $F+y$.  Since $x \ne y$, this shows that $x$ and $y$ are incomparable.  Since $x$ is minimal in $F + y$, it is also minimal in $F$.  Since $x$ and $y$ are incomparable and $y$ is maximal in $I$, it is also maximal in $I + x$.  Hence $Y_aX_b.\langle F,I \rangle = \langle (F-x)+y,(I+x)-y \rangle = X_bY_a.\langle F,I \rangle$.   Hence $[X_b,Y_a].\la F,I \ra = 0$.  The case where $Y_aX_b.\la F,I \ra \ne 0$ follows from a dualized argument.
\end{proof}


Here we characterize the $P$-minuscule representations of $\mf{g}'$ in several ways; see Section \ref{251} for the definitions of $X$- and $Y$-square nilpotent actions and Definition \ref{MinReps}(b) for the definition of $P$-minuscule representations of $\mf{g}'$.  

\begin{Thm}\label{653}
Let $P$ be a poset whose elements are colored by the nodes of a finite simple graph $\Gamma$.  Let $\mc{FI}(P)$ be the lattice of filter-ideal splits of $P$.  Then (i) and (ii) from Corollary \ref{627} are each equivalent to:
\be[(i),nosep]
\setcounter{enumi}{2}
\item The poset $P$ satisfies the properties EC, NA, AC, I2A, Mx1GA, and Mn1LA.
\item \label{(iii)} The lattice $\mc{FI}(P)$ carries $X$- and $Y$-square nilpotent representations of $\mf{b}'_+$ and $\mf{b}'_-$ respectively, using diagonal operators $\{H_a\}_{a \in \Gamma}$ that satisfy $[X_a,Y_a] = H_a$ for every color $a \in \Gamma$.
\item \label{(i)} The lattice $\mc{FI}(P)$ carries a $P$-minuscule representation of $\mf{g}'$.
\ee
\ni When any of these conditions are satisfied, the $\mu$-diagonal operators $\{M_a\}_{a \in \Gamma}$ of Definition \ref{Mu} are the unique diagonal operators satisfying 
\ref*{(ii)}, \ref*{(iv)}, \ref*{(iii)}, or \ref*{(i)}. 
\end{Thm}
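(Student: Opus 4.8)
The plan is to build on Corollary~\ref{627}, which already establishes that its conditions (i), (ii) and the coloring-property list (item (iii) of the present theorem) are mutually equivalent and that the $\mu$-diagonal operators $\{M_a\}_{a\in\Gamma}$ are the unique operators witnessing (i) or (ii). Thus it suffices to splice the two genuinely new conditions --- the $\mathfrak{b}'_\pm$ condition with $[X_a,Y_a]=H_a$ (item \ref*{(iii)}) and the existence of a $P$-minuscule representation of $\mathfrak{g}'$ (item \ref*{(i)}) --- into this cycle. Concretely I would prove the chain of implications (coloring properties) $\Rightarrow$ \ref*{(iii)} $\Rightarrow$ \ref*{(i)} $\Rightarrow$ (condition (i) of Corollary~\ref{627}), which together with Corollary~\ref{627} closes the loop.

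The first step is a small computation that I would record once and reuse throughout. Whenever $P$ satisfies EC and ND, Definition~\ref{RaisingLoweringOperators} gives, for each split $(F,I)$ and each $a\in\Gamma$, that $X_aY_a.\la F,I\ra=\la F,I\ra$ precisely when $I$ has a (necessarily unique) maximal element of color $a$ and is $0$ otherwise, and dually $Y_aX_a.\la F,I\ra=\la F,I\ra$ precisely when $F$ has a minimal element of color $a$. Moreover, ND forbids $F$ from having a minimal element of color $a$ at the same time that $I$ has a maximal element of color $a$: by EC those two elements would be comparable, the one in $F$ lying above the one in $I$, and no element can lie strictly between them, so they would be neighbors of equal color. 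Hence $[X_a,Y_a].\la F,I\ra$ equals $+\la F,I\ra$, $-\la F,I\ra$, or $0$ according to these three mutually exclusive cases, and under EC, AC, I2A this is exactly $\mu_a(F,I).\la F,I\ra=M_a.\la F,I\ra$ by Proposition~\ref{458}(c) and the explicit description of $\{M_a\}_{a\in\Gamma}$ furnished by Corollary~\ref{627}.

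Now the implications. For (coloring properties) $\Rightarrow$ \ref*{(iii)}: Theorem~\ref{614}(b) (equivalently Corollary~\ref{627}) produces the upper and lower $P$-minuscule representations of $\mathfrak{b}'_+$ and $\mathfrak{b}'_-$ using $\{M_a\}_{a\in\Gamma}$, and these are $X$- and $Y$-square nilpotent by Theorem~\ref{388}; the preliminary computation then gives $[X_a,Y_a]=M_a$, so \ref*{(iii)} holds with $H_a=M_a$. For \ref*{(iii)} $\Rightarrow$ \ref*{(i)}: from the $X$-square nilpotent representation of $\mathfrak{b}'_+$, Theorem~\ref{388} supplies EC and NA (hence ND), so Lemma~\ref{645} gives $[X_b,Y_a]=0$ for $a\ne b$; combined with the relations XX, HH, HX already encoded in the $\mathfrak{b}'_+$-representation, YY, HH, HY encoded in the $\mathfrak{b}'_-$-representation, and the hypothesis $[X_a,Y_a]=H_a$, the full defining relation set for $\mathfrak{g}'$ holds, so $\mc{FI}(P)$ carries a representation of $\mathfrak{g}'$; the preliminary computation shows $H_a.\la F,I\ra=[X_a,Y_a].\la F,I\ra$ has eigenvalue in $\{-1,0,1\}$, so the representation is $P$-minuscule. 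For \ref*{(i)} $\Rightarrow$ (condition (i) of Corollary~\ref{627}): restrict the $P$-minuscule $\mathfrak{g}'$-representation to $\mathfrak{b}'_+$ and to $\mathfrak{b}'_-$ using the same $\{H_a\}_{a\in\Gamma}$; Lemma~\ref{619} makes these $X$- and $Y$-square nilpotent; since $[X_a,Y_a]=H_a$ holds in $\mathfrak{g}'$ and EC, ND are available (Theorem~\ref{388}), the preliminary computation identifies $H_a.\la F,I\ra$ with $-\la F,I\ra$ exactly when $F$ has a minimal element of color $a$ and with $+\la F,I\ra$ exactly when $I$ has a maximal element of color $a$, so the restrictions are upper and lower $P$-minuscule, which is condition (i) of Corollary~\ref{627}. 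Finally, for uniqueness: each of the four representation-theoretic conditions forces condition (i) of Corollary~\ref{627} (the implications above show this, via restriction when needed), and in \ref*{(iii)} and \ref*{(i)} the diagonal operators are pinned down by $H_a=[X_a,Y_a]$, whose action the preliminary computation determines to be that of $M_a$; so $\{M_a\}_{a\in\Gamma}$ is the unique choice throughout, by the uniqueness clause of Corollary~\ref{627}.

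The main obstacle I anticipate is not any single deep step but the careful accounting in the preliminary computation together with verifying that passing from \ref*{(iii)} to \ref*{(i)} genuinely exhausts the relations of $\mathfrak{g}'$: one must check that ``$X$-square nilpotent representation of $\mathfrak{b}'_+$'' really packages XX, HH, HX (and dually for $\mathfrak{b}'_-$), that XY(i) is supplied as a hypothesis, and that XY(ii) is precisely Lemma~\ref{645}, so that the union of relation sets for $\mathfrak{g}'$ is covered with nothing omitted. The one subtle point inside the computation is the mutual-exclusivity observation powered by ND (no equal-colored neighbors): it is exactly what keeps the eigenvalue of $[X_a,Y_a]$ inside $\{-1,0,1\}$ rather than in $\{-1,0,1,2\}$.
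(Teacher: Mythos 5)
Your proposal is correct and follows essentially the same route as the paper: the heart of both arguments is the case analysis of $[X_a,Y_a]$ acting on a basis vector $\la F,I\ra$ under EC and ND (with ND supplying the mutual exclusivity that keeps the eigenvalue in $\{-1,0,1\}$), together with Lemma \ref{645} for XY(ii), Lemma \ref{619} for square nilpotency, and Corollary \ref{627} to close the equivalence. The only difference is organizational --- you arrange the new implications as a single cycle entering and exiting Corollary \ref{627} at slightly different conditions than the paper does --- and this does not change the substance.
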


\ni It can be confirmed that the poset displayed in Figure \ref{134} satisfies Condition \ref*{(v)}.

\bp
Suppose that \ref*{(i)} holds.  Then diagonal operators $\{H_a\}_{a \in \Gamma}$ exist so that XX, YY, HX, HY, and XY hold.  The eigenvalue set of $\{H_a\}_{a \in \Gamma}$ is contained in $\{-1,0,1\}$.  By restricting to $\{X_a,H_a\}_{a \in \Gamma}$, we know $\mc{FI}(P)$ carries a representation of $\mf{b}'_+$.  By Lemma \ref{619} we know this representation is $X$-square nilpotent.  By Proposition \ref{323}, we know the representation of $\mf{b}'_-$ obtained by restricting to $\{Y_a,H_a\}_{a \in \Gamma}$ is $Y$-square nilpotent.  Since XY(i) is the relation $[X_a,Y_a] = H_a$ for all $a \in \Gamma$, we see that \ref*{(iii)} holds.

Now suppose \ref*{(iii)} holds for some such diagonal operators $\{H_a\}_{a \in \Gamma}$.  The relations XX, YY, HX, and HY are satisfied.  Since the representations of $\mf{b}'_+$ and $\mf{b}'_-$ are respectively $X$- and $Y$-square nilpotent, Proposition \ref{323} shows $P$ satisfies EC and ND.  Lemma \ref{645} and the assumed relation $[X_a,Y_a] = H_a$ for all $a \in \Gamma$ show XY holds.  Thus $\mc{FI}(P)$ carries a representation of $\mf{g}'$.  Let $b \in \Gamma$ and $(F,I) \in \mc{FI}(P)$.  By \ref*{(iii)} we have $H_b.\la F,I \ra = X_bY_b.\la F,I \ra - Y_bX_b.\la F,I \ra$.  First suppose $b$ is the color of a minimal element $y$ of $F$.  With EC in mind, we have $Y_bX_b.\la F,I \ra = Y_b.\la F-y,I+y \ra = \la F,I \ra$.  By EC and ND we see $I$ has no maximal element of color $b$, so $X_bY_b.\la F,I \ra = 0$.  Thus $H_b . \la F,I \ra = -\la F,I \ra$.  A dual argument shows that $H_b . \la F,I \ra = +\la F,I \ra$ if $b$ is the color of a maximal element of $I$.  If $b$ is neither the color of a maximal element of $I$ nor of a minimal element of $F$, then both $X_bY_b.\la F,I \ra$ and $Y_bX_b.\la F,I \ra$ vanish.  Thus $H_b.\la F,I \ra = 0$, so \ref*{(iv)} holds.  This also shows that the eigenvalue set of the operators $\{H_a\}_{a \in \Gamma}$ is contained in $\{-1,0,1\}$, so \ref*{(i)} also holds.

Now suppose \ref*{(iv)} holds for some such diagonal operators $\{H_a\}_{a \in \Gamma}$.  By Lemma \ref{619} we know the representations of $\mf{b}'_+$ and $\mf{b}'_-$ are $X$- and $Y$-square nilpotent, respectively.  By Proposition \ref{323} we know $P$ satisfies EC and ND.  Let $b \in \Gamma$ and $(F,I) \in \mc{FI}(P)$.  To show $[X_b,Y_b] = H_b$, first suppose $Y_bX_b.\la F,I \ra \ne 0$.  Then $b$ is the color of a minimal element of $F$.  With EC in mind we have $Y_bX_b.\la F,I \ra = \la F,I \ra$.  By EC and ND we see $I$ has no maximal element of color $b$, so $X_bY_b.\la F,I \ra = 0$.  Thus $[X_b,Y_b].\la F,I \ra = -\la F,I \ra = H_b.\la F,I \ra$, the last equality following from \ref*{(iv)}.  Next suppose $X_bY_b.\la F,I \ra \ne 0$.  A dual argument obtains $[X_b,Y_b].\la F,I \ra = +\la F,I \ra = H_b.\la F,I \ra$.  Finally suppose $Y_bX_b.\la F,I \ra = 0$ and $X_bY_b.\la F,I \ra = 0$.  Then $b$ is neither the color of a minimal element of $F$ nor of a maximal element of $I$.  Hence $[X_b,Y_b].\la F,I \ra = 0 = H_b.\la F,I \ra$, again using \ref*{(iv)} for the last equality.  Thus \ref*{(iii)} holds.

We have now shown the equivalence of \ref*{(iv)}, \ref*{(iii)}, and \ref*{(i)}.  Corollary \ref{627} established the equivalence of \ref*{(ii)}, \ref*{(iv)}, and \ref*{(v)}.

This proof showed that the diagonal operators $\{H_a\}_{a \in \Gamma}$ satisfying \ref*{(iii)} also satisfy the conditions of \ref*{(iv)}.  Hence the diagonal operators $\{H_a\}_{a \in \Gamma}$ for \ref*{(i)} also satisfy \ref*{(iv)}.  Corollary \ref{627} showed that the $\mu$-diagonal operators are the unique operators satisfying the conditions of \ref*{(ii)} or \ref*{(iv)}.
\ep

\section{Main results; $\boldsymbol{\Gamma}$-colored $\boldsymbol{d}$-complete and $\boldsymbol{\Gamma}$-colored minuscule posets}\label{668}

We say a locally finite poset $P$ (of any cardinality) colored with a finite simple graph $\Gamma$ is a \emph{$\Gamma$-colored $d$-complete} poset if it satisfies EC, NA, AC, I2A, and Mx1GA with respect to $\Gamma$.  Table \ref{245} explains these abbreviations and indexes the locations of their definitions.  Figure \ref{289} displayed a $\Gamma$-colored $d$-complete poset.  Any nonempty filter of the poset displayed in Figure \ref{134} is $\Gamma$-colored $d$-complete.  We say $P$ is a \emph{$\Gamma$-colored minuscule} poset if it additionally satisfies Mn1LA.  Figure \ref{134} displayed a $\Gamma$-colored minuscule poset. 
Definition \ref{MinReps} provided the definitions of upper $P$-minuscule representations of $\mf{b}'_+$ and $P$-minuscule representations of $\mf{g}'$.
The following statement summarizes our two main results, Theorems \ref{614} and \ref{653}:

\begin{Thm}\label{672} 
Let $P$ be a poset whose elements are colored by the nodes of a finite simple graph $\Gamma$.  Let $\mc{FI}(P)$ be the lattice of filter-ideal splits of $P$.
\be[(a),nosep]
\item The lattice $\mc{FI}(P)$ carries an upper $P$-minuscule representation of $\mf{b}'_+$ if and only if $P$ is a $\Gamma$-colored $d$-complete poset.
\item The lattice $\mc{FI}(P)$ carries a $P$-minuscule representation of $\mf{g}'$ if and only if $P$ is a $\Gamma$-colored minuscule poset.
\ee
\end{Thm}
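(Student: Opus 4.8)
The plan is to observe that Theorem \ref{672} merely repackages the two preceding main results in the terminology coined in Section \ref{668}; essentially no new argument is needed, since all the substance already resides in Theorems \ref{614} and \ref{653}. The only task is to match the explicit lists of coloring properties appearing in those theorems against the definitions of \emph{$\Gamma$-colored $d$-complete} and \emph{$\Gamma$-colored minuscule} poset just given at the start of this section.

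For part (a), I would start from the definition: $P$ is $\Gamma$-colored $d$-complete exactly when it satisfies EC, NA, AC, I2A, and Mx1GA with respect to $\Gamma$. This list is word-for-word the second condition in the ``upper''/$\mf{b}'_+$/Mx1GA form of Theorem \ref{614}(a), whose first condition is that $\mc{FI}(P)$ carries an upper $P$-minuscule representation of $\mf{b}'_+$. Since Theorem \ref{614}(a) asserts the equivalence of those two conditions, part (a) is immediate. For completeness one may add, using Theorem \ref{614}(b), that when these conditions hold the $\mu$-diagonal operators $\{M_a\}_{a\in\Gamma}$ of Definition \ref{Mu} may be taken to supply the coroot actions of the representation.

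For part (b), I would likewise recall that $P$ is $\Gamma$-colored minuscule exactly when it satisfies EC, NA, AC, I2A, Mx1GA, \emph{and} Mn1LA. Among the mutually equivalent conditions collected in Theorem \ref{653} are precisely the statement that $P$ satisfies this list of six properties and the statement that $\mc{FI}(P)$ carries a $P$-minuscule representation of $\mf{g}'$. Invoking that equivalence yields part (b), and again Theorem \ref{653} records that the $\mu$-diagonal operators may be used for the coroot actions.

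The closest thing to an obstacle here is purely a matter of bookkeeping: one must be careful that the property lists are transcribed faithfully — that the $d$-complete half comes from Theorem \ref{614} and the minuscule half from Theorem \ref{653}, that it is Mx1GA (not Mn1LA) that enters the definition of $\Gamma$-colored $d$-complete, and that the ``upper''/``lower'' and $\mf{b}'_+$/$\mf{b}'_-$ sides are aligned with Mx1GA/Mn1LA exactly as in Theorem \ref{614}. Beyond this verification, there is nothing left to prove.
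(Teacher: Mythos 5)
Your proposal is correct and matches the paper exactly: the paper offers no separate argument for Theorem \ref{672}, presenting it only as a summary of Theorems \ref{614} and \ref{653} after the definitions of $\Gamma$-colored $d$-complete and $\Gamma$-colored minuscule posets are introduced in Section \ref{668}. Your careful matching of the property lists (EC, NA, AC, I2A, Mx1GA for part (a); these plus Mn1LA for part (b)) is precisely the intended justification.
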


We provide two closing remarks for context.

\begin{Rmk}\label{ClassifySummary}
In \cite[Ch. 8]{Str} we classified the $\Gamma$-colored $d$-complete and $\Gamma$-colored minuscule posets.  
These results will also appear in \cite{ProStr}.
By Theorem \ref{672}, this enables us to classify all upper $P$-minuscule and $P$-minuscule representations.  
The finite and infinite connected $\Gamma$-colored minuscule posets are respectively the connected colored minuscule posets of Proctor \cite{Pro4} and the connected full heaps of Green \cite{Gre}.  The finite connected $\Gamma$-colored $d$-complete posets are the connected dominant minuscule heaps of Stembridge \cite{Ste} (the colored $d$-complete posets of Proctor \cite{Pro1} in the simply laced case).  The connected infinite $\Gamma$-colored $d$-complete posets are filters of the connected full heaps.  
Arbitrary $\Gamma$-colored $d$-complete and $\Gamma$-colored minuscule posets are direct sums of connected ones, with their Dynkin diagrams being the disjoint unions of the respective connected Dynkin diagrams coloring the posets making up these direct sums.
\end{Rmk}

\begin{Rmk}\label{GreenComparison}
Let $\mathfrak{g}'$ be a derived Kac--Moody algebra with Dynkin diagram $\Gamma$.  Let $P$ be a full heap colored by $\Gamma$.  Green defined an ideal $I$ of $P$ to be \emph{proper} if for every color $a \in \Gamma$ it satisfies $P_a \cap I \ne \emptyset$ and $P_a \cap I \ne P_a$.  Let $\mathcal{B}$ be the set of splits whose ideals are proper and let $V_{\mathcal{B}}$ be the free complex vector space on $\mathcal{B}$. 
\begin{Thm}[Theorem 4.1.6(i) of \cite{Gre}]\label{4.1.6(i)}
Let $P$ be a full heap colored by $\Gamma$.  Then $V_{\mathcal{B}}$ has the structure of a $\mathfrak{g}'$-module such that for every $a \in \Gamma$, the action of $x_a$ is given by $X_a$, the action of $y_a$ is given by $Y_a$, and the action of $h_a$ is given by the operator $H_a$ satisfying the condition in Corollary \ref{627}(ii).
\end{Thm}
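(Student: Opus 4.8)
The plan is to derive Theorem \ref{4.1.6(i)} from our Theorem \ref{653} via two steps: (1) recognizing a full heap as a $\Gamma$-colored minuscule poset, and (2) checking that the span of the proper-ideal splits is a $\mf{g}'$-submodule of $\langle \mc{FI}(P) \rangle$. For step (1): a full heap $P$ colored by $\Gamma$ is locally finite, each color is unbounded both above and below in $P$, and one verifies that $P$ satisfies EC, NA, AC, and I2A; the remaining properties Mx1GA and Mn1LA hold vacuously, since no $P_a$ has a maximal or a minimal element. Hence $P$ is a $\Gamma$-colored minuscule poset in the sense of Section \ref{668} --- this identification is carried out in detail in \cite[\S 8.4]{Str}. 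By Theorem \ref{653}, the lattice $\mc{FI}(P)$ then carries a $P$-minuscule representation of $\mf{g}'$ in which $x_a$ acts by $X_a$, $y_a$ acts by $Y_a$, and $h_a$ acts by the $\mu$-diagonal operator $M_a$; the stated equivalence with the condition in Corollary \ref{627}(ii) shows that $M_a$ meets the required eigenvalue/color condition at every split, and in particular at every split of $\mathcal{B}$.

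For step (2), I would show that $V_{\mathcal{B}} := \langle \mathcal{B} \rangle$ is a $\mf{g}'$-submodule of $\langle \mc{FI}(P) \rangle$. Each $M_a$ is diagonal in the split basis, so it preserves $V_{\mathcal{B}}$; it remains to see that each $X_a$ and each $Y_a$ maps $V_{\mathcal{B}}$ into itself. Fix $(F,I) \in \mathcal{B}$ and $a \in \Gamma$. By EC the set $P_a$ is a chain, and it is the disjoint union of the down-set $P_a \cap I$ and the up-set $P_a \cap F$; since $I$ is proper both are nonempty, and since $P_a$ is unbounded below and above both are infinite. If $X_a.\la F,I \ra \ne 0$, then (using EC) it equals $\la F-x, I+x \ra$ where $x$ is the unique element of $P_a$ minimal in $F$, necessarily the least element of $P_a \cap F$; transferring $x$ leaves $P_a \cap (I+x)$ an infinite down-set of $P_a$ with infinite complement and leaves $P_b \cap (I+x) = P_b \cap I$ unchanged for $b \ne a$, so $I+x$ is again proper. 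The case of $Y_a$ is dual, the transferred element (if any) being the greatest element of $P_a \cap I$. Hence $X_a$ and $Y_a$ carry $V_{\mathcal{B}}$ into $V_{\mathcal{B}}$.

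Finally, restricting the $\mf{g}'$-module structure on $\langle \mc{FI}(P) \rangle$ to the invariant subspace $V_{\mathcal{B}}$ gives the asserted module: the defining relations of $\mf{g}'$ pass to any invariant subspace, the operators realizing $x_a$ and $y_a$ are the restrictions of $X_a$ and $Y_a$ (which by step (2) agree with the operators defined intrinsically on $V_{\mathcal{B}}$ by the formulas of Definition \ref{RaisingLoweringOperators}), and the operator realizing $h_a$ is the restriction of $M_a$, a diagonal operator meeting the condition in Corollary \ref{627}(ii) on $\mathcal{B}$. I expect the main obstacle to be step (1): recognizing a full heap as a $\Gamma$-colored minuscule poset requires translating Green's self-dual full-heap axioms into EC, NA, AC, and I2A, which is where the genuine combinatorial content lies. (If one preferred a self-contained argument avoiding that dictionary, one could instead verify the relations XX, YY, HH, HX, HY, and XY directly for $X_a$, $Y_a$, $M_a$ on $V_{\mathcal{B}}$ using the full-heap axioms, along the lines of Sections \ref{317}--\ref{631}; then the commutator identities $[X_a,Y_a] = H_a$ and $[X_b,Y_a] = 0$ would be the essential new checks beyond the $\mf{b}'_\pm$ case, the latter being Lemma \ref{645}.)
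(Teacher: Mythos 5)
Your derivation is correct and matches the paper's own treatment: the paper does not reprove this statement (it is quoted from Green's book), but Remark \ref{GreenComparison} records exactly your reduction --- a connected full heap is an infinite $\Gamma$-colored minuscule poset (deferred, as you also do, to \cite[\S 8.4]{Str}), and $\langle \mathcal{FI}(P) \rangle = V_{\mathcal{B}} \oplus \mathbb{C}\langle P,\emptyset \rangle \oplus \mathbb{C}\langle \emptyset,P \rangle$ with the last two summands trivial one-element components, so that $V_{\mathcal{B}}$ is precisely the invariant subspace you exhibit and Theorem \ref{653} supplies the module structure with the $\mu$-diagonal operators meeting the condition of Corollary \ref{627}(ii). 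You are also right that the only substantive content not written out on the page is the translation of Green's full-heap axioms into EC, NA, AC, and I2A.
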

\ni Suppose $P$ is a full heap over a simply laced Dynkin diagram $\Gamma$.  As noted above, the heap $P$ is also a $\Gamma$-colored minuscule poset.  
Both Theorem \ref{4.1.6(i)} and the sufficient direction of Theorem \ref{672}(b) provide a representation of $\mathfrak{g}'$.  These two representations differ only slightly when $P$ is connected.
In this case, the ideals $\emptyset$ and $P$ are the non-proper ideals of $P$, and so
$\langle \mathcal{FI}(P) \rangle = V_{\mathcal{B}} \oplus \mathbb{C} \langle P,\emptyset \rangle \oplus \mathbb{C} \langle \emptyset, P \rangle$.  Here $\mathbb{C}\langle P, \emptyset \rangle$ and $\mathbb{C}\langle \emptyset, P \rangle$ are trivial representations 
corresponding to the trivial components $\{(P,\emptyset)\}$ and $\{(\emptyset,P)\}$ of $\mathcal{FI}(P)$.
\end{Rmk}

\vs

\ni \tb{Acknowledgements:} I would like to thank my thesis advisor, Robert A. Proctor, for many helpful suggestions regarding notation, terminology, and exposition.  I would also like to thank the referee for many helpful remarks and suggestions.

\vs 

\begin{spacing}{.80} 

\end{spacing}

\end{spacing}
\end{document}